\numberwithin{equation}{section} 
\numberwithin{equation}{section} 
\titleformat{\subsection}[runin]{\normalsize\bfseries}{\thesubsection}{5pt}{}
\author{ Danuzia Figueir\^edo \\ \texttt{danuzianf@hotmail.com}
\and     Mathieu Molitor      \\ \texttt{pergame.mathieu@gmail.com} 
}
\date{
\it \small{Instituto de Matem\'{a}tica, Universidade Federal da Bahia}\\
\it \small{Salvador, Brazil}
}
\title{Toric dually flat manifolds and complex space forms}
\begin{document}

\theoremstyle{definition}
\newtheorem{lemma}{Lemma}[section]
\newtheorem{definition}[lemma]{Definition}
\newtheorem{proposition}[lemma]{Proposition}
\newtheorem{corollary}[lemma]{Corollary}
\newtheorem{theorem}[lemma]{Theorem}
\newtheorem{remark}[lemma]{Remark}
\newtheorem{example}[lemma]{Example}
\bibliographystyle{alpha}

\maketitle 

%\tableofcontents

\begin{abstract}
	We classify 1-dimensional connected dually flat manifolds $M$ that are toric in the sense of \cite{molitorToric}, 
	and show that the corresponding torifications are complex space forms. 
	Special emphasis is put on the case where $M$ is an exponential family defined over a finite set. 
\end{abstract}

\section{Introduction}

    A \textit{dually flat manifold} is a triple $(M,h,\nabla)$, where $M$ is a manifold, 
	$h$ is a Riemannian metric on $M$ and $\nabla$ is a flat torsion-free connection 
	on $M$ (not necessarily the Riemannian connection) whose dual connection 
	is also torsion free and flat (see Section \ref{nkwwnkwnknknfkn}). The notion was introduced in a slightly different form
	and studied systematically by H. Shima \cite{shima3,shima} under the name \textit{Hessian manifold}, 
	as a real-analogue of K\"{a}hler manifold. In the context of Information Geometry \cite{Jost2,Murray}, 
	dually flat manifolds attracted much attention since S. Amari and H. Nagaoka \cite{Amari-Nagaoka} showed
	that every exponential family $\mathcal{E}$ equipped with the Fisher metric $h_{F}$ and exponential connection 
	$\nabla^{(e)}$ is naturally a dually flat manifold. 

	More recently, the second author investigated the relationship between toric geometry 
	and dually flat manifolds through a geometric construction that we call \textit{torification} \cite{molitorToric}.
	This construction associates to a given dually flat manifold $(M,h,\nabla)$ satisfying certain properties, 
	a K\"{a}hler manifold $N$ equipped with an effective holomorphic and isometric torus action $\Phi:\mathbb{T}^{n}\times N\to N$, 
	where $\mathbb{T}^{n}=\mathbb{R}^{n}/\mathbb{Z}^{n}$ is the $n$-dimensional real torus and $
	n=\textup{dim}_{\mathbb{R}}(M)=\textup{dim}_{\mathbb{C}}(N)$. The manifold $N$ is then called 
	the \textit{torification} of $M$. For instance, 
	the set $\mathcal{B}(n)$ of all binomial distributions 
	$p(k)=\binom{n}{k}q^{k}(1-q)^{n-k}$, $q\in (0,1)$,
	defined over $\Omega=\{0,1,...,n\}$, is a 1-dimensional exponential family whose torification is 
	the 1-dimensional complex projective space $\mathbb{P}_{1}(c)$ of holomorphic sectional curvature $c=1/n$, endowed with the 
	torus action $e^{i\theta}\cdot [z_{0},z_{1}]=[e^{i\theta}z_{0},z_{1}]$ (homogeneous coordinates). 
	The construction is motivated by the geometrization program of Quantum Mechanics pursued by the second author 
	in previous works \cite{Molitor2012, Molitor-exponential,Molitor2014,Molitor-2015,molitorToric,molitor2023geometric}.
	%The aim is to develop further Geometric Quantum Mechanics \cite{Ashtekar,Spera-geometric} 
	%using Information Geometry as a fundational ground. 
	This approach, although still in development, has already shed new light on some well-established quantum problems. 
	For example, it is proven that the mathematical description of the spin of a non-relativistic quantum particle can be entirely 
	recovered from the family of binomial distributions\footnote{To give a rough idea on how this works, 
	let us mention the following fact: the torification $N$ of an exponential family $\mathcal{E}$ 
	defined over a finite sample space $\Omega=\{x_{0},...,x_{n}\}$ comes automatically with a K\"{a}hler 
	immersion $f:N\to \mathbb{P}_{n}(1)$ \cite{molitorToric}. When $\mathcal{E}$ is the family of binomial distributions, 
	the map $f$ is the Veronese embedding, and the latter is essentially the spin coherent state map 
	associated to the group $SU(2)$ \cite{Eva}. From this, one can apply appropriate 
	geometric/analytical constructions to recover the correct mathematical description of the quantum Hilbert space. 
	For example, one can apply Odzijewicz's approach of Quantum Mechanics \cite{Anatol}.}.
	On the mathematical side, applications in toric and information geometry 
	are discussed in \cite{molitorToric,molitor2023geometric,molitor-weyl} and \cite{fujita}. 

	In this paper, we classify 1-dimensional connected dually flat manifolds $(M,h,\nabla)$ 
	that have a torification $N$ which satisfies the following conditions: (1) $M$ has a global affine coodinate system $x:M\to \mathbb{R}$, 
	(2) $N$ is \textit{regular} (see below) and (3) the space of K\"{a}hler functions on $N$ separates the points of $N$ 
	(see Section \ref{nfeknkfnekndk}). Here, the word ``regular" refers to the requirement that $N$ is connected, simply-connected, complete and 
	that the K\"{a}hler metric on $N$ is real analytic. This is an important condition because it guarantees 
	uniqueness of the torification up to equivariant K\"{a}hler isomorphisms 
	(see Proposition \ref{newdnkekfwndknk}). If a dually flat manifold 
	has a regular torification, we say that it is \textit{toric}. In this paper, we are mostly interested in 
	toric dually flat spaces and their regular torifications. 

	Let us describe our results. Let $F(c)$ denote the complete and simply connected complex space form of complex dimension 1 
	and constant holomorphic sectional curvature $c\in \mathbb{R}$. It is easy to define, 
	for each $c\in \mathbb{R}$, a toric dually flat manifold 
	$M_{c}$ whose regular torification is $F(c)$. Our main result (Theorem \ref{nefknknfeknkn}) says that 
	a 1-dimensional connected dually flat manifold $(M,h,\nabla)$ has a torification $N$ that satisfies (1), (2) and (3)
	if and only if $M\cong M_{c}$ (isomorphism of dually flat manifolds) for some $c\in \mathbb{R}$, 
	in which case $N$ is equivariantly K\"{a}hler isomorphic to $F(c)$. The proof uses the notion of 
	\textit{Hessian sectional curvature} \cite{shima} and a classification result proven in \cite{Molitor-Hessian}.

	An interesting ``quantization phenomenom" occurs when, in addition to the conditions (1), (2) and (3) above, it is assumed that 
	$M=\mathcal{E}$ is an exponential family defined over a finite sample space. In this case, we prove (Theorem \ref{nfeknkfnedkwnk}) 
	that $c=\tfrac{1}{p}$ for some integer $p\geq 1$, and that $M$ is ``essentially" a family of binomial 
	distributions defined over $\{0,1,...,p\}$. Thus, in particular, the regular torification is 
	the complex projective space $\mathbb{P}_{1}(c)$ of holomorphic sectional curvature $c=\tfrac{1}{p}$. \\

%	It is interesting to note that the quantization of $c$ is a consequence of a very natural requirement, 
%	namely that $\mathbb{P}_{1}(c)$ be of information-theoretical origin
%	(via the torification contruction). This simplicity contrasts with the usual 
%	integrability conditions\footnote{The cohomology class of the symplectic form must be integral.} 
%	found in the Mathematical Physics literature \cite{woodhouse}, and shows that 
%	the torification approach of Quantum Mechanics is worth pursuing further.\\

%	From a quantum mechanical perspective, it is interesting to note that the quantization of $c$ discussed above 
%	is a consequence of a very natural requirement, namely that $\mathbb{P}_{1}(c)$ be of information-theoretical origin
%	(via the torification contruction). This simplicity contrasts with the artificial nature of the usual 
%	integrability conditions found in the Mathematical Physics literature \cite{woodhouse} 
%	(the cohomology class of the symplectic form must be integral), and shows that 
%	the torification approach of Quantum Mechanics is worth pursuing further. \\
	
	The paper is organized as follows. In Section \ref{nknkneknknfwknk}, we give a fairly detailed discussion on 
	the concept of torification, with which not all readers may be familiar. This includes: Dombrowski's construction, 
	parallel and fundamental lattices, lifting procedure, examples from Information Geometry and Hessian sectional curvature. 
	The material is mostly taken from \cite{molitorToric,Molitor-Hessian}. 
	In Section \ref{nceknwkenfknk}, we show that every complete and simply connected complex space form of dimension 1 
	can be realized as a torification of an appropriate dually flat manifold (Proposition \ref{nekwnkendkwnkn}). 
	In Section \ref{nrfekwnkefnekn}, we prove a technical result (Proposition \ref{nfeknkefnknwk}). Our main results (Theorems 
	\ref{nefknknfeknkn} and \ref{nfeknkfnedkwnk}) are proven in Section \ref{neknkrfnekwnk}. \\

	\textbf{Notation.} The derivative of a smooth map $f:M\to M'$ between manifolds 
	is denoted by $f_{*}:TM\to TM'$. If $p\in M$, we write $f_{*_{p}}:T_{p}M\to T_{f(p)}M'$. 
	Let $G$ be a Lie group and $\Phi:G\times M\to M$ a Lie group action. Given $g\in G$, the map $\Phi_{g}$ is the diffeomorphism of $M$ 
	defined by $\Phi_{g}(p)=\Phi(g,p)$, where $p\in M$. 
	A smooth map $f:N\to N'$ between K\"{a}hler manifolds is a \textit{K\"{a}hler immersion} (resp. \textit{K\"{a}hler isomorphism}) 
	if $f$ is an isometric and holomorphic immersion (resp. isometric and holomorphic diffeomorphism).

\section{Torification of dually flat manifolds}\label{nknkneknknfwknk}
	In this section, we discuss the concept of torification, which is used throughout this paper. This concept 
	is a combination of two ingredients: (1) \textit{Dombrowski's construction}, 
	which implies that the tangent bundle of a dually flat manifold is naturally a K\"{a}hler manifold \cite{Dombrowski}, 
	and (2) \textit{parallel lattices}, which are used to implement torus actions. 
	Examples from Information Geometry are presented. The material is taken from \cite{molitorToric,Molitor-Hessian}.

\subsection{Dombrowski's construction.}\label{nkwwnkwnknknfkn}
	Let $M$	be a connected manifold of dimension $n$, endowed with a Riemannian metric $h$ 
	and affine connection $\nabla$ ($\nabla$ is not necessarily 
	the Levi-Civita connection). The \textit{dual connection} of $\nabla$, denoted by $\nabla^{*}$, is the only connection satisfying 
	$X(h(Y,Z))=h(\nabla_{X}Y,Z)+h(Y,\nabla^{*}_{X}Z)$ for all vector fields $X,Y,Z$ on $M$. 
	When both $\nabla$ and $\nabla^{*}$ are flat (i.e., the curvature tensor and torsion are zero), 
	we say that the triple $(M,h,\nabla)$ is a \textit{dually flat manifold}. 

	Let $\pi:TM\to M$ denote the canonical projection. Given a local coordinate system $(x_{1},...,x_{n})$ 
	on $U\subseteq M$, we can define a coordinate system $(q,r)=(q_{1},...,q_{n},r_{1},...,r_{n})$ 
	on $\pi^{-1}(U)\subseteq TM$ by letting $(q,r)(\sum_{j=1}^{n}a_{j}\tfrac{\partial}{\partial x_{j}}\big\vert_{p})=
	(x_{1}(p),...,x_{n}(p),a_{1},...,a_{n})$, where $p\in M$ and $a_{1},...,a_{n}\in \mathbb{R}$. 
	Write $(z_{1},...,z_{n})=(q_{1}+ir_{1},...,q_{n}+ir_{n})$, where $i=\sqrt{-1}$. When $\nabla$ is flat, Dombrowski 
	\cite{Dombrowski} showed that the family of complex coordinate systems $(z_{1},...,z_{n})$ 
	on $TM$ (obtained from affine coordinates on $M$) form a holomorphic atlas on $TM$. Thus, when $\nabla$ is flat, 
	$TM$ is naturally a complex manifold. If in addition $\nabla^{*}$ is flat, then $TM$ has a natural 
	K\"{a}hler metric $g$ whose local expression in the coordinates $(q,r)$ is given by $g(q,r)=
	\big[\begin{smallmatrix}
		h(x)  &  0\\
		0     &   h(x)
	\end{smallmatrix}
	\big]$, where $h(x)$ is the matrix representation of $h$ in the affine coordinates $x=(x_{1},...,x_{n})$. 
	It follows that the tangent bundle of a dually flat manifold is naturally a K\"{a}hler manifold. In this paper, 
	we will refer to this K\"{a}hler structure as the \textit{K\"{a}hler structure associated to Dombrowski's 
	construction}.

\subsection{Parallel lattices.}\label{nekwnkefkwnkk}

	Let $(M,h,\nabla)$ be a dually flat manifold of dimension $n$. 
	A subset $L\subset TM$ is said to be a \textit{parallel lattice} with respect to $\nabla$ if there are $n$ vector 
	fields $X_{1},...,X_{n}$ on $M$ that are parallel with respect to $\nabla$ and such that: $(i)$ $\{X_{1}(p),...,X_{n}(p)\}$ is a basis 
	for $T_{p}M$ for every $p\in M$, and $(ii)$ $L=\{k_{1}X_{1}(p)+...+k_{n}X_{n}(p)\,\,\vert\,\,k_{1},...,k_{n}\in \mathbb{Z},\,\,p\in M\}$. 
	In this case, we say that the frame $X=(X_{1},...,X_{n})$ is a 
	\textit{generator} for $L$. We will denote the set of generators for $L$ by 
	$\textup{gen}(L)$. 

	Given a parallel lattice $L\subset TM$ with respect to $\nabla$, and $X\in \textup{gen}(L)$ , 
	we will denote by $\Gamma(L)$ the set of 
	transformations of $TM$ of the form $u\mapsto u+k_{1}X_{1}+...+k_{n}X_{n}$, where 
	$u\in TM$ and $k_{1},...,k_{n}\in \mathbb{Z}$. The group
	$\Gamma(L)$ is independent of the choice of $X\in \textup{gen}(L)$ and is isomorphic to $\mathbb{Z}^{n}$. 
	Moreover, the natural action of $\Gamma(L)$ 
	on $TM$ is free and proper. Thus the quotient $M_{L}=TM/\Gamma(L)$ is a 
	smooth manifold and the quotient map $q_{L}:TM\to M_{L}$ is a covering map 
	whose Deck transformation group is $\Gamma(L)$. Since 
	$\pi\circ \gamma=\pi$ for all $\gamma\in \Gamma(L)$, 
	there exists a surjective submersion $\pi_{L}:M_{L}\to M$ such that $\pi=\pi_{L}\circ q_{L}$.

	Let $\mathbb{T}^{n}=\mathbb{R}^{n}/\mathbb{Z}^{n}$ be the $n$-dimensional torus. 
	Given $t=(t_{1},...,t_{n})\in \mathbb{R}^{n}$, we will denote 
	by $[t]=[t_{1},...,t_{n}]$ the corresponding equivalence class in $\mathbb{R}^{n}/\mathbb{Z}^{n}$. 
	Given $X\in \textup{gen}(L)$, we will denote by 
	$\Phi_{X}:\mathbb{T}^{n}\times M_{L}\to M_{L}$ the effective torus action defined by 
	\begin{eqnarray}\label{nkdnknewknek}
		\Phi_{X}([t],q_{L}(u))=q_{L}(u+t_{1}X_{1}+...+t_{n}X_{n}). 
	\end{eqnarray}

	The manifold $M_{L}=TM/\Gamma(L)$ is naturally a K\"{a}hler manifold (this follows from the fact that 
	every $\gamma\in \Gamma(L)$ is a holomorphic and isometric map with respect the K\"{a}hler structure 
	associated to Dombrowski's construction). Moreover, for each $a\in \mathbb{T}^{n}$, the map 
	$(\Phi_{X})_{a}:M_{L}\to M_{L}$, $p\mapsto \Phi_{X}(a,p)$ is a holomorphic isometry. 

\subsection{Torification.}\label{nfkenkfejdefdknkn} 
	Let $(M,h,\nabla)$ be a connected dually flat manifold of dimension $n$ and $N$ a connected K\"{a}hler 
	manifold of complex dimension $n$, equipped with an effective holomorphic and isometric torus 
	action $\Phi:\mathbb{T}^{n}\times N\to N$. Let $N^{\circ}$ denote the set of points $p\in N$ 
	where $\Phi$ is free. 

\begin{definition}[\textbf{Torification}]\label{nfeknkefenkwnk}
	We shall say that $N$ is a \textit{torification} of $M$ 
	if there exist a parallel lattice $L\subset TM$ with respect to 
	$\nabla$, $X\in \textup{gen}(L)$ and a holomorphic and isometric diffeomorphism $F:M_{L}\to N^{\circ}$
	satisfying $F\circ (\Phi_{X})_{a}=\Phi_{a}\circ F$ for all $a\in \mathbb{T}^{n}$. 
\end{definition}

	By abuse of language, we will often say that the torus action $\Phi:\mathbb{T}^{n}\times N\to N$ is a torification of $M$. 
	
	In practice, it is often convenient to work with the following equivalent definition, which is 
	expressed in terms of covering maps. 

\begin{proposition}\label{nkwdnkkfenknk}
	Let $M$ and $N$ be as defined in the beginning of this section. The following are equivalent:   
	\begin{enumerate}[(1)]
	\item $N$ is a torification of $M$. 
	\item There exist a holomorphic and isometric covering map $\tau: TM\to N^{\circ}$, a parallel lattice $L\subset TM$ 
		with respect to $\nabla$ and $X=(X_{1},...,X_{n})\in \textup{gen}(L)$ such that:
			\begin{enumerate}[(i)]
			\item $\Gamma(L)=\textup{Deck}(\tau)$ (= Deck transformation group of $\tau$).
			\item $\tau\circ T_{t}	= \Phi_{[t]}\circ \tau$
				for every $t\in \mathbb{R}^{n}$, where $T:\mathbb{R}^{n}\times TM\to TM$ is the Lie group 
				action of $\mathbb{R}^{n}$ on $TM$ given by $T(t,u)=u+t_{1}X_{1}+...+t_{n}X_{n},$
				where $t=(t_{1},...,t_{n})\in \mathbb{R}^{n}$ and $u\in TM$. 
			\end{enumerate}
	\end{enumerate}
\end{proposition}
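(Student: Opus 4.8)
The plan is to prove the two implications separately, using the relationship between the torification $F:M_L \to N^\circ$ and the covering map $TM \to N^\circ$ obtained by composing $F$ with the quotient $q_L : TM \to M_L$.

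\medskip

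\noindent\textbf{Proof of $(1)\Rightarrow(2)$.} Assume $N$ is a torification of $M$, so there are a parallel lattice $L\subset TM$, a generator $X\in\textup{gen}(L)$ and a holomorphic isometric diffeomorphism $F:M_L\to N^\circ$ with $F\circ(\Phi_X)_a=\Phi_a\circ F$ for all $a\in\mathbb{T}^n$. Set $\tau = F\circ q_L : TM\to N^\circ$. Since $q_L$ is a holomorphic isometric covering map (recall $M_L$ carries the K\"ahler structure descended from Dombrowski's construction and $q_L$ is a covering with Deck group $\Gamma(L)$) and $F$ is a holomorphic isometric diffeomorphism, $\tau$ is a holomorphic isometric covering map. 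For (i): the Deck transformations of $\tau$ are exactly the Deck transformations of $q_L$ (because $F$ is a diffeomorphism, pre- and post-composition by it sets up a bijection between $\textup{Deck}(q_L)$ and $\textup{Deck}(\tau)$, and a Deck transformation of $q_L$ is automatically one of $\tau$ since $\tau\circ\gamma = F\circ q_L\circ\gamma = F\circ q_L = \tau$); and $\textup{Deck}(q_L)=\Gamma(L)$. For (ii): unwinding the definition of $\Phi_X$ in \eqref{nkdnknewknek} and of the $\mathbb{R}^n$-action $T$, for $u\in TM$ and $t\in\mathbb{R}^n$ we have $q_L(T_t(u)) = q_L(u+t_1X_1+\cdots+t_nX_n) = \Phi_X([t],q_L(u)) = (\Phi_X)_{[t]}(q_L(u))$, hence $\tau(T_t(u)) = F(q_L(T_t u)) = F((\Phi_X)_{[t]}(q_L u)) = \Phi_{[t]}(F(q_L u)) = \Phi_{[t]}(\tau(u))$, i.e. $\tau\circ T_t = \Phi_{[t]}\circ\tau$.

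\medskip

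\noindent\textbf{Proof of $(2)\Rightarrow(1)$.} Conversely, suppose $\tau:TM\to N^\circ$, $L$ and $X$ are as in (2). Since $\textup{Deck}(\tau)=\Gamma(L)$ is precisely the group of Deck transformations of the (also holomorphic, isometric) covering map $q_L : TM\to M_L = TM/\Gamma(L)$, the covering $\tau$ factors uniquely through $q_L$: there is a holomorphic isometric diffeomorphism $F:M_L\to N^\circ$ with $\tau = F\circ q_L$. (This is the standard fact that two covering maps of a connected space with the same Deck group — equivalently, inducing the same subgroup of $\pi_1$ — are isomorphic; here $TM$ is connected since $M$ is, and $q_L$ is the quotient by $\Gamma(L)$, so $F := $ the map $q_L(u)\mapsto\tau(u)$ is well-defined, bijective, and a local holomorphic isometry because $q_L$ and $\tau$ are, hence a holomorphic isometric diffeomorphism.) It remains to check $F\circ(\Phi_X)_a = \Phi_a\circ F$ for $a\in\mathbb{T}^n$. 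Write $a=[t]$ with $t\in\mathbb{R}^n$ and take any point of $M_L$, say $q_L(u)$. Then $F((\Phi_X)_{[t]}(q_L u)) = F(q_L(T_t u)) = \tau(T_t u) = \Phi_{[t]}(\tau(u)) = \Phi_{[t]}(F(q_L u))$, using \eqref{nkdnknewknek}, the definition of $T$, and (ii). Since $q_L$ is surjective this proves the equivariance, so $N$ is a torification of $M$.

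\medskip

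\noindent\textbf{Main obstacle.} The only genuinely non-formal point is the factorization of $\tau$ through $q_L$ in $(2)\Rightarrow(1)$: one must verify that equality of Deck groups (as subgroups of the diffeomorphism group of $TM$, both equal to $\Gamma(L)$) really does force the two coverings to be isomorphic over the identity of the relevant base, and that the resulting bijection $F$ is not merely continuous but holomorphic and isometric. The first is covering-space theory (two normal coverings with the same Deck group give the same quotient up to canonical isomorphism), and the second follows because $q_L$ and $\tau$ are local holomorphic isometries, so $F = \tau\circ q_L^{-1}$ read in charts is a composition of such. Everything else is a direct unwinding of the definitions of $\Phi_X$, $T$, and Deck$(q_L)=\Gamma(L)$, all of which are recorded in Section \ref{nekwnkefkwnkk}.
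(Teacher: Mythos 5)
Your direction $(1)\Rightarrow(2)$ is correct and is the routine unwinding of Definition \ref{nfeknkefenkwnk}: setting $\tau=F\circ q_{L}$, the Deck group computation and the verification of (ii) via \eqref{nkdnknewknek} are fine. (The paper itself gives no proof of this proposition -- it is imported from \cite{molitorToric} -- so the comparison can only be with what a complete argument must contain.)

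The gap is in $(2)\Rightarrow(1)$, at exactly the point you flagged: the injectivity of the descended map $F:M_{L}\to N^{\circ}$. From $\Gamma(L)\subseteq\textup{Deck}(\tau)$ you do get a well-defined, surjective local K\"ahler isomorphism $F$ with $\tau=F\circ q_{L}$, and in fact $F$ is a covering map. But $F$ is a diffeomorphism if and only if $\Gamma(L)$ acts \emph{transitively} on each fibre of $\tau$, i.e.\ if and only if $\tau$ is a normal (regular) covering -- and that is not among the hypotheses and is not established by the equality $\textup{Deck}(\tau)=\Gamma(L)$. The ``standard fact'' you invoke is false as stated: the Deck group of a covering does not determine the covering, and ``same Deck group'' is not equivalent to ``inducing the same subgroup of $\pi_{1}$'' -- a non-normal covering can have any proper quotient, even the trivial group, as its Deck group while its fibres are strictly larger than the Deck orbits. (The equivalence you cite holds only within the class of normal coverings, which is precisely what needs to be proved here; note that $TM\cong M\times\mathbb{R}^{n}$ need not be simply connected, so $\tau$ is not automatically the universal cover.) What the hypotheses do give cheaply, and what your write-up does not isolate, is this: if $\tau(u)=\tau(v)$ with $\pi(u)=\pi(v)$, then $v=T_{s}u$ for some $s\in\mathbb{R}^{n}$, and (ii) together with freeness of $\Phi$ at $\tau(u)\in N^{\circ}$ forces $[s]=[0]$, hence $v\in\Gamma(L)\cdot u$. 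The genuinely nontrivial remaining case is $\tau(u)=\tau(v)$ with $\pi(u)\neq\pi(v)$ -- equivalently, injectivity of the induced map $M\to N^{\circ}/\mathbb{T}^{n}$ -- and ruling this out requires a real argument using the equivariance and the geometry of the situation, not covering-space generalities. As it stands, your proof of $(2)\Rightarrow(1)$ only shows that $N^{\circ}$ is K\"ahler-covered by $M_{L}$ equivariantly, which is weaker than the definition of torification.
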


	A smooth map $f:M\to M'$ between two dually flat manifolds $(M,h,\nabla)$ and $(M',h',\nabla')$ is said to be 
	an \textit{affine isometric map} if it is affine with respect to $\nabla$ and $\nabla'$, and 
	if $f^{*}h'=h$. An affine isometric map which is also a diffeomorphism is called 
	an \textit{isomorphism of dually flat maniolds}. 
	If there exists an isomorphism of dually flat manifolds between $M$ and $M'$, and if $\Phi:\mathbb{T}^{n}\times N\to N$ 
	is a torification of $M$, then it is also a torification of $M'$.  

	We now discuss uniqueness. We shall say that a K\"{a}hler manifold $N$ is \textit{regular} if it is connected, 
	simply connected, complete and if the K\"{a}hler metric is real analytic. 
	A torification $\Phi:\mathbb{T}^{n}\times N\to N$ is said to be \textit{regular} if $N$ is regular. 
	In this paper, we are mostly interested 
	in regular torifications. 
%\begin{remark}\label{ndkdnknknknk}
%	A torification $N$ is not necessarily a K\"{a}hler toric manifold. For example, $N$ may not be compact 
%	(see Proposition \ref{nfeknwknwknk}). 
%\end{remark}

	Two torifications $\Phi:\mathbb{T}^{n}\times N\to N$ and $\Phi':\mathbb{T}^{n}\times N'\to N'$ 
	of the same connected dually flat manifold $(M,h,\nabla)$ are said to be \textit{equivalent} if there exists 
	a K\"{a}hler isomorphism $f:N\to N'$ and a Lie group isomorphism $\rho:\mathbb{T}^{n}\to \mathbb{T}^{n}$ such that 
	$f\circ \Phi_{a}=\Phi'_{\rho(a)}\circ f$ for all $a\in \mathbb{T}^{n}$. 

\begin{theorem}[\textbf{Equivalence of regular torifications}]\label{newdnkekfwndknk}
	Regular torifications of a connected dually flat manifold $(M,h,\nabla)$ are equivalent. 
\end{theorem}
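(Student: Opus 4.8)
The plan is to reduce the statement to a uniqueness theorem for the ``universal covering'' of a torification. Suppose $\Phi:\mathbb{T}^{n}\times N\to N$ and $\Phi':\mathbb{T}^{n}\times N'\to N'$ are two regular torifications of $(M,h,\nabla)$. By Proposition \ref{nkwdnkkfenknk}, we obtain holomorphic isometric covering maps $\tau:TM\to N^{\circ}$ and $\tau':TM\to N'^{\circ}$, together with parallel lattices $L,L'\subset TM$ and generators $X\in\textup{gen}(L)$, $X'\in\textup{gen}(L')$, intertwining the respective torus actions with the translation actions $T,T'$ of $\mathbb{R}^{n}$ on $TM$. First I would record the key topological consequence: since $N$ and $N'$ are simply connected and complete, while $N^{\circ}\subseteq N$ (resp. $N'^{\circ}\subseteq N'$) is the free locus of the torus action, one should argue that the inclusion $N^{\circ}\hookrightarrow N$ induces an isomorphism on $\pi_1$ — i.e., $N\setminus N^{\circ}$ has real codimension at least $2$, so that $N^{\circ}$ is itself connected and simply connected. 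Granting this, $\tau:TM\to N^{\circ}$ is a covering of a simply connected space, hence a diffeomorphism onto $N^{\circ}$; likewise $\tau'$. This already forces $\Gamma(L)=\textup{Deck}(\tau)$ to be trivial, so in fact $L$ and $L'$ are ``trivial'' lattices in the sense that $\Gamma(L)=\{\textup{id}\}$ — wait, this cannot be right in general, so the real content must be that $TM$ itself is the universal cover and one instead compares via it.

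So the corrected plan is: use $TM$ as a common object and build the equivalence from the two identifications. Concretely, consider the K\"ahler isomorphism $\tau'\circ\tau^{-1}:N^{\circ}\to N'^{\circ}$ when both $\tau,\tau'$ are diffeomorphisms; more robustly, even without the codimension claim, both $\tau$ and $\tau'$ are holomorphic isometric coverings from the \emph{same} manifold $TM$, so after fixing basepoints they are related by an element of the isometry group, and one transports the torus action. I would then invoke real-analytic continuation: since the K\"ahler metrics on $N$ and $N'$ are real analytic and $N,N'$ are connected, simply connected and complete, any K\"ahler isometry defined on the open dense subsets $N^{\circ}, N'^{\circ}$ extends uniquely to a K\"ahler isomorphism $f:N\to N'$ (this is the standard analytic-continuation / Myers–Steenrod type extension argument, using completeness to ensure the extension is globally defined and bijective). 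The torus-equivariance $f\circ\Phi_a=\Phi'_{\rho(a)}\circ f$ on the dense set $N^{\circ}$ then propagates to all of $N$ by continuity, where $\rho:\mathbb{T}^n\to\mathbb{T}^n$ is the Lie group automorphism induced by the change of generators from $X$ to $X'$ (an element of $GL_n(\mathbb{Z})$ acting on $\mathbb{R}^n/\mathbb{Z}^n$).

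In more detail, the steps in order are: (1) From Proposition \ref{nkwdnkkfenknk}, extract $\tau,\tau',L,L',X,X'$. (2) Show the free locus of a holomorphic isometric torus action on a K\"ahler manifold has complement of real codimension $\geq 2$, hence $N^{\circ}$, $N'^{\circ}$ are connected and simply connected and $\pi_1(N^\circ)\to\pi_1(N)$ is onto; combined with $N,N'$ simply connected this makes $\tau,\tau'$ global diffeomorphisms and $\Gamma(L)$, $\Gamma(L')$ trivial, so $L=L'$ consists only of the zero section's... — more carefully, $\Gamma(L)=\textup{Deck}(\tau)=1$ forces $X$ to not actually generate a $\mathbb{Z}^n$ of translations acting freely and properly unless $TM$ already equals $M_L$; the resolution is that the torus action on $TM$ need not descend, and one simply keeps working on $TM$. (3) Form $g:=\tau'\circ\tau^{-1}:N^\circ\to N'^\circ$, a K\"ahler isomorphism intertwining $\Phi|_{N^\circ}$ with $\Phi'|_{N'^\circ}$ up to the automorphism $\rho\in GL_n(\mathbb Z)$ relating $X'$ to $X$ via $X'_i=\sum_j (\rho)_{ij}X_j$. (4) Extend $g$ to $f:N\to N'$ using real-analyticity and completeness. (5) Pass equivariance to the closure. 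The main obstacle I expect is Step (2)/(4): rigorously controlling the behavior of the maps near the non-free locus — proving the codimension bound for the singular set of the torus action, and then justifying the unique real-analytic extension of the K\"ahler isometry across that set and checking it remains bijective using geodesic completeness. This is where the hypotheses ``regular'' (real analytic + complete + simply connected) are essential, and it is the technical heart of the argument; the equivariance bookkeeping in Steps (3) and (5) is routine once the extension is in hand.
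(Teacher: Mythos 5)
Your high-level strategy (use regularity --- real analyticity, completeness, simple connectedness --- to extend an isometry defined near the free locus, then transport the torus action through the lattices) is the right kind of idea, but as written the argument has two genuine breakdowns. First, the topological claim in Step (2) is false: removing a set of real codimension $2$ preserves connectedness but not the fundamental group, so $N^{\circ}$ is in general \emph{not} simply connected and $\tau$ is \emph{not} a diffeomorphism. This is not a borderline case but the generic one: $\Gamma(L)\cong\mathbb{Z}^{n}$ always (Section \ref{nekwnkefkwnkk}), and already for $\mathcal{B}(n)$ one has $N=\mathbb{P}_{1}$, $N^{\circ}\cong\mathbb{C}^{*}$ with $\pi_{1}=\mathbb{Z}$ and $\tau$ an infinite covering. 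You notice the contradiction yourself, but the ``resolution'' you offer is not a repair, and the subsequent Step (3) inherits the damage: the map $\tau'\circ\tau^{-1}$ simply does not exist, and the fallback remark that two coverings out of the same manifold $TM$ are ``related by an element of the isometry group after fixing basepoints'' is not an argument. To get a well-defined map $N^{\circ}\to (N')^{\circ}$ through which to compare the two torifications you would need $\tau'$ to factor through $\tau$, i.e.\ $\mathrm{Deck}(\tau)\subseteq\mathrm{Deck}(\tau')$, equivalently $\Gamma(L)=\Gamma(L')$ --- this is precisely the uniqueness of the fundamental lattice (Section \ref{nfeknkfnekndk}), and it is the crux of the theorem; nowhere in your proposal is it established. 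The same unproven statement resurfaces in your definition of $\rho$: you assert that the change-of-basis matrix between the parallel frames $X$ and $X'$ lies in $GL_{n}(\mathbb{Z})$, but a priori it is only an invertible real matrix, and its integrality is again equivalent to $\Gamma(L)=\Gamma(L')$. So the equivariance part is assumed, not proved.

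A workable repair keeps your Step (4) but reverses the logical order of the lattice comparison. Choose a small open $U\subset TM$ on which $\tau$ and $\tau'$ are injective; then $\tau'\circ(\tau|_{U})^{-1}$ is a K\"ahler isometry between open subsets of $N$ and $N'$, and the classical extension theorem for local isometries of connected, simply connected, complete real-analytic Riemannian manifolds (Kobayashi--Nomizu, Ch.~VI) produces a global isometry $f:N\to N'$, holomorphic by analytic continuation. The identity principle for local isometries (the metric on $TM$ is analytic, being locally the pullback of the analytic metric on $N$ under the holomorphic map $\tau$) then gives $f\circ\tau=\tau'$ on all of $TM$; from bijectivity of $f$ one \emph{deduces} $\mathrm{Deck}(\tau)=\mathrm{Deck}(\tau')$, hence $\Gamma(L)=\Gamma(L')$, hence integrality of the transition matrix $A$, and only then does $\rho([t]):=[At]$ make sense and equivariance follow on $N^{\circ}$ and, by density and continuity, on $N$. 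In short: the extension should produce the lattice equality, rather than the (unproved) lattice equality producing the map to be extended.
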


	A connected dually flat manifold $(M,h,\nabla)$ is said to be \textit{toric} if it has a regular torification $N$. 
	In this case, we will often refer to $N$ as ``the regular torification of $M$", and keep in mind that it is 
	only defined up to an equivariant K\"{a}hler isomorphism and reparametrization of the torus.

\subsection{Fundamental lattices and K\"{a}hler functions.}\label{nfeknkfnekndk}

	If a connected dually flat manifold $(M,h,\nabla)$ is toric, then by definition there are a regular torification $N$, a 
	parallel lattice $L\subset TM$, a generator $X\in \textup{gen}(L)$ and a K\"{a}hler isomorphism $F:M_{L}\to N^{\circ}$ 
	that is equivariant in the sense of Definition \ref{nfeknkefenkwnk}. The quadruple $(L,X,F,N)$ is not unique in general, 
	but the parallel lattice $L$ is; we call it the \textit{fundamental lattice} of $(M,h,\nabla)$ (\cite{molitorToric}, Lemma 10.1 and 
	Definition 10.2).

	The fundamental lattice is closely related to the notion of \textit{K\"{a}hler function}, that we now recall.

\begin{definition}\label{fdkjfekgjrkgj}
	Let $N$ be a K\"{a}hler manifold with K\"{a}hler structure $(g,J,\omega).$
	A smooth function $f:N\rightarrow \mathbb{R}$ is said to be \textit{K\"{a}hler} if it satisfies $\mathscr{L}_{X_{f}}g=0$,
	where $X_{f}$ is the Hamiltonian vector field associated to $f$ 
	(i.e.\ $\omega(X_{f},\,.\,)=df(.)$) and where $\mathscr{L}_{X_{f}}$ is the Lie derivative in the direction of $X_{f}.$
\end{definition}

	Following \cite{Cirelli-Quantum}, we denote by $\mathscr{K}(N)$ the space of K\"{a}hler functions on $N.$ 
	When $N$ is connected, the dimension of $\mathscr{K}(N)$ is at most $\tfrac{1}{2}n(n+1)+1,$ where 
	$n=\textup{dim}_{\mathbb{R}}(N)$. This follows from the fact that the Hamiltonian vector field of a K\"{a}hler function 
	is a Killing vector field, and the space of Killing vector fields on a connected Riemannian manifold $M$ is a most 
	$\tfrac{1}{2}n(n+1)$, where $n=\textup{dim}(M)$ (see \cite{Kobayashi-Nomizu}, Chapter VI, Theorem 3.3).
	If $N$ has a finite number of connected components, then $\mathscr{K}(N)$ is a finite dimensional Lie algebra 
	for the Poisson bracket $\{f,g\}:=\omega(X_{f},X_{g}).$ 
	
	We say that $\mathscr{K}(N)$ \textit{separates the points of $N$} 
	if $f(p)=f(q)$ for all $f\in \mathscr{K}(N)$ implies $p=q$. As a matter of notation, 
	we use $\textup{Diff}(M)$ to denote the group of diffeomorphisms of a manifold $M$.

\begin{proposition}[\cite{molitorToric}, Proposition 10.4]\label{nfeknkwneknk}
	Let $\Phi:\mathbb{T}^{n}\times N\to N$ be a regular torification of the dually flat manifold $(M,h,\nabla)$, with fundamental lattice 
	$\mathscr{L}\subset TM$. Then 
	\begin{eqnarray}\label{njenfnenkfkenk}
		\Gamma(\mathscr{L})\subseteq \big\{\varphi\in \textup{Diff}(TM)\,\,\big\vert\,\, 
		f\circ \varphi=f\,\,\forall\,f\in \mathscr{K}(TM)\big\}. 
	\end{eqnarray}
	When $\mathscr{K}(N)$ separates the points of $N$, \eqref{njenfnenkfkenk} is an equality. (See Section \ref{nekwnkefkwnkk} 
	for the definition of $\Gamma(\mathscr{L})$.)
\end{proposition}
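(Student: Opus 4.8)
The plan is to establish the two assertions of Proposition~\ref{nfeknkwneknk} separately: first the inclusion~\eqref{njenfnenkfkenk}, which holds for any regular torification, and then equality under the separation hypothesis.

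For the inclusion, let $\gamma\in\Gamma(\mathscr{L})$ and $f\in\mathscr{K}(TM)$, where $TM$ carries the K\"ahler structure from Dombrowski's construction. By Proposition~\ref{nkwdnkkfenknk} there is a holomorphic isometric covering map $\tau:TM\to N^{\circ}$ with $\Gamma(\mathscr{L})=\textup{Deck}(\tau)$, intertwining the $\mathbb{R}^{n}$-translation action $T$ on $TM$ with the torus action $\Phi$ on $N$. Since $N^{\circ}$ is open in $N$ and $\tau$ is a local isometry, I would argue that pulling back by $\tau$ identifies $\mathscr{K}(N)$ with a subspace of $\mathscr{K}(TM)$ consisting of $\Gamma(\mathscr{L})$-invariant functions: a K\"ahler function $g$ on $N$ restricts to a K\"ahler function on $N^{\circ}$ (the defining condition $\mathscr{L}_{X_{g}}g_{N}=0$ is local and the Hamiltonian/Killing property is preserved by an open K\"ahler embedding), and $g\circ\tau$ is then K\"ahler on $TM$ and automatically Deck-invariant. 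The genuine content is the converse direction of~\eqref{njenfnenkfkenk}'s left-hand side, namely that every element of $\Gamma(\mathscr{L})$ fixes \emph{all} of $\mathscr{K}(TM)$, not merely the ones descending from $N$. Here I would invoke the structure of the fundamental lattice: by \cite{molitorToric} the lattice $\mathscr{L}$ is characterized precisely so that its translations lie in the kernel of the infinitesimal action of $\mathscr{K}(TM)$; concretely, one shows that the Hamiltonian vector field $X_{f}$ of any K\"ahler function on $TM$ is complete and its flow commutes with the translations generating $\Gamma(\mathscr{L})$ (both being holomorphic isometries whose time-one-type relations are controlled by the parallel frame), so $f\circ\gamma=f$. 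This is where I expect to lean most heavily on the cited Lemma~10.1 and Definition~10.2, reconstructing the argument that the fundamental lattice is exactly the ``period lattice'' of the Hamiltonian flows of K\"ahler functions.

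For the equality statement, assume $\mathscr{K}(N)$ separates points of $N$. Suppose $\varphi\in\textup{Diff}(TM)$ satisfies $f\circ\varphi=f$ for all $f\in\mathscr{K}(TM)$; I must show $\varphi\in\Gamma(\mathscr{L})$. The strategy is: restrict attention to functions of the form $g\circ\tau$ with $g\in\mathscr{K}(N)$, which all lie in $\mathscr{K}(TM)$ by the first part. Then for $u\in TM$ and every $g\in\mathscr{K}(N)$ one gets $g(\tau(\varphi(u)))=g(\tau(u))$, and since $\mathscr{K}(N)$ separates points of $N$ this forces $\tau(\varphi(u))=\tau(u)$ for all $u$, i.e.\ $\varphi$ is a fibre-preserving map for the covering $\tau$. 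A self-map of the total space covering the identity on the base of a covering is, by the standard lifting/uniqueness property of covering maps (using connectedness of $TM$), precisely a Deck transformation. Hence $\varphi\in\textup{Deck}(\tau)=\Gamma(\mathscr{L})$, giving the reverse inclusion and therefore equality.

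The main obstacle is the first part's claim that \emph{all} K\"ahler functions on $TM$ are $\Gamma(\mathscr{L})$-invariant, i.e.\ that the fundamental lattice is large enough. One subtlety is that $\mathscr{K}(TM)$ could \emph{a priori} be strictly larger than $\tau^{*}\mathscr{K}(N)$ — a K\"ahler function on the cover need not descend — so invariance under $\Gamma(\mathscr{L})$ of such ``extra'' functions is not formal and genuinely uses that $\mathscr{L}$ is the \emph{fundamental} lattice (the maximal one compatible with the torification), as opposed to an arbitrary parallel lattice. I would handle this by citing \cite{molitorToric} for the identification of $\mathscr{L}$ with the common period lattice of the flows of elements of $\mathscr{K}(TM)$, and spelling out that a generator $X\in\textup{gen}(\mathscr{L})$ translates by integer combinations that act trivially on each $f\in\mathscr{K}(TM)$ because the corresponding Hamiltonian flow is $2\pi$-periodic (or returns $f$ to itself) along those directions. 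The rest — pullback of K\"ahler functions along the open K\"ahler embedding $\tau$, and the covering-space characterization of Deck transformations — is routine.
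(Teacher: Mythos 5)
This proposition is not proved in the paper at all: it is imported verbatim from \cite{molitorToric} (Proposition 10.4), so your attempt can only be judged on its own merits, not against an internal argument. Your second half is essentially fine: the K\"ahler condition is local, so restricting $g\in\mathscr{K}(N)$ to the open set $N^{\circ}$ and pulling back along the local K\"ahler isomorphism $\tau$ gives elements of $\mathscr{K}(TM)$; the separation hypothesis then forces $\tau\circ\varphi=\tau$, and a diffeomorphism of $TM$ commuting with the normal covering $\tau$ is a deck transformation, hence lies in $\textup{Deck}(\tau)=\Gamma(\mathscr{L})$ by Proposition \ref{nkwdnkkfenknk}.

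The genuine gap is in the inclusion \eqref{njenfnenkfkenk}. You must show that \emph{every} $f\in\mathscr{K}(TM)$, not only those of the form $g\circ\tau$, is $\Gamma(\mathscr{L})$-invariant, and you do this by asserting that the fundamental lattice is ``the common period lattice of the Hamiltonian flows of elements of $\mathscr{K}(TM)$''. That is not the definition in force here: $\mathscr{L}$ is defined (Lemma 10.1 and Definition 10.2 of \cite{molitorToric}, as recalled in Section \ref{nfeknkfnekndk}) as the unique parallel lattice occurring in the torification data, and the relation between $\mathscr{L}$ and $\mathscr{K}(TM)$ is precisely what the proposition asserts; invoking that characterization is circular. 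Your fallback mechanism is also insufficient: completeness of $X_{f}$ and commutation of its flow with the lattice translations would not give $f\circ\gamma=f$ even if established (on $\mathbb{C}$ with $f(x+iy)=x$, the Hamiltonian flow is translation in $y$, which commutes with translation in $x$, yet $f$ is not invariant). What is actually needed, and what the regularity of $N$ (connected, simply connected, complete, real-analytic metric) is there for, is an argument of the following type: push the Hamiltonian Killing field $X_{f}$ down to a local Killing field on an evenly covered chart of $N^{\circ}$, extend it to a global Killing field $Y$ on $N$ (Nomizu-type extension, using completeness, simple connectedness and analyticity), check by analyticity that $Y$ preserves $J$ and hence $\omega$, use simple connectedness again to write $Y=X_{F}$ for some $F\in\mathscr{K}(N)$, and conclude by uniqueness of Killing fields that $X_{f}=\tau^{*}Y$, so $f=F\circ\tau+\textup{const}$ and is therefore $\Gamma(\mathscr{L})$-invariant. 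Note in particular that even granting $\gamma_{*}X_{f}=X_{f}$ one only gets $f\circ\gamma=f+\textup{const}$, and your sketch contains no step that kills this constant. As written, the first inclusion is not established beyond an appeal to the very statement being proved.
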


	An immediate consequence is the following 

\begin{corollary}\label{nekdnwkwndknk}
	Let $(M,h,\nabla)$ be a connected dually flat manifold. If $\mathscr{K}(TM)$ separates the points of $TM$, 
	then $M$ is not toric. 
\end{corollary}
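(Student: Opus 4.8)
The plan is to argue by contraposition, deriving a contradiction from the assumption that $M$ is toric while $\mathscr{K}(TM)$ separates the points of $TM$. Suppose $(M,h,\nabla)$ is toric and let $\Phi:\mathbb{T}^{n}\times N\to N$ be a regular torification of $M$, with fundamental lattice $\mathscr{L}\subset TM$. The key point is that, by Definition \ref{nfeknkefenkwnk} and the discussion in Section \ref{nekwnkefkwnkk}, a torification comes with a nontrivial parallel lattice: there is a generator $X=(X_{1},\dots,X_{n})\in\textup{gen}(\mathscr{L})$, and the associated group $\Gamma(\mathscr{L})$ of translations $u\mapsto u+k_{1}X_{1}+\dots+k_{n}X_{n}$ is isomorphic to $\mathbb{Z}^{n}$, hence contains elements other than the identity (this uses $n=\dim M\geq 1$).

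First I would invoke Proposition \ref{nfeknkwneknk}: since $\mathscr{K}(N)$ separates the points of $N$, the inclusion \eqref{njenfnenkfkenk} is an equality, so
\[
\Gamma(\mathscr{L})=\big\{\varphi\in\textup{Diff}(TM)\,\big\vert\,f\circ\varphi=f\ \forall f\in\mathscr{K}(TM)\big\}.
\]
Next I would observe that the hypothesis ``$\mathscr{K}(TM)$ separates the points of $TM$'' forces the right-hand side to be trivial: if $\varphi\in\textup{Diff}(TM)$ satisfies $f\circ\varphi=f$ for all $f\in\mathscr{K}(TM)$, then for every $u\in TM$ and every $f\in\mathscr{K}(TM)$ we have $f(\varphi(u))=f(u)$, and separation gives $\varphi(u)=u$, i.e. $\varphi=\mathrm{id}_{TM}$. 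Therefore $\Gamma(\mathscr{L})=\{\mathrm{id}_{TM}\}$.

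This contradicts the fact that $\Gamma(\mathscr{L})\cong\mathbb{Z}^{n}$ is nontrivial (here I use that a torification of an $n$-dimensional dually flat manifold has $n\geq 1$, so $\mathbb{Z}^{n}$ has nontrivial elements; concretely the translation $u\mapsto u+X_{1}$ is not the identity since $\{X_{1}(p),\dots,X_{n}(p)\}$ is a basis of $T_pM$, in particular $X_1(p)\neq 0$). Hence no regular torification of $M$ can exist, i.e. $M$ is not toric, which is the desired conclusion. I do not expect any genuine obstacle here; the only point requiring a moment's care is making explicit that the fundamental lattice of a torification is genuinely nontrivial, but this is immediate from its definition via a generator $X\in\textup{gen}(\mathscr{L})$ together with condition $(i)$ in the definition of a parallel lattice in Section \ref{nekwnkefkwnkk}.
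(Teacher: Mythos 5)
Your argument is essentially the paper's: the corollary is stated there as an immediate consequence of Proposition \ref{nfeknkwneknk}, exactly via the observation that separation of $\mathscr{K}(TM)$ forces the group on the right-hand side of \eqref{njenfnenkfkenk} to be trivial, contradicting $\Gamma(\mathscr{L})\cong\mathbb{Z}^{n}$. One caveat: your first step invokes the \emph{equality} case of Proposition \ref{nfeknkwneknk} on the grounds that ``$\mathscr{K}(N)$ separates the points of $N$'', but that is not among the hypotheses of the corollary (the hypothesis concerns $\mathscr{K}(TM)$ and $TM$, not $N$), so this appeal is unjustified. Fortunately it is also unnecessary: the unconditional inclusion \eqref{njenfnenkfkenk} already gives $\Gamma(\mathscr{L})\subseteq\{\varphi\,\vert\,f\circ\varphi=f\ \forall f\in\mathscr{K}(TM)\}$, and your separation argument shows this set is $\{\mathrm{id}_{TM}\}$, which yields the same contradiction; simply delete the equality claim and the proof is correct as the paper intends.
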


	For later reference, we give the following technical result, whose proof can be found in \cite{Molitor2014}. 

\begin{proposition}\label{c kdlfldfkdl}
	Let $(M,h,\nabla)$ be a dually flat manifold and let $(g,J,\omega)$ be the K\"{a}hler structure 
	on $TM$ associated to $(h,\nabla)$ via Dombrowski's construction. Let $(x_{1},...,x_{n})$ be an affine coordinate system 
	with respect to $\nabla$ on $U\subseteq M$, and let $(q,r)=(q_{1},...,q_{n},r_{1},...,r_{n})$ denote the corresponding 
	coordinate system on $\pi^{-1}(U)\subseteq TM$ (see Section \ref{nkwwnkwnknknfkn}). 
	Given a smooth function $f\,:\,TM \rightarrow \mathbb{R}$, 
	we have the following equivalence: $f$ is K\"{a}hler on $\pi^{-1}(U)$ if and only if 
	\begin{eqnarray}\label{efkekfjkdf}
		\left \lbrace 
		\begin{array}{ccc}\label{dewngkkrtjkkre}
			\dfrac{\partial^{2}f}{\partial q_{i}\partial q_{j}}-
			\dfrac{\partial^{2}f}{\partial r_{i}\partial r_{j}} &=& 
			2\displaystyle\sum_{k=1}^{n}\Gamma_{ij}^{k}\circ \pi\,\dfrac{\partial f}{\partial q_{k}},\\
			\dfrac{\partial^{2}f}{\partial q_{i}\partial r_{j}}+
			\dfrac{\partial^{2}f}{\partial q_{j}\partial r_{i}} &=& 
			2\displaystyle\sum_{k=1}^{n}\Gamma_{ij}^{k}\circ \pi\,\dfrac{\partial f}{\partial r_{k}},
		\end{array}
		\right.
	\end{eqnarray}
	for all $i,j=1,...,n,$ where $\Gamma_{ij}^{k}$ are the Christoffel symbols of $h$ in the coordinates $(x_{1},...,x_{n})$. 
\end{proposition}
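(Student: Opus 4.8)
The plan is to verify the claimed equivalence by directly comparing the condition $\mathscr{L}_{X_f}g = 0$ (together with the Hamiltonian equation $\omega(X_f,\cdot) = df$) against the system \eqref{efkekfjkdf} in the explicit coordinates $(q,r)$ coming from an affine chart on $M$. First I would record the coordinate expressions furnished by Dombrowski's construction: in $\pi^{-1}(U)$ the metric is $g = \sum_k h_{k\ell}\circ\pi\,(dq_k\otimes dq_\ell + dr_k\otimes dr_\ell)$, the complex structure sends $\partial/\partial q_k \mapsto \partial/\partial r_k$ and $\partial/\partial r_k \mapsto -\partial/\partial q_k$, and the symplectic form is $\omega = \sum_{k,\ell} h_{k\ell}\circ\pi\, dq_k\wedge dr_\ell$. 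I would also note that the only nontrivial partial derivatives of the components of $g$ are $\partial(h_{k\ell}\circ\pi)/\partial q_m = (\partial h_{k\ell}/\partial x_m)\circ\pi$, since the $h_{k\ell}\circ\pi$ do not depend on the fibre coordinates $r$.

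Next I would solve the Hamiltonian equation for $X_f$ in these coordinates. Writing $X_f = \sum_k A_k \,\partial/\partial q_k + \sum_k B_k\,\partial/\partial r_k$, the equation $\omega(X_f,\cdot)=df$ becomes $\sum_k h_{k\ell}\circ\pi\,A_k = \partial f/\partial r_\ell$ and $-\sum_k h_{\ell k}\circ\pi\, B_k = \partial f/\partial q_\ell$, i.e. $A = h^{-1}\,\partial_r f$ and $B = -h^{-1}\,\partial_q f$ (all evaluated via $\pi$). Then I would compute the Killing equation $\mathscr{L}_{X_f} g = 0$, which in coordinates reads, for each pair of coordinate vector fields $\partial_\mu,\partial_\nu$ (where $\mu,\nu$ range over all $q$'s and $r$'s),
\begin{equation*}
	X_f\big(g(\partial_\mu,\partial_\nu)\big) - g\big([X_f,\partial_\mu],\partial_\nu\big) - g\big(\partial_\mu,[X_f,\partial_\nu]\big) = 0.
\end{equation*}
Splitting into the three cases $(\mu,\nu)=(q_i,q_j)$, $(r_i,r_j)$, $(q_i,r_j)$ and expanding, the terms involving $X_f(h_{ij}\circ\pi)$ produce first derivatives of $h$, and after multiplying through by $h$ to clear the inverse matrices and using the standard identity $\Gamma_{ij}^k = \tfrac12\sum_\ell h^{k\ell}(\partial_i h_{j\ell} + \partial_j h_{i\ell} - \partial_\ell h_{ij})$ for the Levi-Civita symbols, I expect the $q_iq_j$ and $r_ir_j$ equations to combine into the first line of \eqref{efkekfjkdf} and the $q_ir_j$ equations into the second line. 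Symmetrization in $i,j$ is what turns the raw Killing equations into exactly the symmetric combinations $\partial^2 f/\partial q_i\partial q_j - \partial^2 f/\partial r_i\partial r_j$ and $\partial^2 f/\partial q_i\partial r_j + \partial^2 f/\partial q_j\partial r_i$ that appear on the left-hand sides.

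The main obstacle is purely bookkeeping: correctly tracking the first-derivative-of-$h$ terms that arise both from $X_f$ differentiating the metric components and from the Christoffel symbols hidden in $[X_f,\partial_\mu]$, and checking that they reorganize precisely into $\sum_k \Gamma_{ij}^k\circ\pi\,\partial f/\partial q_k$ (resp. $\partial f/\partial r_k$) with the correct factor of $2$. There are no conceptual difficulties — $h_{k\ell}\circ\pi$ being independent of $r$ kills many potential terms — but the symmetrization and index manipulation must be done carefully. Since this computation is carried out in full in \cite{Molitor2014}, I would present the setup, indicate the three-case split, and refer there for the remaining routine algebra rather than reproducing every line.
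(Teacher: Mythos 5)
Your plan is the standard direct verification in the $(q,r)$ coordinates and it coincides with how the result is actually proved: the paper itself offers no in-text argument and simply cites \cite{Molitor2014}, where exactly this computation (solve $\omega(X_{f},\cdot)=df$ for $X_{f}$, then expand $\mathscr{L}_{X_{f}}g=0$ componentwise and use the Hessian identity $\Gamma_{ij}^{k}=\tfrac12\sum_{\ell}h^{k\ell}\partial_{i}h_{j\ell}$, valid since $\partial_{m}h_{ij}$ is totally symmetric in affine coordinates) is carried out. One bookkeeping correction: the pairing is the reverse of what you predicted — the $(q_{i},q_{j})$ and $(r_{i},r_{j})$ components of the Killing equation produce the second line of \eqref{efkekfjkdf} (the mixed-derivative one), while the mixed $(q_{i},r_{j})$ components produce the first line; this does not affect the validity of the argument, since the equivalence is between the two full systems.
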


\subsection{Lifting procedure.}\label{ncekndkrnkenknk} In this section, we discuss briefly a way of lifting affine 
	isometric maps between toric dually flat manifolds to K\"{a}hler immersions 
	between the corresponding torifications (for the proofs, see \cite{molitorToric}, Section 9). 

	Let $(M,h,\nabla)$ be a connected dually flat manifold that has a torification $\Phi:\mathbb{T}^{n}\times N\to N$. 
	By definition, there are a parallel lattice $L\subset TM$, generated by a frame $X$, and an equivariant 
	K\"{a}hler isomorphism $F:TM/\Gamma(L)\to N^{\circ}$, where $\Gamma(L)$ is the group of translations associated to $L$ and 
	$N^{\circ}$ is the set of points $p\in N$ where the torus action $\Phi$ is free 
	(see Section \ref{nfkenkfejdefdknkn}). 

	Let $\pi:TM\to M$ be the canonical projection, and let $q_{L}:TM\to TM/\Gamma(L)$ be the quotient map. 
	The fact that $\pi\circ \gamma=\pi$ for all $\gamma\in \Gamma(L)$ implies 
	that there exists a surjective submersion $\pi_{L}:TM/\Gamma(L)\to M$ such that $\pi=\pi_{L}\circ q_{L}$.
	By inserting $F$, we obtain a factorization $\pi=\kappa\circ \tau$, where 
	$\kappa=\pi_{L}\circ F^{-1}$ and $\tau=F\circ q_{L}$. We say that the formula $\pi=\kappa\circ \tau$ is a 
	\textit{toric factorization}, and call $\tau:TM\to N^{\circ}$ (resp. $\kappa:N^{\circ}\to M$) a \textit{compatible covering map}
	(resp. \textit{compatible projection}). It can be shown that $\tau$ is a covering map whose Deck transformation group is 
	$\Gamma(L)$, and that $\kappa$ is a principal $\mathbb{T}^{n}$-bundle over $M$. 

\begin{proposition}\label{ndcknkdnfknnkk}
	Let $(M,h,\nabla)$ and $(M',h',\nabla')$ be two connected dually flat manifolds with 
	torifications $\Phi:\mathbb{T}^{n}\times N\to N$ and $\Phi':\mathbb{T}^{m}\times N'\to N'$, 
	respectively. Let $\pi=\kappa\circ \tau:TM\to M$ and $\pi'=\kappa'\circ \tau':TM'\to M'$ 
	be toric factorizations. If $N$ and $N'$ are regular, then for every affine isometric map $f:M\to M'$, 
	there is a unique K\"ahler immersion $m:N\to N'$ that makes the following diagram 
	commutative:
\begin{eqnarray}\label{knkdfnknkndknk}
	\begin{tikzcd}
		TM \arrow{rrr}{\displaystyle f_{*}} \arrow[dd,swap,"\displaystyle\pi"] \arrow{rd}{\displaystyle \tau} &    &   & 
			TM' \arrow[swap]{ld}{\displaystyle \tau'} \arrow[dd,"\displaystyle\pi'"]  \\
		& N^{\circ} \arrow{r}{\displaystyle m} \arrow{ld}{\displaystyle \kappa}  
			& (N')^{\circ} \arrow[swap]{dr}{\displaystyle \kappa'}  & \\
		M \arrow[swap]{rrr}{\displaystyle f}  &     && M'
	\end{tikzcd}
\end{eqnarray}
	Moreover, $m$ is equivariant in the sense that there is a Lie group homomorphism $\rho:\mathbb{T}^{n}\to \mathbb{T}^{m}$, 
	with finite kernel, such that $m\circ \Phi_{a}=\Phi'_{\rho(a)}\circ m$ for all $a\in \mathbb{T}^{n}$. 
\end{proposition}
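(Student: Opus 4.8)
The plan is to lift $f$ in two stages: first to a K\"ahler immersion $m^{\circ}:N^{\circ}\to(N')^{\circ}$ between the free loci, then to extend $m^{\circ}$ across the (thin) non-free loci using the regularity hypotheses. I fix generators $X=(X_{1},\dots,X_{n})\in\textup{gen}(L)$ and $X'=(X'_{1},\dots,X'_{m})\in\textup{gen}(L')$ coming from the two toric factorizations, and I write $T_{t}$, $T'_{t}$ for the linear $\mathbb{R}^{n}$- and $\mathbb{R}^{m}$-actions on $TM$, $TM'$ of Proposition \ref{nkwdnkkfenknk}.

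\textbf{Step 1 (the map on tangent bundles).} First I would record that $f_{*}:TM\to TM'$ is a K\"ahler immersion for the Dombrowski structures: in $\nabla$- and $\nabla'$-affine charts $f$ is complex-affine, so $f_{*}$ is holomorphic in the coordinates $(z_{1},\dots,z_{n})$, $(z'_{1},\dots,z'_{m})$ of Section \ref{nkwwnkwnknknfkn}; the identity $f^{*}h'=h$ and the block-diagonal form of the Dombrowski metrics make $f_{*}$ isometric; and positive-definiteness of $h$ makes $f_{*}$ an immersion. Since the $X_{i}$ are $\nabla$-parallel, $f$ is $\nabla'$-affine, and the $X'_{j}$ form a $\nabla'$-parallel frame, there is a constant real $m\times n$ matrix $C$, of rank $n$, with $f_{*}X_{i}=\sum_{j}C_{ji}X'_{j}$ along $f$; equivalently $f_{*}\circ T_{t}=T'_{Ct}\circ f_{*}$. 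Composing with $\tau'$ and using part (ii) of Proposition \ref{nkwdnkkfenknk}, the holomorphic isometric immersion $G:=\tau'\circ f_{*}:TM\to(N')^{\circ}\subseteq N'$ satisfies $G\circ\gamma=\Phi'_{[Ck]}\circ G$ for every $\gamma\in\Gamma(L)$ acting as translation by $k\in\mathbb{Z}^{n}$ in the generator $X$.

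\textbf{Step 2 (integrality of $C$: the main obstacle).} The crux is to show that $C$ has integer entries, equivalently $f_{*}(L)\subseteq L'$, equivalently that $G$ is $\Gamma(L)$-invariant: the last equivalence holds because $\Gamma(L)$-invariance of $G$ says $\Phi'_{[Ck]}=\textup{id}$ on the nonempty set $\textup{Im}(G)\subseteq(N')^{\circ}$ for every $k\in\mathbb{Z}^{n}$, and on the free locus of $\Phi'$ this forces $Ck\in\mathbb{Z}^{m}$. The leverage is regularity: by Theorem \ref{newdnkekfwndknk} the lattices $L$, $L'$ are the intrinsic fundamental lattices $\mathscr{L}$, $\mathscr{L}'$. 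When $\dim M=\dim M'$, $f_{*}$ is a biholomorphic isometry, so $\tau'\circ f_{*}$ is a holomorphic isometric covering map onto $(N')^{\circ}$ with deck group $\Gamma\big(f_{*}^{-1}(\mathscr{L}')\big)$; by Proposition \ref{nkwdnkkfenknk} this data exhibits the regular K\"ahler manifold $N'$, together with $\Phi'$, as a regular torification of $(M,h,\nabla)$, and uniqueness of the fundamental lattice then gives $f_{*}^{-1}(\mathscr{L}')=\mathscr{L}$, i.e.\ $f_{*}(\mathscr{L})=\mathscr{L}'$. In the general case $\dim M\le\dim M'$ I would reduce to this one by replacing $M'$ with the autoparallel dually flat submanifold $f(M)\subseteq M'$ — which carries the dually flat structure induced by $(h',\nabla')$, is isomorphic via $f$ to $(M,h,\nabla)$, and whose Dombrowski K\"ahler structure is the one induced from $TM'$ — and then invoking the compatibility of the fundamental lattice with such submanifolds, so that $\mathscr{L}'$ restricts to $\mathscr{L}$ on $f(M)$. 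Making this non-equidimensional step fully rigorous is where I expect the real work to lie.

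\textbf{Step 3 (descent, extension, uniqueness).} Granting that $C$ is integral, $G=\tau'\circ f_{*}$ is constant on the orbits of the free proper action $\Gamma(L)$, hence descends to a unique smooth $m^{\circ}:N^{\circ}\to(N')^{\circ}$ with $m^{\circ}\circ\tau=\tau'\circ f_{*}$; it is holomorphic, isometric and an immersion because $\tau$ is a local K\"ahler isomorphism and $G$ has these properties. A diagram chase gives $\kappa'\circ m^{\circ}=f\circ\kappa$, and Step 1 yields $m^{\circ}\circ(\Phi_{X})_{a}=(\Phi'_{X'})_{\rho(a)}\circ m^{\circ}$ with $\rho:\mathbb{T}^{n}\to\mathbb{T}^{m}$, $[t]\mapsto[Ct]$, a homomorphism whose kernel is finite because $C$ is integral of rank $n$. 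To conclude, I would extend $m^{\circ}$ to $m:N\to N'$: the non-free locus $N\setminus N^{\circ}$ is a locally finite union of totally geodesic complex submanifolds, hence has real codimension $\ge 2$, so $N^{\circ}$ is open and dense in $N$ and its intrinsic distance agrees with $d_{N}|_{N^{\circ}}$; since $m^{\circ}$ is $1$-Lipschitz for these distances and $N'$ is complete, $m^{\circ}$ extends uniquely to a continuous $m:N\to N'$, and a removable-singularity argument — using completeness of $N'$, real-analyticity of the K\"ahler metrics, and $\Phi$-equivariance near the non-free locus — shows that $m$ is a real-analytic K\"ahler immersion (holomorphy and the isometry property already hold on the dense set $N^{\circ}$). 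Equivariance of $m$ then follows by continuity, and uniqueness is immediate since any two K\"ahler immersions making the diagram commute agree on $\tau(TM)=N^{\circ}$, which is dense in $N$.
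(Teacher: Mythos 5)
Your Steps 1 and 3 are essentially sound and can be made rigorous: $f_{*}$ is indeed a K\"{a}hler immersion for the Dombrowski structures, the relation $f_{*}\circ T_{t}=T'_{Ct}\circ f_{*}$ with a constant rank-$n$ matrix $C$ follows from parallelism and affinity exactly as you say, and, \emph{granted} integrality of $C$, the descent to $m^{\circ}$ on $N^{\circ}$, the equivariance with $\rho([t])=[Ct]$ and finite kernel, and the extension across the non-free locus (which is locally contained in proper analytic subsets, so density plus a Riemann-type removable-singularity argument and continuity of the metric identity suffice) all go through. Note also that the paper itself offers no proof of Proposition \ref{ndcknkdnfknnkk} — Section \ref{ncekndkrnkenknk} defers to \cite{molitorToric}, Section 9 — so your argument has to stand on its own.

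The genuine gap is Step 2, which is the heart of the proposition: you never actually prove that $C$ is integral, i.e.\ that $f_{*}(\mathscr{L})\subseteq\mathscr{L}'$. Your equidimensional argument opens with ``$f_{*}$ is a biholomorphic isometry'', which is unjustified: $f$ is only an affine isometric map, hence a local diffeomorphism, and at this stage nothing excludes, say, an open embedding onto a proper subset of $M'$; in that case $\tau'\circ f_{*}$ maps onto the proper subset $(\kappa')^{-1}(f(M))$ of $(N')^{\circ}$, it is not a covering of $(N')^{\circ}$, and the appeal to uniqueness of the fundamental lattice collapses. Worse, the fact you are implicitly assuming — that an affine isometric map between toric dually flat manifolds of equal dimension is a diffeomorphism — is exactly Proposition \ref{nfeknkefnknwk}, which the paper proves \emph{using} the lifting proposition you are trying to establish, so the argument is circular in spirit. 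The non-equidimensional case is only a sketch: $f(M)$ need not be an embedded submanifold ($f$ need not be injective), and the asserted ``compatibility of the fundamental lattice with autoparallel submanifolds'' (that $\mathscr{L}'$ restricts to $\mathscr{L}$ along $f$) is precisely the integrality statement in another guise; no result quoted in the paper provides it, and you yourself flag that the real work lies there. Until that step is supplied, the proposal does not prove the proposition.
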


	We call $m$ a \textit{lift} of $f$.

\subsection{Examples from Information Geometry.}\label{nknkefnknk} 

	The concept of torification was motivated, in the first place, by the 
	connection between K\"{a}hler geometry, Information Geometry and Quantum Mechanics.
	In this section, we illustrate this connection with a few examples. The reader interested in 
	Information Geometry may consult \cite{Jost2,Amari-Nagaoka,Murray}.

\begin{definition}\label{def:5.1}
	A \textit{statistical manifold} is a pair $(S,j)$, where $S$ is a manifold and where $j$ 
	is an injective map from $S$ to the space of all probability density functions $p$ 
	defined on a fixed measure space $(\Omega,dx)$: 
	\begin{eqnarray*}
		j:S\hookrightarrow\Bigl\{ p:\Omega\to\mathbb{R}\;\Bigl|\; p\:\textup{is measurable, }p\geq 
		0\textup{ and }\int_{\Omega}p(x)dx=1\Bigr\}.
	\end{eqnarray*}
\end{definition}

	If $\xi=(\xi_{1},...,\xi_{n})$ is a coordinate system on a statistical manifold $S$, then we shall 
	indistinctly write $p(x;\xi)$ or $p_{\xi}(x)$ for the probability density function determined by $\xi$.

	Given a ``reasonable" statistical manifold $S$, it is possible to define a metric $h_{F}$ and a 
	family of connections $\nabla^{(\alpha)}$ on $S$ $(\alpha\in\mathbb{R})$ in the following way: 
	for a chart $\xi=(\xi_{1},...,\xi_{n})$ of $S$, define
	\begin{alignat*}{1}
		\bigl(h_F\bigr)_{\xi}\bigl(\partial_{i},\partial_{j}\bigr) & :=
		\mathbb{E}_{p_{\xi}}\bigl(\partial_{i}\ln\bigl(p_{\xi}\bigr)\cdotp\partial_{j}\ln\bigl(p_{\xi}\bigr)\bigr),
		\nonumber\\
		\Gamma_{ij,k}^{(\alpha)}\bigl(\xi\bigr) & :=
		\mathbb{E}_{p_{\xi}}\bigl[\bigl(\partial_{i}\partial_{j}\ln\bigl(p_{\xi}\bigr)
		+\tfrac{1-\alpha}{2}\partial_{i}\ln\bigl(p_{\xi}\bigr)\cdotp\partial_{j}\ln\bigl(p_{\xi}\bigr)\bigr)
		\partial_{k}\ln\bigl(p_{\xi}\bigr)\bigr],\label{eq:48}\nonumber
	\end{alignat*}
	where $\mathbb{E}_{p_{\xi}}$ denotes the mean, or expectation, with respect to the probability 
	$p_{\xi}dx$, and where $\partial_{i}$ is a shorthand for $\tfrac{\partial}{\partial\xi_{i}}$. 
	In the formulas above, it is assumed that the function $p_{\xi}(x)$ is smooth with respect to 
	$\xi$ and that the expectations are finite. When the first formula above defines a smooth metric $h_{F}$ on $S$, 
	it is then called \textit{Fisher metric}. In this case, the 
	$\Gamma_{ij,k}^{(\alpha)}$'s define a connection $\nabla^{(\alpha)}$ on $S$ via the formula 
	$\Gamma_{ij,k}^{(\alpha)}(\xi)=(h_F)_{\xi}(\nabla_{\partial_{i}}^{(\alpha)}\partial_{j},\partial_{k})$, 
	which is called the \textit{$\alpha$-connection}.

%	It can be shown that if the above expressions are defined and smooth for every chart of $S$, 
%	then $h_F$ is a well defined metric on $S$ called the \textit{Fisher metric}, and that the 
%	$\Gamma_{ij,k}^{(\alpha)}$'s define a connection $\nabla^{(\alpha)}$ via the formula 
%	$\Gamma_{ij,k}^{(\alpha)}(\xi)=(h_F)_{\xi}(\nabla_{\partial_{i}}^{(\alpha)}\partial_{j},\partial_{k})$, 
%	which is called the \textit{$\alpha$-connection}.
%
	Among the $\alpha$-connections, the $1$-connection is particularly important and 
	is usually referred to as the \textit{exponential connection}, also denoted by $\nabla^{(e)}$. 
	In this paper, we will only consider statistical manifolds $S$ for which the Fisher metric $h_{F}$ and
	exponential connection $\nabla^{(e)}$ are well defined.

%\begin{proposition}\label{prop:5.2}
%	Let $S$ be a statistical manifold. Then, $(h_F,\nabla^{(\alpha)},\nabla^{(-\alpha)})$ is a dualistic structure on $S$. 
%	In particular, $\nabla^{(-\alpha)}$ is the dual connection of $\nabla^{(\alpha)}$.
%\end{proposition}
%\begin{proof}
%	See \cite{Amari-Nagaoka}.
%\end{proof}

	We now recall the definition of an exponential family. 
\begin{definition}\label{def:5.3} 
	An \textit{exponential family} $\mathcal{E}$ on a measure space $(\Omega,dx)$ is a set of probability 
	density functions $p(x;\theta)$ of the form 
	$p(x;\theta)=\exp\big\{ C(x)+\sum_{i=1}^{n}\theta_{i}F_{i}(x)-\psi(\theta)\big\},$ %\label{eq:49} 
	where $C,F_1...,F_n$ are measurable functions on $\Omega$, $\theta=(\theta_{1},...,\theta_{n})$ 
	is a vector varying in an open subset $\Theta$ of $\mathbb{R}^{n}$ and where $\psi$ is a function defined on $\Theta$.
\end{definition}
	In the above definition, it is assumed that the family of functions $\{1,F_1,...,$ $F_n\}$ 
	is linearly independent, so that the map $p(x,\theta)\mapsto\theta$ becomes a bijection, hence defining a global 
	chart for $\mathcal{E}$. The parameters $\theta_{1},...,\theta_{n}$ are called the 
	\textit{natural} or \textit{canonical parameters} of the exponential family $\mathcal{E}$. 
	The function $\psi$ is known as the \textit{log partition function} or \textit{cumulant function}.

\begin{example}[\textbf{Binomial distribution}]\label{exa:5.6}
	Let $\mathcal{B}(n)$ be the set of Binomial distributions $p(k)=\binom{n}{k}q^{k}\bigl(1-q\bigr)^{n-k}$, $q\in (0,1)$, 
	defined over $\Omega=\{0,...,n\}$. It is a 1-dimensional exponential family, because 
	$p(k)=\exp\big\{ C(k)+\theta F(k)-\psi(\theta)\big\}$, where $\theta=\ln\big(\frac{q}{1-q}\big)$, $C(k)= \ln\binom{n}{k}$, 
	$F(k)=k$, $k\in \Omega$, and $\psi(\theta)=n\ln\big(1+\exp(\theta)\big)$. 
%	\begin{eqnarray*}
%		&&\theta=\ln\Bigl(\frac{q}{1-q}\Bigr),\quad C\bigl(k\bigr)=
%		\ln\binom{n}{k},\quad F(k)= k,\quad\\
%		&& \psi(\theta)= n\ln\bigl(1+\exp(\theta)\bigr).
%	\end{eqnarray*}
\end{example}
\begin{example}[\textbf{Categorical distribution}]\label{exa:5.5}
	Let $\Omega=\{ x_{1},...,x_{n}\}$ be a finite set and let $\mathcal{P}_{n}^{\times}$ be the set of maps
	$p:\Omega\to \mathbb{R}$ satisfying $p(x)>0$ for all $x\in \Omega$ and $\sum_{x\in \Omega}p(x)=1$. Then 
	$\mathcal{P}_{n}^{\times}$ is an exponential family of dimension $n-1$. Indeed, elements in $\mathcal{P}_{n}^{\times}$ 
	can be parametrized as follows: $p(x;\theta)=\exp\big\{\sum_{i=1}^{n-1}\theta_{i}F_{i}(x)-\psi(\theta)\big\}$,
	where $x\in \Omega$, $\theta=(\theta_{1},...,\theta_{n-1})\in\mathbb{R}^{n-1},$ 
	$F_{i}(x_{j})=\delta_{ij}$ \textup{(Kronecker delta)} and $\psi(\theta)=\ln\big(1+\sum_{i=1}^{n-1}e^{\theta_{i}}\big)$. 
%
%
%
%	Given a finite set $\Omega=\{ x_{1},...,x_{n}\}$, define
% 	\begin{eqnarray*}
%		\mathcal{P}_{n}^{\times}\:=\biggl\{ p:\Omega\to\mathbb{R}\:\Bigl|\: p(x)>0 
%		\textup{ for all }x\in\Omega\textup{ and }\sum_{k=1}^{n}p(x_{k})=1\biggr\}.
%	\end{eqnarray*}
%	Elements of $\mathcal{P}_{n}^{\times}$ can be parametrized as follows: 
%	
%	where 
%	\begin{eqnarray*}
%		&&\theta=(\theta_{1},...,\theta_{n-1})\in\mathbb{R}^{n-1},\,\,\,\,\,\,
%		F_{i}(x_{j})=\delta_{ij}\,\,\,\,\,\,\textup{(Kronecker delta)},\\
%		&& \psi(\theta)=\ln\bigg(1+\sum_{i=1}^{n-1}e^{\theta_{i}}\bigg). 
%	\end{eqnarray*}
%	Therefore $\mathcal{P}_{n}^{\times}$ is an exponential family of dimension $n-1$. 
\end{example}

\begin{example}[\textbf{Poisson distribution}]
	A Poisson distribution is a distribution over $\Omega=\mathbb{N}=\{0,1,...\}$ of the form 
	$p(k;\lambda)=e^{-\lambda}\tfrac{\lambda^{k}}{k!}$, 
	where $k\in \mathbb{N}$ and $\lambda>0$. Let $\mathscr{P}$ denote the set of all Poisson distributions 
	$p(\,.\,;\lambda)$, $\lambda>0$. The set $\mathscr{P}$ is an exponential family, because 
	$p(k,\lambda)=\textup{exp}\big(C(k)+F(k)\theta-\psi(\theta)\big),$ where 
	$C(k)=-\ln(k!)$, $F(k)=k$, $\theta=\ln(\lambda)$, and $\psi(\theta)=\lambda=e^{\theta}.$
\end{example}

\begin{example}[\textbf{Negative Binomial distribution}]
	Let $\Omega=\mathbb{N}=\{0,1,....\}$ and $r\in \mathbb{N}$, $r\geq 1$. 
	Let $\mathcal{NB}(r)$ denote the set of functions $p:\Omega\to \mathbb{R}$ of the form
	$p(k;q)=\binom{k+r-1}{r-1}(1-q)^{k}q^{r}$, where $k=0,1,2,...$, $q\in (0,1)$ and 
	$\binom{k+r-1}{r-1}=\tfrac{(k+r-1)!}{k!(r-1)!}$. Using the fact that 
	$\tfrac{1}{(1-t)^{r}}=\sum_{k\geq 0}\binom{k+r-1}{r-1}t^{k}$ for $|t|<1$, 
	one sees that $\sum_{k\geq 0}p(k;q)=1$. Each element of $\mathcal{NB}(r)$ is a called a \textit{negative Binomial distribution}.  
	The set $\mathcal{NB}(r)$ is an exponential family, because $p(k;q)=p(k;\theta)=\exp\big\{ C(k)+\theta F(k)-\psi(\theta)\big\}$,
	where $\theta=\ln(1-q)\in (-\infty,0),$ $C\bigl(k\bigr)= \ln\binom{k+r-1}{r-1}$, $F(k)= k$ and 
	$\psi(\theta)= -r\ln\bigl(1-e^{\theta}\bigr)$. 
\end{example}

	Let $\mathcal{E}$ be an exponential family, with Fisher metric $h_{F}$, exponential connection $\nabla^{(e)}$, 
	natural parameters $\theta=(\theta_{1},...,\theta_{n})$ and log partition function $\psi=\psi(\theta)$. 
	Under mild assumptions, it can be proved that:
	\begin{itemize}
	\item $(\mathcal{E},h_{F},\nabla^{(e)})$ is a dually flat manifold. 
	\item The matrix representation of $h_{F}$ with respect to the basis 
		$\{\tfrac{\partial}{\partial \theta_{1}},...,\tfrac{\partial}{\partial \theta_{n}}\}$ 
		is the Hessian of $\psi$, that is, 
		\begin{eqnarray*}
			(h_{F})_{ij}=h_{F}\bigg(\dfrac{\partial}{\partial \theta_{i}}, 
			\dfrac{\partial}{\partial \theta_{j}}\bigg)=\dfrac{\partial^{2}\psi}{\partial \theta_{i}\partial \theta_{j}}
		\end{eqnarray*}
	for all $1\leq i,j\leq n$. 
	\end{itemize}
	This is the case, for example, 
	if $\Omega$ is a finite set endowed with the counting measure (see \cite{shima}, Chapter 6). 
	In the sequel, we will always regard an exponential family  $\mathcal{E}$ as a dually flat manifold. 

	Since an exponential family is dually flat, it is natural to ask whether it is toric. 
	Below we describe four examples. Let $\mathbb{P}_{n}(c)$ be the complex projective space of complex dimension $n$, 
	endowed with the Fubini-Study metric normalized 
	in such a way that the holomorphic sectional curvature is $c>0$. Let $\Phi_{n}$ be the action of 
	$\mathbb{T}^{n}$ on $\mathbb{P}_{n}(c)$ defined by 
	\begin{eqnarray}\label{neknknknknfdkn}
		\Phi_{n}([t],[z])= [e^{2i\pi t_{1}}z_{1},...,e^{2i\pi t_{n}}z_{n},z_{n+1}]. 
		\,\,\,\,\,\,\,(\textup{homogeneous coordinates})
	\end{eqnarray}

	Given $c<0$, we will denote by $\mathbb{D}(c)$ the unit disk in $\mathbb{C}$ endowed with the Hyperbolic metric 
	$ds^2= -4c^{-1}\left(dx^{2}+dy^{2} \right)(1-x^{2}-y^{2})^{-2}$.

\begin{proposition}\label{nfeknwknwknk}
	In each case below, the torus action is a regular torification of the indicated exponential family $\mathcal{E}$. \\

	\begin{tabular}{lll}
		$(a)$ &  $\mathcal{E}=\mathcal{P}_{n+1}^{\times}$,    &    
						$\Phi_{n}:\mathbb{T}^{n}\times \mathbb{P}_{n}(1)\to \mathbb{P}_{n}(1)$.\\[0.4em]
		$(b)$ & $\mathcal{E}=\mathcal{B}(n)$,                &    $\Phi_{1}:\mathbb{T}^{1}\times 
								\mathbb{P}_{1}(\tfrac{1}{n})\to \mathbb{P}_{1}(\tfrac{1}{n})$.\\[0.4em]
		$(c)$ & $\mathcal{E}=\mathscr{P}$,                   & $\mathbb{T}^{1}\times \mathbb{C}\to \mathbb{C}$, $([t],z)\mapsto e^{2i\pi t}z$.\\[0.4em]	
		$(d)$ & $\mathcal{E}=\mathcal{NB}(r)$,               & $\mathbb{T}^{1}\times \mathbb{D}(-\tfrac{1}{r})\to \mathbb{D}(-\tfrac{1}{r})$, 
		$([t],z)\mapsto e^{2i\pi t}z$.	

	\end{tabular}
\end{proposition}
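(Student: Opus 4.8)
The plan is to verify, in each of the four cases, the covering‑map characterization of a torification given in Proposition~\ref{nkwdnkkfenknk}, and then to observe that the target Kähler manifold is regular. The first step, common to all cases, is to make the dually flat structure on the exponential family $\mathcal{E}$ completely explicit. In the natural parameter $\theta=(\theta_{1},\ldots,\theta_{n})$ the exponential connection $\nabla^{(e)}$ is flat with $\theta$ as a global affine coordinate, and the Fisher metric is the Hessian $\mathrm{Hess}(\psi)$ of the log partition function, which in each case is the explicit function written down in the corresponding Example. Thus $\mathcal{E}$ is identified with the open convex parameter domain $\Theta\subseteq\mathbb{R}^{n}$ carrying the Hessian metric $\mathrm{Hess}(\psi)$; here $\Theta=\mathbb{R}^{n}$ for $\mathcal{P}_{n+1}^{\times}$, $\Theta=\mathbb{R}$ for $\mathcal{B}(n)$ and $\mathscr{P}$, and $\Theta=(-\infty,0)$ for $\mathcal{NB}(r)$. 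By Dombrowski's construction, $T\mathcal{E}$ is then the tube domain $\{z\in\mathbb{C}^{n}:\mathrm{Re}(z)\in\Theta\}$ equipped with the Kähler metric whose Hermitian coefficients in the coordinates $z_{j}=q_{j}+ir_{j}$ (associated to the affine coordinates $\theta_{j}$) are $\psi_{ij}(\mathrm{Re}\,z)$.

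The second step is to produce, in each case, an explicit holomorphic map $\tau\colon T\mathcal{E}\to N^{\circ}$ of exponential type and to check the four requirements of Proposition~\ref{nkwdnkkfenknk}. Concretely one takes $\tau(z)=2e^{z/2}$ for $\mathscr{P}$ (with $N=\mathbb{C}$), $\tau(z)=e^{z/2}$ for $\mathcal{B}(n)$ and for $\mathcal{NB}(r)$, and $\tau(z_{1},\ldots,z_{n})=(e^{z_{1}/2},\ldots,e^{z_{n}/2})$ for $\mathcal{P}_{n+1}^{\times}$, the target of the latter being read in the affine chart $w_{j}=z_{j}/z_{n+1}$ of $\mathbb{P}_{n}(1)$. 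One then verifies: (i) that $\tau$ is a holomorphic covering map onto $N^{\circ}$ — this reduces to the elementary fact that $z\mapsto e^{z/2}$ covers $\mathbb{C}^{*}$ (or covers the punctured disk, when restricted to a half‑plane), together with an identification of the free locus $N^{\circ}$ of the given torus action, which is a short isotropy computation yielding a punctured plane, a punctured disk, or the complex torus $(\mathbb{C}^{*})^{\dim_{\mathbb{C}}N}$ inside the projective space, as the case may be; (ii) that $\tau$ is isometric — a direct pull‑back of the Fubini--Study, flat, or hyperbolic metric, respectively, compared with $\psi_{ij}(\mathrm{Re}\,z)$, and it is precisely this comparison that forces the curvature normalizations $c=1$, $c=\tfrac1n$, flatness, and $c=-\tfrac1r$; (iii) that $\mathrm{Deck}(\tau)$ is the translation group $\{z\mapsto z+4\pi i k:k\in\mathbb{Z}^{n}\}$, which is identified with $\Gamma(L)$ for the parallel lattice $L$ generated by the $\nabla^{(e)}$‑parallel frame $X_{j}=4\pi\,\partial/\partial\theta_{j}$; and (iv) that $\tau\circ T_{t}=\Phi_{[t]}\circ\tau$, which holds because $\tau$ intertwines additive translation in the imaginary (fibre) directions with the rotational torus action $\Phi$. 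Proposition~\ref{nkwdnkkfenknk} then gives that $\Phi$ is a torification of $\mathcal{E}$.

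It remains to note that in each case the target $N$ is regular in the sense of Section~\ref{nfkenkfejdefdknkn}: $\mathbb{P}_{n}(1)$, $\mathbb{C}$ with the flat metric, and $\mathbb{D}(c)$ with the hyperbolic metric are all connected, simply connected, geodesically complete, and carry real‑analytic Kähler metrics. Hence each of the torifications above is regular, which is the assertion of the Proposition.

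The main obstacle is the bookkeeping of normalization constants. The scaling inside the exponential ($z/2$ rather than $z$, and the extra factor $2$ in the Poisson case) must be chosen so that the pulled‑back metric equals $\mathrm{Hess}(\psi)$ \emph{on the nose} while simultaneously $\mathrm{Deck}(\tau)$ equals $\Gamma(L)$ for an honest $\nabla^{(e)}$‑parallel frame; reconciling these two demands is exactly what pins down the curvature values claimed in the statement, and it is easy to be off by a constant here — in particular one must track the Hermitian‑versus‑Riemannian factor of $2$ implicit in the convention $g(q,r)=\mathrm{diag}(h(x),h(x))$ of Dombrowski's construction. A secondary point requiring care is the case $\mathcal{NB}(r)$, where $T\mathcal{E}$ is a half‑plane rather than all of $\mathbb{C}$, so one must check that $z\mapsto e^{z/2}$ maps it onto the \emph{punctured} disk and is a covering there.
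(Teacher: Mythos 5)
Your proposal is correct and follows essentially the same route as the paper: the paper defers the proof to \cite{molitorToric} but the method it displays in Section \ref{nceknwkenfknk} is exactly yours — exhibit an explicit exponential-type holomorphic isometric covering $\tau$ onto $N^{\circ}$, identify $\textup{Deck}(\tau)$ with $\Gamma(L)$ for a $\nabla^{(e)}$-parallel lattice, check equivariance, and invoke Proposition \ref{nkwdnkkfenknk}, with regularity of $\mathbb{P}_{n}(1)$, $\mathbb{C}$ and $\mathbb{D}(c)$ being immediate. The only (cosmetic) difference is that the paper treats the normalized models $M_{c}$ and transfers the torification to $\mathcal{B}(n)$, $\mathscr{P}$, $\mathcal{NB}(r)$ via the isomorphisms $\mathcal{E}\cong M_{c}$, whereas you work directly in the natural parameter (absorbing the reparametrization $\theta\mapsto\theta/2$ into the maps $e^{z/2}$) and also handle the categorical case $(a)$ directly; your normalizations and the lattice $4\pi\,\partial/\partial\theta_{j}$ are consistent with the paper's $2\pi\epsilon(c)\,\partial/\partial\theta$.
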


	\noindent For a proof and more examples, see \cite{molitorToric} (see also Section \ref{nceknwkenfknk} below).

\subsection{Hessian sectional curvature.}\label{sec:1}

	The notion of dually flat manifold was originally introduced by H. Shima in a 
	different (but equivalent) form, under the name \textit{Hessian manifold} \cite{shima3,shima}.
	In this section, we briefly recall Shima's definition, and discuss the notion of 
	\textit{Hessian sectional curvature}, which is a real-analogue of the notion 
	of holomorphic sectional curvature in K\"{a}hler geometry. Our main reference is \cite{shima}.

	We begin by a general discussion concerning affine geometry. An \textit{affine manifold} 
	is a pair $(M,\nabla)$, where $M$ is a manifold and $\nabla$ is a torsion-free flat connection. 
	An \textit{affine coordinate system} on an affine 
	manifold $(M,\nabla)$ is a coordinate system $(x_{1},...,x_{n})$ defined on some open set $U\subseteq M$ such that 
	$\nabla_{\partial_{i}}\partial_{j}=0$ for all $i,j=1,...,n$, where $\partial_{i}=\tfrac{\partial}{\partial x_{i}}$. 
	It can be shown that for every point $p$ in an affine manifold $M$, there is an affine coordinate system 
	$(x_{1},...,x_{n})$ defined on some neighborhood $U\subseteq M$ of $p$ (see \cite{shima}). 

\begin{definition}\label{nfekwnkefnknkhh}
	Let $(M,\nabla)$ be an affine manifold. A Riemannian metric $g$ on $M$ is said to be a \textit{Hessian metric} 
	if for every $p\in M$, there is an affine coordinate system $(x_{1},...,x_{n})$ defined on some open 
	neighborhood $U\subseteq M$ of $p$ and a smooth function $\varphi:U\to \mathbb{R}$ such that 
        \begin{eqnarray*}
		g_{ij}=\dfrac{\partial^{2}\varphi}{\partial x_{i}\partial x_{j}}
	\end{eqnarray*}
	on $U$ for all $i,j\in \{1,...,n\}$, where $g_{ij}=g(\tfrac{\partial}{\partial x_{i}},\tfrac{\partial}{\partial x_{j}})$. In this case, 
	the pair $(\nabla,g)$ is called a \textit{Hessian structure} on $M$, and $\varphi$ is said to be a \textit{potential} of $(\nabla,g)$. 
	A manifold $M$ endowed with a Hessian structure $(\nabla,g)$ is called a \textit{Hessian manifold}, and is denoted by $(M,\nabla,g)$. 
\end{definition}

	The following result shows that the notions of Hessian manifolds and dually flat manifolds are equivalent
	(see, e.g., \cite{shima}, Corollary 2.1 and \cite{Amari-Nagaoka}, Section 3.3).
\begin{lemma}
	Let $(M,\nabla)$ be an affine manifold equipped with a Riemannian metric $g$. Then $(M,\nabla,g)$ is a Hessian manifold 
	if and only if $(M,g,\nabla)$ is a dually flat manifold. 
\end{lemma}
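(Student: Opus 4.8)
The plan is to collapse both sides of the equivalence onto a single local condition on the dual connection $\nabla^{*}$. Since $(M,\nabla)$ is an affine manifold, $\nabla$ is already torsion-free and flat, so ``$(M,g,\nabla)$ is dually flat'' means precisely that $\nabla^{*}$ is torsion-free with vanishing curvature, while ``$(M,\nabla,g)$ is Hessian'' means precisely that $g$ is a Hessian metric with respect to $\nabla$. Every condition in sight is local and pointwise, so I would fix an arbitrary point $p$, choose an affine coordinate system $(x_{1},\dots,x_{n})$ around $p$ with $\nabla_{\partial_{i}}\partial_{j}=0$, and work there throughout; in particular no global hypothesis is needed.

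First I would describe $\nabla^{*}$ concretely. Setting $C(X,Y,Z):=(\nabla_{X}g)(Y,Z)$, the defining relation $X(g(Z,Y))=g(\nabla_{X}Z,Y)+g(Z,\nabla^{*}_{X}Y)$ rearranges to $g(\nabla^{*}_{X}Y-\nabla_{X}Y,Z)=C(X,Z,Y)$, so $\nabla^{*}=\nabla+A$ for a $(1,2)$-tensor $A$ with $g(A_{X}Y,Z)=C(X,Y,Z)$ (using that $C$ is symmetric in its last two arguments, as $g$ is symmetric). Since $\nabla$ is torsion-free, the torsion of $\nabla^{*}$ equals $A_{X}Y-A_{Y}X$, which vanishes iff $C$ is symmetric in its first two arguments, hence iff $C$ is totally symmetric. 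In the affine chart, $C(\partial_{k},\partial_{i},\partial_{j})=\partial_{k}g_{ij}$, so $\nabla^{*}$ torsion-free $\iff$ $\partial_{k}g_{ij}=\partial_{i}g_{kj}$ for all $i,j,k$. For the curvature I would invoke the standard identity $g(R^{\nabla}(X,Y)Z,W)+g(Z,R^{\nabla^{*}}(X,Y)W)=0$, obtained by differentiating $g(Z,W)$ twice and using the duality relation; since $g$ is nondegenerate and $R^{\nabla}=0$, this forces $R^{\nabla^{*}}=0$ automatically. Thus, under our standing hypotheses, $\nabla^{*}$ is flat as a connection $\iff$ it is merely torsion-free $\iff$ $\partial_{k}g_{ij}=\partial_{i}g_{kj}$ in every affine chart.

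It remains to identify this symmetry condition with $g$ being Hessian. If locally $g_{ij}=\partial_{i}\partial_{j}\varphi$, then $\partial_{k}g_{ij}=\partial_{k}\partial_{i}\partial_{j}\varphi$ is manifestly symmetric in $k,i,j$. Conversely, assuming $\partial_{k}g_{ij}=\partial_{i}g_{kj}$ on a chart which we may shrink to a ball, I would apply the Poincar\'e lemma twice: for each fixed $j$ the $1$-form $\sum_{i}g_{ij}\,dx_{i}$ is closed, so $g_{ij}=\partial_{i}\varphi_{j}$ for some function $\varphi_{j}$; then $\partial_{i}\varphi_{j}=g_{ij}=g_{ji}=\partial_{j}\varphi_{i}$ shows that $\sum_{j}\varphi_{j}\,dx_{j}$ is closed, so $\varphi_{j}=\partial_{j}\varphi$ for some $\varphi$, whence $g_{ij}=\partial_{i}\partial_{j}\varphi$. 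Chaining the equivalences (Hessian metric $\iff$ $\partial_{k}g_{ij}=\partial_{i}g_{kj}$ $\iff$ $\nabla^{*}$ torsion-free $\iff$ $\nabla^{*}$ flat) proves the lemma. The only step demanding genuine care is the curvature duality identity $g(R^{\nabla}(X,Y)Z,W)=-g(Z,R^{\nabla^{*}}(X,Y)W)$; the rest is bookkeeping with the Christoffel symbols and two applications of the Poincar\'e lemma.
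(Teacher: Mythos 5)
Your proof is correct, and it is essentially the standard argument: the paper itself gives no proof of this lemma, only a citation to Shima (Corollary 2.1) and Amari--Nagaoka (Section 3.3), where the equivalence is established in just this way — reducing both conditions to the total symmetry of $\nabla g$ (equivalently $\partial_{k}g_{ij}=\partial_{i}g_{kj}$ in affine coordinates), noting that flatness of $\nabla^{*}$ is automatic from the curvature duality identity, and recovering the local potential by two applications of the Poincar\'e lemma. Nothing further is needed.
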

	
	Let $(M,\nabla,g)$ be a Hessian manifold. We will denote by $\gamma$ 
	the tensor field of type $(1,2)$ on $M$ defined by 
	\begin{eqnarray*}
		\gamma(X,Y)=\nabla^{R}_{X}Y-\nabla_{X}Y,
	\end{eqnarray*}
	where $\nabla^{R}$ is the Riemannian connection associated to $g$. 

	The \textit{Hessian curvature tensor}, denoted by $Q$, 
	is the covariant differential of $\gamma$ with respect to $\nabla$. Thus $Q=\nabla \gamma$ is a tensor field of 
	type $(1,3)$ and we have
	\begin{eqnarray*}
		Q(X,Y,Z)=(\nabla\gamma)(X,Y,Z)=\nabla_{Z}\big(\gamma(X,Y)\big)-\gamma(\nabla_{Z}X,Y)-\gamma(X,\nabla_{Z}Y)
	\end{eqnarray*}
	for all vector fields $X,Y,Z$ on $M$. 

	In what follows, we will use standard tensorial notation and Einstein's summation convention.
	So, for instance $Q(\tfrac{\partial}{\partial x_{j}},\tfrac{\partial}{\partial x_{k}},\tfrac{\partial}{\partial x_{l}})
	=\sum_{i=1}^{n}Q^{i}_{\textup{}\,\,jkl}\tfrac{\partial}{\partial x_{i}}=Q^{i}_{\textup{}\,\,jkl}\tfrac{\partial}{\partial x_{i}}$.

	Let $\widehat{Q}$ be the endomorphism on the space of symmetric contravariant 
	tensor fields of degree 2 defined by 
	\begin{eqnarray}\label{nekdwnkvnknkn}
		\widehat{Q}(\xi)=g^{ka}Q^{i}_{\textup{}\,\,jal}\xi^{jl} 
		\dfrac{\partial}{\partial x_{i}}\otimes \dfrac{\partial}{\partial x_{k}}, 
	\end{eqnarray}
	where $(x_{1},...,x_{n})$ is an affine coordinate system on $U\subseteq M$ with potential $\varphi:U\to \mathbb{R}$,
	$\xi=\xi^{ik}\tfrac{\partial}{\partial x_{i}}\otimes \tfrac{\partial}{\partial x_{k}}$ and $g^{ka}$ is the $(k,a)$-entry 
	of the inverse of the matrix representation of the metric $g$ in the coordinates $(x_{1},...,x_{n})$. 

	The \textit{Hessian sectional curvature} of a nonzero symmetric contravariant 
	tensor $\xi_{p}$ of degree 2 at $p\in M$ is the real number
	\begin{eqnarray}\label{nvejbjefbjnkdnk}
		S(\xi_{p})=\dfrac{\langle \widehat{Q}(\xi_{p}),\xi_{p}\rangle}{\langle \xi_{p},\xi_{p}\rangle}, 
	\end{eqnarray}
	where $\langle \xi,\eta\rangle=\xi^{ij}g_{ia}g_{jb}\eta^{ab}$ is the inner product on the space of 
	contravariant tensor fields of degree 2 induced by $g$. 

	If $S(\xi_{p})$ is a constant $c$ for all symmetric contravariant
	tensor field $\xi_{p}\neq 0$ of degree 2 and for all $p\in M$, then $(M, \nabla, g)$ 
	is said to be a \textit{Hessian manifold of constant Hessian sectional curvature $c$}.

\begin{proposition}[\cite{shima}, Corollary 3.1]\label{nefknkekfwnk}
	Let $(M,\nabla,g)$ be a Hessian manifold. Suppose that $TM$ is endowed with the 
	K\"{a}hler structure associated to Dombrowski's construction. The following conditions are equivalent.
	\begin{enumerate}[(a)]
		\item The Hessian sectional curvature of $(M,\nabla,g)$ is a constant $c$. 
		\item The holomorphic sectional curvature of $TM$ is a constant $-c$. 
	\end{enumerate}
\end{proposition}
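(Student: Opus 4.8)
The plan is to prove the equivalence by an explicit local computation, using that both the Hessian sectional curvature and the holomorphic sectional curvature are pointwise (indeed, purely algebraic once the curvature tensors are known) invariants. Fix $p\in M$ and an affine coordinate system $(x_{1},\dots ,x_{n})$ around $p$ with local potential $\varphi$, so that $g_{ij}=\partial_{i}\partial_{j}\varphi$. Passing to the associated complex coordinates $z_{j}=q_{j}+ir_{j}$ on $\pi^{-1}(U)\subseteq TM$ (Section \ref{nkwwnkwnknknfkn}) one has $g_{i\bar{j}}=\tfrac12(\partial_{i}\partial_{j}\varphi)\circ\pi$ (up to the chosen normalization), and, crucially, these components depend only on the base variables $q=x\circ\pi$. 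First I would feed this into the standard formula $R_{i\bar{j}k\bar{l}}=-\partial_{z_{k}}\partial_{\bar{z}_{l}}g_{i\bar{j}}+g^{p\bar{q}}(\partial_{z_{k}}g_{i\bar{q}})(\partial_{\bar{z}_{l}}g_{p\bar{j}})$ for the curvature of a K\"ahler metric: since $g_{i\bar{j}}$ is independent of the fibre variables, every derivative collapses to a derivative of $\varphi$ on the base, so $R_{i\bar{j}k\bar{l}}$ is expressed purely in terms of $\partial^{3}\varphi$, $\partial^{4}\varphi$ and the inverse Hessian $g^{pq}$.

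The second step is to compute Shima's Hessian curvature tensor in the same coordinates. Since $\nabla$ has vanishing Christoffel symbols in affine coordinates, $\gamma^{k}_{ij}$ equals the Christoffel symbols of $g$, which (using $g_{ij}=\partial_{i}\partial_{j}\varphi$) simplify to $\gamma_{ijk}=\tfrac12\partial_{i}\partial_{j}\partial_{k}\varphi$; then $Q_{ijkl}=g_{im}\partial_{l}\gamma^{m}_{jk}$ becomes an expression of exactly the same shape as the one obtained for $R_{i\bar{j}k\bar{l}}$ in step one. Comparing the two, I expect to conclude that $R_{i\bar{j}k\bar{l}}$ equals a fixed universal constant times a suitable index permutation of $Q_{ijkl}$, the permutation reflecting the different roles of the holomorphic and antiholomorphic slots. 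Along the way one records the algebraic symmetries of the two tensors: $R$ has the usual K\"ahler symmetries, while $Q$ satisfies the ``symmetric-pair'' analogue of the Riemann symmetries together with a first-Bianchi-type identity, and these are precisely what will make the final matching go through.

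The third step converts ``constant sectional curvature'' into a tensor identity on each side. On the K\"ahler side, a connected K\"ahler manifold has constant holomorphic sectional curvature $\lambda$ if and only if $R_{i\bar{j}k\bar{l}}=-\tfrac{\lambda}{2}(g_{i\bar{j}}g_{k\bar{l}}+g_{i\bar{l}}g_{k\bar{j}})$; this is classical, the point being that $Z\mapsto R(Z,\bar{Z},Z,\bar{Z})$ is a diagonal bihomogeneous polynomial which, by polarization together with the K\"ahler symmetries of $R$, determines $R$ completely. On the Hessian side, the hypothesis $S(\xi_{p})=c$ for all symmetric contravariant $2$-tensors $\xi_{p}\neq 0$ means $\langle\widehat{Q}(\xi),\xi\rangle=c\langle\xi,\xi\rangle$ for all $\xi$; since $\widehat{Q}$ is self-adjoint for $\langle\,,\,\rangle$ (a consequence of the symmetries of $Q$), polarization on the finite-dimensional space of symmetric $2$-tensors gives $\widehat{Q}=c\cdot\mathrm{Id}$, which, using the symmetries and Bianchi identity of $Q$, is equivalent to $Q$ being the model tensor $\tfrac{c}{2}(g_{ij}g_{kl}+g_{il}g_{kj})$ (suitably indexed). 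Finally, inserting the identification of step two, the conditions ``$TM$ has constant holomorphic sectional curvature $-c$'' and ``$M$ has constant Hessian sectional curvature $c$'' reduce to one and the same equation for $\varphi$ near $p$; as $p$ is arbitrary, this yields (a)$\Leftrightarrow$(b).

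The hard part will be the third step, and in particular the bookkeeping of the algebraic symmetries: $Q$ and $R$ satisfy analogous but index-shifted families of identities, and one must check that the contraction defining $\widehat{Q}$ (hence $S(\xi)$), the contraction defining the holomorphic sectional curvature, and the two ``model tensors'' all align after the permutation produced in step two — naively the relevant contraction patterns are transverse to the natural pair-symmetries, so the Bianchi-type identity is genuinely needed. Pinning down the exact universal constant and sign in steps one and two — so that the two constant-curvature conditions come out as $c$ and $-c$ rather than some other proportional pair — is the other point requiring care, though it is a routine if lengthy calculation. Since the statement is Corollary 3.1 of \cite{shima}, one may also simply invoke Shima's book; the outline above is the content of that argument.
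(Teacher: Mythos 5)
The paper gives no proof of this proposition --- it is quoted directly from Shima's book (\cite{shima}, Corollary 3.1) --- and your outline is essentially Shima's own argument: compute the K\"ahler curvature of the Dombrowski metric in the complex coordinates coming from affine coordinates (where $g_{i\bar j}$ depends only on the base variables), identify it, up to a universal constant and an index permutation, with the Hessian curvature tensor $Q$, and polarize both constant-curvature hypotheses into the respective model-tensor identities for $R$ and $Q$. The individual ingredients you quote ($\gamma_{ijk}=\tfrac12\partial_i\partial_j\partial_k\varphi$ in affine coordinates, $Q^i_{\;jkl}=\partial_l\gamma^i_{jk}$, the fibre-independence of $g_{i\bar j}$, and the polarization arguments on both sides) are correct, so the plan is sound and coincides with the cited proof; the remaining work is the sign and normalization bookkeeping that you already flag and that Shima carries out.
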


	It should be noted that the Hessian sectional curvature is also well-defined for Hessian manifolds of dimension 1, 
	in which case we have the following

\begin{proposition}[\cite{Molitor-Hessian}, Proposition 1]\label{nfendwnfeknkdwnkn}
	Let $(M,\nabla,g)$ be a Hessian manifold of dimension 1 with Hessian sectional curvature $S$. 
	Let $x:U\to \mathbb{R}$ be an affine coordinate system defined 
	on an open connected set $U\subseteq M$. Then for any nonzero symmetric contravariant 
	tensor $\xi_{p}$ of degree 2 at $p\in U$, 
	\begin{eqnarray}\label{enknknkrfnkefnknk}
		S(\xi_{p})=\dfrac{1}{2g}\dfrac{\partial^{2}\ln(g)}{\partial x^{2}},
	\end{eqnarray}
	where $g(x)=g(\tfrac{\partial}{\partial x}, \tfrac{\partial}{\partial x})$. 
\end{proposition}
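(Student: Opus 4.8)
The plan is to reduce the statement to a direct computation in the single affine coordinate $x$, exploiting that in dimension one the space of symmetric contravariant $2$-tensors at a point is one-dimensional. First I would note that since $\textup{dim}(M)=1$, any nonzero symmetric contravariant tensor $\xi_{p}$ of degree $2$ at $p\in U$ is of the form $\xi_{p}=\lambda\,\tfrac{\partial}{\partial x}\otimes\tfrac{\partial}{\partial x}$ (evaluated at $p$) for some $\lambda\neq 0$. Since the right-hand side of \eqref{nvejbjefbjnkdnk} is a quotient of two quadratic forms in $\xi_{p}$, it is invariant under rescaling of $\xi_{p}$; hence $S(\xi_{p})$ depends only on $p$, and it suffices to evaluate $S$ on the tensor $\xi$ with $\xi^{11}=1$. (In particular this shows that $S$ is well defined when $n=1$.)

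Next I would compute the tensors $\gamma$ and $Q=\nabla\gamma$ in the coordinate $x$. Writing $g=g_{11}=g(\tfrac{\partial}{\partial x},\tfrac{\partial}{\partial x})$, one has $\nabla_{\partial/\partial x}\tfrac{\partial}{\partial x}=0$ because $x$ is affine for $\nabla$, while the only Christoffel symbol of the Levi-Civita connection $\nabla^{R}$ equals $\tfrac{1}{2}g^{11}\tfrac{\partial g_{11}}{\partial x}=\tfrac{1}{2}\tfrac{\partial\ln g}{\partial x}$. Hence $\gamma(\tfrac{\partial}{\partial x},\tfrac{\partial}{\partial x})=\nabla^{R}_{\partial/\partial x}\tfrac{\partial}{\partial x}-\nabla_{\partial/\partial x}\tfrac{\partial}{\partial x}=\tfrac{1}{2}\tfrac{\partial\ln g}{\partial x}\,\tfrac{\partial}{\partial x}$, i.e. $\gamma^{1}_{11}=\tfrac{1}{2}\tfrac{\partial\ln g}{\partial x}$. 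Using $\nabla_{\partial/\partial x}\tfrac{\partial}{\partial x}=0$ once more, the defining formula for $Q$ collapses to $Q(\tfrac{\partial}{\partial x},\tfrac{\partial}{\partial x},\tfrac{\partial}{\partial x})=\nabla_{\partial/\partial x}\big(\gamma(\tfrac{\partial}{\partial x},\tfrac{\partial}{\partial x})\big)=\tfrac{\partial\gamma^{1}_{11}}{\partial x}\,\tfrac{\partial}{\partial x}$, so $Q^{1}_{111}=\tfrac{1}{2}\tfrac{\partial^{2}\ln g}{\partial x^{2}}$.

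Finally I would substitute into \eqref{nekdwnkvnknkn} and \eqref{nvejbjefbjnkdnk}. With $\xi^{11}=1$, formula \eqref{nekdwnkvnknkn} gives $\widehat{Q}(\xi)^{11}=g^{11}Q^{1}_{111}=\tfrac{1}{2g}\tfrac{\partial^{2}\ln g}{\partial x^{2}}$; since in dimension one the inner product reads $\langle\alpha,\beta\rangle=\alpha^{11}g_{11}g_{11}\beta^{11}=g^{2}\alpha^{11}\beta^{11}$, this yields $\langle\widehat{Q}(\xi),\xi\rangle=\tfrac{g}{2}\tfrac{\partial^{2}\ln g}{\partial x^{2}}$ and $\langle\xi,\xi\rangle=g^{2}$, whence $S(\xi)=\tfrac{1}{2g}\tfrac{\partial^{2}\ln g}{\partial x^{2}}$, which is \eqref{enknknkrfnkefnknk}. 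I do not expect a genuine obstacle here: the proof is index bookkeeping, and the only point deserving a line of care is that \eqref{enknknkrfnkefnknk} is independent of the chosen affine chart on $U$; this follows because an affine change of coordinate in dimension one is $x\mapsto ax+b$ with $a\neq 0$, under which $\tfrac{1}{2g}$ acquires a factor $a^{2}$ and $\tfrac{\partial^{2}\ln g}{\partial x^{2}}$ a factor $a^{-2}$, so the product is unchanged.
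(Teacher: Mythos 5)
Your computation is correct: in dimension one the reduction to $\xi^{11}=1$, the values $\gamma^{1}_{11}=\tfrac{1}{2}\tfrac{\partial \ln g}{\partial x}$ and $Q^{1}_{111}=\tfrac{1}{2}\tfrac{\partial^{2}\ln g}{\partial x^{2}}$, and the substitution into \eqref{nekdwnkvnknkn}--\eqref{nvejbjefbjnkdnk} all check out, and the remark on invariance under affine changes $x\mapsto ax+b$ is the right point to flag. The paper itself does not reprove this statement but imports it from \cite{Molitor-Hessian}, and your argument is exactly the direct coordinate verification that the cited proof carries out, so there is nothing to add.
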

	
	It follows from Proposition \ref{nfendwnfeknkdwnkn} that the Hessian sectional curvature in dimension 1 
	is just a smooth function $S:M\to \mathbb{R}$ whose local expression is given by the right hand side of \eqref{enknknkrfnkefnknk}.
	Moreover, we have the following 

\begin{proposition}[\cite{Molitor-Hessian}, Proposition 3]\label{nkwknkrnkwdnknkn}
	Let $(M,\nabla,g)$ be a Hessian manifold of dimension 1 with Hessian sectional curvature $S:M\to \mathbb{R}$. 
	Let $x:U\to \mathbb{R}$ be an affine coordinate system with respect to $\nabla$ defined on a 
	connected open subset $U$ of $M$. If the Hessian sectional curvature $S$ is constant and equal to $\lambda\in \mathbb{R}$, 
	then there are real numbers $a$ and $b$ such that 
	the local expression of $g(x)=g(\tfrac{\partial}{\partial x},\tfrac{\partial}{\partial x})$ on $U$ is given by
	\begin{enumerate}[(a)]
	\item $g(x)=e^{ax+b}$\,\,\,if $\lambda=0.$
	\item $g(x)=\tfrac{a}{\lambda \cos^{2}\big(\sqrt{a}x+b\big)}$ or 
		$\tfrac{a}{\lambda \sinh^{2}\big(\sqrt{a}x+b\big)}$ 
		or $\tfrac{1}{\lambda(x+b)^{2}}$ \,\,\, if $\lambda>0.$
	\item $g(x)=\tfrac{-a}{\lambda \cosh^{2}\big(\sqrt{a}x+b\big)}$\,\,\, if $\lambda<0.$
	\end{enumerate}
\end{proposition}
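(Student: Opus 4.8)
The plan is to reduce the statement to a single ordinary differential equation in the coordinate $x$ and then integrate it. Since $U$ is connected and $x:U\to\mathbb{R}$ is a coordinate, $I:=x(U)$ is an open interval. Writing $g(x)=g(\tfrac{\partial}{\partial x},\tfrac{\partial}{\partial x})>0$ and combining Proposition \ref{nfendwnfeknkdwnkn} with the hypothesis $S\equiv\lambda$, we obtain
\[
\frac{d^{2}}{dx^{2}}\ln g(x)=2\lambda\, g(x)\qquad\textup{on }I.
\]
Equivalently, setting $u=\ln g$ (a smooth real function on $I$), the function $u$ solves the autonomous second-order equation $u''=2\lambda e^{u}$. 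For $\lambda=0$ this reads $u''=0$, so $u=ax+b$ and $g=e^{ax+b}$, which is case (a); the remaining work concerns $\lambda\neq 0$.

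For $\lambda\neq 0$ I would first record the first integral: multiplying by $u'$ and integrating once gives $(u')^{2}=4\lambda e^{u}+C$ for some $C\in\mathbb{R}$, i.e. $(g'/g)^{2}=4\lambda g+C$. From here one can integrate $\int du/\sqrt{4\lambda e^{u}+C}=\pm x+\textup{const}$ directly, splitting on the sign of $C$ and using the substitution $s=e^{-u}$ to reduce to elementary integrals of the form $\int ds/\sqrt{ks^{2}+4\lambda s}$; but a cleaner route is to argue by uniqueness. The equation $u''=2\lambda e^{u}$ meets the hypotheses of the Picard--Lindel\"of theorem, so a solution on $I$ is determined by the pair $(u(x_{0}),u'(x_{0}))$ at any fixed $x_{0}\in I$, equivalently by $(g(x_{0}),g'(x_{0}))$ with $g(x_{0})>0$. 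It therefore suffices to check: (i) each function listed in (b) and (c) satisfies $\tfrac{d^{2}}{dx^{2}}\ln g=2\lambda g$ (a direct differentiation, which also exhibits the associated constant $C=(g'(x_{0})/g(x_{0}))^{2}-4\lambda g(x_{0})$); and (ii) as the parameters $a>0$ and $b\in\mathbb{R}$ range over their allowed values, these functions realize every admissible initial condition. For (ii): when $\lambda>0$, the $\cos^{-2}$, quadratic, and $\sinh^{-2}$ families correspond exactly to $C<0$, $C=0$, $C>0$, and a short computation gives a unique admissible $a$ in each case (for instance $a=\lambda g(x_{0})-\tfrac14(g'(x_{0})/g(x_{0}))^{2}>0$ for the $\cos^{-2}$ family, and $a=\lvert C\rvert/4$ otherwise) together with a suitable $b$, the sign of $g'(x_{0})$ being matched by the sign of $\tan$, $\coth$, or $x+b$; when $\lambda<0$ one has $C>0$ automatically, and the $\cosh^{-2}$ family with $a=C/4$ accounts for all initial data (the needed inequality $a\geq\lvert\lambda\rvert g(x_{0})$ reduces to $(g'(x_{0}))^{2}\geq 0$).

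Finally I would upgrade this local agreement to agreement on all of $I$. Given $g$ on $I$, fix $x_{0}\in I$ and let $\widetilde{g}$ be the listed function matching $g$ and $g'$ at $x_{0}$; it is smooth and positive on a maximal open interval $J\ni x_{0}$, and by uniqueness of the initial value problem $g=\widetilde{g}$ on $I\cap J$. If $I\not\subseteq J$ then $J$ has a finite endpoint $x_{1}\in I$ (the exponential and $\cosh^{-2}$ forms are defined on all of $\mathbb{R}$, so this can occur only for the $\cos^{-2}$, $\sinh^{-2}$, or quadratic forms), and $\widetilde{g}(x)\to+\infty$ as $x\to x_{1}$ from inside $J$ (a zero of $\cos$, of $\sinh$, or of $x+b$), contradicting the continuity and finiteness of $g$ at $x_{1}$; hence $I\subseteq J$ and $g=\widetilde{g}$ on $I$, which is the assertion. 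The only mildly delicate points are the parameter bookkeeping in step (ii) and this blow-up/connectedness argument; everything else is routine calculus.
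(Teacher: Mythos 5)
Your argument is correct: the reduction via Proposition \ref{nfendwnfeknkdwnkn} to $(\ln g)''=2\lambda g$ is exactly the right starting point, your verification that each listed family solves this equation is accurate (with first-integral constants $C=-4a,\,0,\,4a$ for the $\cos^{-2}$, quadratic and $\sinh^{-2}$ cases and $C=4a>0$ when $\lambda<0$), the parameter bookkeeping in step (ii) is sound (in particular the constraints $a\leq\lambda g(x_{0})$ for the $\cos^{-2}$ family and $\cosh^{2}\geq 1$ reducing to $(g'(x_{0}))^{2}\geq 0$ when $\lambda<0$), and the maximal-interval/blow-up argument correctly rules out the model solution having a pole inside $x(U)$. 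However, your route differs from the one in the cited source \cite{Molitor-Hessian} (the same strategy is visible in the analogous statement for exponential families, Proposition \ref{prop:6.2} of the companion material): there one does not verify-and-classify but integrates explicitly, passing from $(\ln g)''=2\lambda g$ to a first-order Riccati-type equation for the derivative of a local potential (the analogue of $\eta'=-\tfrac{\lambda}{2}\eta^{2}+a\eta+b$), and then performing the quadrature case by case on the sign of the discriminant, which produces the closed forms (a)--(c) directly, the degenerate branches being eliminated along the way. The explicit integration has the advantage of deriving the formulas rather than guessing them, at the cost of handling the partial-fraction cases and branch/sign issues; your Picard--Lindel\"of approach avoids all quadratures but shifts the burden to showing that the two-parameter families exhaust every admissible initial condition $(g(x_{0})>0,\,g'(x_{0}))$ and to the connectedness/blow-up step, both of which you carried out correctly. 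Either way the conclusion holds on all of the interval $x(U)$, so your proof is a valid alternative to the one in the reference.
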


	Note that the constant $a$ must be positive when $\lambda\neq 0$ (since $g>0$).

\subsection{Exponential families with constant Hessian sectional curvature.} \label{infnekfkenkfnk}

	In this section, we discuss a special case of Proposition \ref{nkwknkrnkwdnknkn} in which $M=\mathcal{E}$ is
	a 1-dimensional exponential families defined over a finite sample space. The material is taken from \cite{Molitor-Hessian}. 

	We start by establishing some notation. Let $\Omega=\{x_{0},...,x_{m}\}$ be a finite set, and let 
	$C(\Omega)$ be the space of maps $\Omega\to \mathbb{R}$ (there is a natural identification 
	$C(\Omega)\cong \mathbb{R}^{m+1}$). Given $F,C\in C(\Omega)$, we will denote by 
	$\mathcal{E}_{C,F}$ the 1-dimensional exponential family defined over $\Omega$, with elements of the form 
	\begin{eqnarray*}
		 p_{C,F}(x;\theta)=\textup{exp}\big\{C(x)+\theta F(x)-\psi_{C,F}(\theta)\big\},
	\end{eqnarray*}
	where $x\in \Omega$, $\theta\in \mathbb{R}$ and $\psi_{C,F}(\theta)=\ln\big(\sum_{k=0}^{m}\textup{exp}(C(x_{k})
	+\theta F(x_{k}))\big).$ In the notation above, it is assumed that the function $F$ and the constant function 
	$\mathds{1}:\Omega\to \mathbb{R},$ $x\mapsto 1$ are linearly independent (this guarantees that the map 
	$\mathbb{R}\to \mathcal{E}_{C,F}$, $\theta\mapsto p_{C,F}(\,.\,;\theta)$ is bijective). In other words, 
	$F\in \mathbb{R}^{m+1}-\mathbb{R}\cdot \mathds{1}$. 
\begin{definition}\label{kwwdefksn}
	Two 1-dimensional exponential families $\mathcal{E}_{C,F}$ and $\mathcal{E}_{C',F'}$ defined over the same 
	set $\Omega=\{x_{0},...,x_{m}\}$ are \textit{equivalent} if the families of maps 
	$\{p_{C,F}(\,.\,;\theta)\,:\,\Omega\to \mathbb{R}\}_{\theta\in \mathbb{R}}$ and 
	$\{p_{C',F'}(\,.\,;\theta)\,:\,\Omega\to \mathbb{R}\}_{\theta\in \mathbb{R}}$ coincide. In this case, we write 
	$\mathcal{E}_{C,F}\sim \mathcal{E}_{C',F'}$. 
\end{definition}
	
	In order to characterize equivalent exponential families, we introduce the group of matrices 
	\begin{eqnarray*}
		G:=\bigg\{
		 \begin{bmatrix}
			1 & b & d\\
			0 & a & c\\
			0 & 0 & 1
		\end{bmatrix}
		\,\,\,\,\bigg\vert\,\,\,\, a,b,c,d\in \mathbb{R}, \,\,a\neq 0 \bigg\}.
	\end{eqnarray*}
	Given an integer $m\geq 1$, the group $G$ acts on 
	$U_{m}:=\mathbb{R}^{m+1}\times (\mathbb{R}^{m+1}-\mathbb{R}\cdot\mathds{1})$ via the formula
	\begin{eqnarray*}
		  \begin{bmatrix}
			1 & b & d\\
			0 & a & c\\
			0 & 0 & 1
		\end{bmatrix}
		\cdot (C,F):= (C+bF+d\mathds{1}, aF+c\mathds{1}), 
	\end{eqnarray*}
	where $C\in \mathbb{R}^{m+1}$ and $F\in \mathbb{R}^{m+1}-\mathbb{R}\cdot\mathds{1}$. 
\begin{lemma}[\cite{Molitor-Hessian}, Lemma 5]\label{nknknkn}
	Two 1-dimensional exponential families $\mathcal{E}_{C,F}$ and $\mathcal{E}_{C',F'}$ defined over the same 
	finite set $\Omega$ are equivalent if and only if there exists $g=\Big[\begin{smallmatrix}
		1&b&d\\
		0&a&c\\
		0&0&1
	\end{smallmatrix} \Big]\in G$ such that $(C,F)=g\cdot (C',F')$. 
	In this case, the following hold. 
	\begin{enumerate}[(i)]
	\item $p_{C,F}(x;\theta)=p_{C',F'}(x;a\theta+b)$ \,\,\,\,\,for all $x\in \Omega$ and all $\theta\in \mathbb{R}$. 
	\item $\psi_{C,F}(\theta)=\psi_{C',F'}(a\theta+b)+c\theta+d$ \,\,\,\,\,for all $\theta\in \mathbb{R}$. 
	\end{enumerate}
\end{lemma}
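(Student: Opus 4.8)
The plan is to prove the equivalence by direct computation in both directions, and then to derive (i) and (ii) as byproducts of the forward direction. First I would unwind the definition of equivalence (Definition \ref{kwwdefksn}): $\mathcal{E}_{C,F}\sim \mathcal{E}_{C',F'}$ means that the two one-parameter families of probability functions on $\Omega$ coincide as \emph{sets}. Since each family is a genuine one-parameter curve (the parametrization $\theta\mapsto p_{C,F}(\,\cdot\,;\theta)$ is injective, because $F\notin \mathbb{R}\cdot\mathds{1}$), coincidence of the sets means there is a reparametrization: a bijection $\sigma:\mathbb{R}\to\mathbb{R}$ with $p_{C,F}(x;\theta)=p_{C',F'}(x;\sigma(\theta))$ for all $x\in\Omega$, $\theta\in\mathbb{R}$. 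Taking logarithms, this reads
\begin{eqnarray*}
  C(x)+\theta F(x)-\psi_{C,F}(\theta)=C'(x)+\sigma(\theta)F'(x)-\psi_{C',F'}(\sigma(\theta))
\end{eqnarray*}
for every $x\in\Omega$ and $\theta\in\mathbb{R}$.

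For the main (forward) direction, I would exploit this identity by differencing over $x$ to kill the $\psi$ terms. Fix two points $x_i,x_j\in\Omega$ with $F(x_i)\neq F(x_j)$ (possible since $F$ is nonconstant); subtracting the identity at $x_i$ and $x_j$ gives $\big(C(x_i)-C(x_j)\big)+\theta\big(F(x_i)-F(x_j)\big)=\big(C'(x_i)-C'(x_j)\big)+\sigma(\theta)\big(F'(x_i)-F'(x_j)\big)$. If additionally $F'(x_i)\neq F'(x_j)$, this shows $\sigma$ is affine, say $\sigma(\theta)=a\theta+b$ with $a\neq 0$; one checks $F'(x_i)\neq F'(x_j)$ must hold (else the left side varies with $\theta$ but the right side is constant, forcing $F(x_i)=F(x_j)$, a contradiction). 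Plugging $\sigma(\theta)=a\theta+b$ back into the logarithmic identity and collecting the coefficient of $\theta$ and the constant term (as functions of $x$) yields $F(x)=aF'(x)+\big(\text{const}\big)$ and $C(x)=C'(x)+bF'(x)+\big(\text{const}\big)$, where the two constants are independent of $x$; writing them as $c$ and $d$ respectively gives exactly $(C,F)=g\cdot(C',F')$ for $g=\big[\begin{smallmatrix}1&b&d\\0&a&c\\0&0&1\end{smallmatrix}\big]\in G$. The relation $\psi_{C,F}(\theta)=\psi_{C',F'}(a\theta+b)+c\theta+d$, which is (ii), then drops out of the same identity by reading off the ``$x$-free'' part; and (i) is just the statement $\sigma(\theta)=a\theta+b$ re-expressed.

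For the converse, suppose $(C,F)=g\cdot(C',F')$, i.e.\ $C=C'+bF'+d\mathds{1}$ and $F=aF'+c\mathds{1}$ with $a\neq 0$. A direct substitution into the formula $\psi_{C,F}(\theta)=\ln\big(\sum_k \exp(C(x_k)+\theta F(x_k))\big)$ gives $\psi_{C,F}(\theta)=\ln\big(\sum_k\exp(C'(x_k)+(a\theta+b)F'(x_k))\big)+c\theta+d=\psi_{C',F'}(a\theta+b)+c\theta+d$, which is (ii); substituting this and the expressions for $C,F$ into $p_{C,F}(x;\theta)=\exp\{C(x)+\theta F(x)-\psi_{C,F}(\theta)\}$ collapses everything to $p_{C',F'}(x;a\theta+b)$, which is (i), and in particular the two families coincide, so $\mathcal{E}_{C,F}\sim\mathcal{E}_{C',F'}$.

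The step I expect to be the main (if modest) obstacle is the rigorous argument that the reparametrization $\sigma$ exists and is affine: one must use that $\Omega$ is finite and $F,F'$ are genuinely nonconstant to guarantee the existence of an index pair $x_i,x_j$ on which \emph{both} $F$ and $F'$ separate points, and then handle the bookkeeping of which constants depend on $x$ and which do not. Everything else is routine algebra with the exponential-family normalization, and the fact that $G$ is literally designed so that its action on $(C,F)$ mirrors the affine reparametrization $\theta\mapsto a\theta+b$ together with the additive freedom $C\mapsto C+bF+d\mathds{1}$ makes the final identification immediate.
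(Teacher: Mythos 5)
Your proposal is correct and follows essentially the same route as the paper's proof: both extract a reparametrization bijection from the injectivity of the $\theta$-parametrization, show it is affine by subtracting the logarithmic identity at two points where $F'$ (resp.\ $F$) separates, then separate the $x$- and $\theta$-dependence to read off $F=aF'+c\mathds{1}$, $C=C'+bF'+d\mathds{1}$ and (ii), with the converse by direct substitution. The only cosmetic difference is that the paper extracts the constants $c$ and $d$ by differentiating in $\theta$ and integrating, whereas you compare coefficients of affine functions of $\theta$; this is the same separation-of-variables idea.
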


\begin{remark}\label{nefkwnkneknk}
	A consequence of the Lemma \ref{nknknkn} is that equivalent exponential families are isomorphic. More precisely, if 
	$\mathcal{E}_{C,F}\sim \mathcal{E}_{C',F'}$, and if $(C,F)=g\cdot (C',F')$, where
	$g=\Big[\begin{smallmatrix}
		1&b&d\\
		0&a&c\\
		0&0&1
	\end{smallmatrix} \Big]\in G$, then the map 
	\begin{eqnarray*}
		\mathcal{E}_{C,F}\to \mathcal{E}_{C',F'}, \,\,\,\,\,\,\,\,p_{C,F}(\,.\,;\theta)\mapsto p_{C',F'}(\,.\,;a\theta+b), 
	\end{eqnarray*}
	is an isomorphism of dually flat manifolds. (To see that it is an isometry, use (ii) and 
	the fact that the matrix representation of the Fisher metric is the Hessian of the log partition function.)
\end{remark}

\begin{definition}\label{nckwdknknk}
	Let $\mathcal{E}=\mathcal{E}_{C,F}$ be a 1-dimensional exponential family defined over a finite set 
	$\Omega=\{x_{0},...,x_{m}\}$. Let $\{\alpha_{i}\}_{i=0,...,p}$ and $\{\omega_{i}\}_{i=0,...,p}$ be the families 
	of real numbers characterized by the following conditions: 
	\begin{enumerate}[(i)]
		\item $F(\Omega)=\{\alpha_{0},...,\alpha_{p}\}$ and $\alpha_{0}<...<\alpha_{p}$, 
		\item $e^{\omega_{i}}=\sum_{x\in F^{-1}(\alpha_{i})}e^{C(x)}$. 
	\end{enumerate}
	Let $\Omega_{red}=\{0,1,...,p\}$. Define $C_{red},F_{red}:\Omega_{red}\to \mathbb{R}$ by 
	\begin{eqnarray*}
		F_{red}(k)=\alpha_{k} \,\,\,\,\,\,\,\textup{and} \,\,\,\,\,\,\, C_{red}(k)=\omega_{k},
	\end{eqnarray*}
	where $k\in \{0,...,p\}$. Then $\mathcal{E}_{red}:=\mathcal{E}_{C_{red},F_{red}}$ is a 1-dimensional 
	exponential family defined over $\Omega_{red}$. We call it the \textit{reduced exponential family} of 
	$\mathcal{E}$. 
\end{definition}
\begin{remark}\label{nrvkknkwenken}
	If $\mathcal{E}=\mathcal{E}_{C,F}$, then 
	the map $\mathcal{E}\to \mathcal{E}_{red}$, $p_{C,F}(\,.\,;\theta)\mapsto p_{C_{red},F_{red}}(\,.\,;\theta)$ 
	is an isomorphism of dually flat manifolds. This follows from the formula 
	$\psi_{C,F}(\theta)=\psi_{C_{red},F_{red}}(\theta)$ ($\theta\in \mathbb{R}$) and the 
	fact that the matrix representation of the Fisher metric is the Hessian of 
	the log partition function.
\end{remark}

	Recall that $\mathcal{B}(n)$ denotes the set of all binomial distributions 
	$p(k)=\binom{n}{k}q^{k}(1-q)^{n-k}$, $q\in (0,1)$,
	defined over $\Omega=\{0,1,...,n\}$.
	
\begin{proposition}[\cite{Molitor-Hessian}, Proposition 7]\label{nkdnkenkknk}
	Let $\mathcal{E}=\mathcal{E}_{C,F}$ be a 1-dimensional exponential family defined over a finite set 
	$\Omega=\{x_{0},...,x_{m}\}$. The following are equivalent.
	\begin{enumerate}[(i)]
	\item The Hessian sectional curvature of $\mathcal{E}$ is constant. 
	\item $\mathcal{E}_{red}\sim \mathcal{B}(p)$, where $p+1$ is the cardinality of $\Omega_{red}$. 
	\end{enumerate}
	If any one of the conditions holds, then the Hessian sectional curvature of $\mathcal{E}$ is $-\tfrac{1}{p}$. 
\end{proposition}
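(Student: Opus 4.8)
The plan is to reduce to the reduced family $\mathcal{E}_{red}$ and then exploit the fact that, in dimension 1, the Hessian sectional curvature of an exponential family is governed by its \emph{variance function}: the variance of $F$ viewed as a function of the mean of $F$. Throughout I use the natural parameter $\theta$, which is an affine coordinate for $\nabla^{(e)}$, and the fact that in this coordinate the Fisher metric component is $g(\theta)=\psi''(\theta)$.

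\textbf{Step 1 (reduction and the variance function).} Since $\psi_{C,F}(\theta)=\psi_{C_{red},F_{red}}(\theta)$ for all $\theta$ (Remark \ref{nrvkknkwenken}), the two families have the same metric component $\psi''(\theta)$ in the $\theta$-coordinate, so by Proposition \ref{nfendwnfeknkdwnkn} they have the same Hessian sectional curvature function. I may therefore assume $\mathcal{E}=\mathcal{E}_{red}$, i.e. $\Omega=\{0,\dots,p\}$, $F(k)=\alpha_k$ with $\alpha_0<\dots<\alpha_p$, $c_k:=e^{\omega_k}>0$, and $Z(\theta):=e^{\psi(\theta)}=\sum_{k=0}^{p}c_k e^{\alpha_k\theta}$, where $p\geq 1$. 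Now introduce $\phi:=\psi'$ (the mean of $F$) as a new coordinate; this is legitimate since $d\phi/d\theta=\psi''=g>0$, and a short chain-rule computation from the formula of Proposition \ref{nfendwnfeknkdwnkn} shows that $S=\tfrac{1}{2}\,d^2g/d\phi^2$, where now $g$ is regarded as a function of $\phi$. Hence $S\equiv\lambda$ is equivalent to $g$ being a quadratic polynomial in $\phi$ with leading coefficient $\lambda$.

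\textbf{Step 2 (boundary behaviour and the ODE).} As $\theta\to+\infty$ (resp. $\theta\to-\infty$) the distribution $p_\theta$ concentrates on $k=p$ (resp. $k=0$), so $\phi\to\alpha_p$ (resp. $\alpha_0$) while $g=\mathrm{Var}_{p_\theta}(F)\to 0$. Thus the quadratic $V$ with $g=V(\phi)$ vanishes at the two distinct points $\alpha_0,\alpha_p$, so $V(\phi)=\lambda(\phi-\alpha_0)(\phi-\alpha_p)$, and positivity of $V$ on $(\alpha_0,\alpha_p)$ forces $\lambda<0$ (in particular $\lambda=0$ is impossible). Now $\phi$ solves the separable ODE $d\phi/d\theta=\lambda(\phi-\alpha_0)(\phi-\alpha_p)$; integrating gives $\dfrac{\alpha_p-\phi}{\phi-\alpha_0}=K e^{\ell\theta}$ with $\ell:=\lambda(\alpha_p-\alpha_0)<0$ and $K>0$, and integrating $\psi'=\phi$ once more yields $Z(\theta)=A\,e^{\alpha_p\theta}\bigl(1+K e^{\ell\theta}\bigr)^{-1/\lambda}$ for some $A>0$.

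\textbf{Step 3 (integrality and the match with $\mathcal{B}(p)$).} Since $Z$ is a sum of exactly $p+1$ exponential functions with pairwise distinct exponents, and such a representation is unique, the generalized binomial series of $(1+K e^{\ell\theta})^{-1/\lambda}$ must terminate; this forces $-1/\lambda\in\mathbb{Z}_{\geq 0}$, and comparing the number of terms gives $-1/\lambda=p$, i.e. $\lambda=-1/p$. Reading off the expansion then gives $\alpha_k=\alpha_0+\tfrac{\alpha_p-\alpha_0}{p}k$ and $c_k=A\binom{p}{k}K^{\,p-k}$, so $F_{red}$ is affine in $k$ and $C_{red}(k)-\ln\binom{p}{k}$ is affine in $k$; hence there is a matrix in $G$ carrying the defining data $C(k)=\ln\binom{p}{k}$, $F(k)=k$ of $\mathcal{B}(p)$ (Example \ref{exa:5.6}) to $(C_{red},F_{red})$, and Lemma \ref{nknknkn} gives $\mathcal{E}_{red}\sim\mathcal{B}(p)$. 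For the converse, a direct computation with $\psi_{\mathcal{B}(p)}(\theta)=p\ln(1+e^\theta)$ and Proposition \ref{nfendwnfeknkdwnkn} shows that $\mathcal{B}(p)$ has constant Hessian sectional curvature $-1/p$; and if $\mathcal{E}_{red}\sim\mathcal{B}(p)$, then $\mathcal{E}\cong\mathcal{E}_{red}\cong\mathcal{B}(p)$ as dually flat manifolds (Remarks \ref{nrvkknkwenken} and \ref{nefkwnkneknk}), so the curvature of $\mathcal{E}$ is also constant and equal to $-1/p$. I expect the integrality step to be the main obstacle: it is where the explicit solution $Z(\theta)=A e^{\alpha_p\theta}(1+Ke^{\ell\theta})^{-1/\lambda}$ must be forced to interact with the combinatorial constraint that $Z$ is a finite exponential sum with exactly $p+1$ terms, pinning $-1/\lambda$ down to the integer $p$.
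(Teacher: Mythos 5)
Your overall route is sound and, in outline, parallels the proof in \cite{Molitor-Hessian}: derive from constancy of $S$ a first-order quadratic ODE for the mean parameter, integrate it to obtain $e^{\psi}=A\,e^{\alpha_{p}\theta}\bigl(1+Ke^{\ell\theta}\bigr)^{-1/\lambda}$, force $-1/\lambda$ to be a nonnegative integer from the fact that $e^{\psi}$ is a finite sum of exponentials with distinct frequencies, match the resulting binomial expansion against $\sum_{k}e^{\alpha_{k}\theta+\omega_{k}}$, and finish with the $G$-action via Lemma \ref{nknknkn}. Your variance-function formulation $S=\tfrac{1}{2}V''(\phi)$, with the two roots of the quadratic pinned at $\alpha_{0}$ and $\alpha_{p}$ by the boundary behaviour of the mean, is a genuine streamlining: it replaces the discriminant case analysis (and the separate identification of the coefficients $a,b$ from the limits of $\eta$) carried out in the cited paper. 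Your converse, by direct computation for $\mathcal{B}(p)$ together with invariance of the Hessian sectional curvature under isomorphisms of dually flat manifolds (Remarks \ref{nefkwnkneknk} and \ref{nrvkknkwenken}), is also fine.

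The one genuine gap is the integrality step, which you yourself flag: the claim that ``the generalized binomial series of $(1+Ke^{\ell\theta})^{-1/\lambda}$ must terminate'' does not follow from the uniqueness of representations as \emph{finite} exponential sums. Uniqueness compares two finite representations; what has to be excluded here is that $(1+e^{x})^{\beta}$, $\beta=-1/\lambda$, admits \emph{any} finite exponential representation when $\beta\notin\mathbb{N}$ (and the binomial series only converges for $e^{x}<1$, so ``termination'' is not even the relevant dichotomy on all of $\mathbb{R}$). This is precisely where the cited paper inserts a dedicated lemma: if $\beta\notin\mathbb{N}$, the span of $f,f',\dots,f^{(k)}$ with $f(x)=(1+e^{x})^{\beta}$ contains the $k+1$ linearly independent functions $(1+e^{x})^{\beta-l}e^{lx}$, $0\le l\le k$, hence has dimension at least $k+1$, while a finite exponential sum generates a derivative-closed space of dimension at most $p+1$ --- a contradiction for large $k$. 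Alternatively, writing $u=Ke^{\ell\theta}$ so that $(1+u)^{\beta}=\sum_{k}d_{k}u^{\mu_{k}}$ on $(0,\infty)$: for $n$ exceeding all $\mu_{k}$, the $n$-th $u$-derivative of the right-hand side is either identically zero (if every $\mu_{k}\in\mathbb{Z}_{\ge0}$) or unbounded as $u\to0^{+}$, whereas $\tfrac{d^{n}}{du^{n}}(1+u)^{\beta}\to\beta(\beta-1)\cdots(\beta-n+1)$, which forces $\beta\in\mathbb{Z}_{\ge0}$ in either case. Once this step is supplied, your term count giving $\beta=p$, the identification of $\alpha_{k}$ and $c_{k}$, and the matrix in $G$ complete the argument as you describe.
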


\section{Dually flat models}\label{nceknwkenfknk}

	The purpose of this section is to show that every complete and simply connected 
	1-dimensional complex space form can be realized as the regular 
	torification of an appropriate dually flat manifold (see Proposition \ref{nekwnkendkwnkn}). 
	Let $c$ be a real number and let $M_{c}$ be the manifold defined by 
	\begin{eqnarray*}
		M_{c}=
		\left\lbrace
		\begin{array}{lll}
			\mathbb{R} &\textup{if}& c\geq 0, \\
			(-\infty,0) &\textup{if}& c<0. 
		\end{array}
		\right.
	\end{eqnarray*}
	Let $\theta$ denote the usual linear coordinate on $\mathbb{R}$ (or an open subset of it), and 
	let $h_{c}$ be the Riemannian metric on $M_{c}$ whose local expression in the coordinate $\theta$ is given by 
	\begin{eqnarray}\label{nveknkefnkn}
		h_{c}(\theta)=
		\left\lbrace
		\begin{array}{lll}
			\tfrac{1}{c\cosh^{2}(\theta)}   &\textup{if}& c > 0,  \\[0.5em]
			 e^{\theta}                     &\textup{if}& c=0,   \\[0.5em]
			\tfrac{-1}{c\sinh^{2}(\theta)}  &\textup{if}& c<0.
		\end{array}
		\right.
	\end{eqnarray}
%	Note that $h_{c}(\theta)=\tfrac{\partial^{2}\psi_{c}}{\partial \theta^{2}}(\theta)$ for all $\theta\in M_{c}$, 
%	where the potential $\psi_{c}:M_{c}\to \mathbb{R}$ is defined by 
%	\begin{eqnarray*}
%		\psi_{c}(\theta)=
%		\left\lbrace
%		\begin{array}{lll}
%			\tfrac{1}{c}\ln(\cosh(\theta))   &\textup{if}& c > 0,  \\[0.5em]
%			 e^{\theta}                      &\textup{if}& c=0,   \\[0.5em]
%			\tfrac{1}{c}\ln(\sinh(-\theta))  &\textup{if}& c<0.
%		\end{array}
%		\right.
%	\end{eqnarray*}

	Let $\nabla$ be the flat connection on $M_{c}\subseteq \mathbb{R}$. 

\begin{lemma}\label{ncknknkefnknknk}
	$(M_{c},h_{c},\nabla)$ is a dually flat manifold whose Hessian sectional curvature is constant and equal to $-c$. 
\end{lemma}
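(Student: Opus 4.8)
The plan is to verify the two assertions in the statement separately, both by direct computation in the global affine coordinate $\theta$. The first assertion, that $(M_c,h_c,\nabla)$ is a dually flat manifold, is almost automatic: since $M_c$ is an open subset of $\mathbb{R}$ with the standard flat connection $\nabla$, the coordinate $\theta$ is a global affine coordinate, so $\nabla$ is flat and torsion-free. In dimension $1$ any metric is trivially a Hessian metric locally (one solves $\varphi'' = h_c$ by integrating twice, which is possible since $h_c$ is smooth), so by the Lemma in Section \ref{sec:1} identifying Hessian and dually flat structures, $(M_c,h_c,\nabla)$ is dually flat. Alternatively, one notes that in dimension $1$ the dual connection $\nabla^*$ is automatically flat and torsion-free because every connection on a $1$-manifold is torsion-free and flatness is a statement about the (vanishing) curvature $2$-form.

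For the curvature assertion I would invoke Proposition \ref{nfendwnfeknkdwnkn}, which gives the explicit formula
\begin{eqnarray*}
	S = \frac{1}{2h_c}\,\frac{\partial^2 \ln(h_c)}{\partial \theta^2}
\end{eqnarray*}
for the Hessian sectional curvature in dimension $1$, where $h_c(\theta) = h_c(\tfrac{\partial}{\partial\theta},\tfrac{\partial}{\partial\theta})$. The task then reduces to three elementary calculations, one for each branch of \eqref{nveknkefnkn}. For $c=0$: $\ln(h_0) = \theta$, so $\partial_\theta^2 \ln(h_0) = 0$ and $S = 0 = -c$. For $c>0$: $\ln(h_c) = -\ln c - 2\ln\cosh\theta$, so $\partial_\theta \ln(h_c) = -2\tanh\theta$ and $\partial_\theta^2\ln(h_c) = -2\operatorname{sech}^2\theta = -2/\cosh^2\theta$; dividing by $2h_c = 2/(c\cosh^2\theta)$ gives $S = -c$. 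For $c<0$: $\ln(h_c) = \ln(-1/c) - 2\ln\sinh\theta$ (well-defined since $\theta \in (-\infty,0)$ forces $\sinh\theta < 0$, but $\sinh^2\theta>0$, so really $\ln(h_c) = \ln(-1/c) - \ln\sinh^2\theta$), so $\partial_\theta \ln(h_c) = -2\coth\theta$ and $\partial_\theta^2 \ln(h_c) = 2\operatorname{csch}^2\theta = 2/\sinh^2\theta$; dividing by $2h_c = -2/(c\sinh^2\theta)$ gives $S = -c$. Since $S$ is constant on the connected set $M_c$ in each case, $(M_c,h_c,\nabla)$ is a Hessian manifold of constant Hessian sectional curvature $-c$.

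There is no real obstacle here — the statement is a verification. The only point requiring a small amount of care is the sign bookkeeping in the hyperbolic case, where one must remember that $\theta$ ranges over $(-\infty,0)$ so that $\sinh\theta$ is negative while $\sinh^2\theta$ is positive; writing $\ln(h_c)$ in terms of $\ln(\sinh^2\theta)$ rather than $\ln(\sinh\theta)$ avoids any issue with complex logarithms. One should also note, as the remark immediately following Proposition \ref{nkwknkrnkwdnknkn} already points out, that the metrics in \eqref{nveknkefnkn} are exactly the special cases $a=1$, $b=0$ of the general forms in parts (b) and (c) of that Proposition with $\lambda = -c$ (hyperbolic cosine branch for $\lambda<0$, i.e.\ $c>0$; hyperbolic sine branch for $\lambda>0$, i.e.\ $c<0$), so the computation is consistent with the classification recalled there and could in principle be cited rather than redone.
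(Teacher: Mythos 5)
Your proposal is correct and follows exactly the paper's route: the paper's own proof is just ``By a direct verification using Proposition \ref{nfendwnfeknkdwnkn}'', and your case-by-case computation of $\tfrac{1}{2h_c}\partial^{2}_{\theta}\ln(h_c)$ (together with the easy observation that dually flatness is automatic in dimension $1$) is precisely that verification, with correct sign bookkeeping in all three branches. One inessential slip: the remark after Proposition \ref{nkwknkrnkwdnknkn} only states that $a>0$ when $\lambda\neq 0$, not the identification of \eqref{nveknkefnkn} with the cases $a=1$, $b=0$ of that classification, but this aside does not affect your argument.
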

\begin{proof}
	By a direct verification using Proposition \ref{nfendwnfeknkdwnkn}. 
\end{proof}

\begin{example}
	The exponential families $\mathcal{B}(n)$, $\mathscr{P}$ and $\mathcal{NB}(r)$ (regarded as dually flat manifolds)
	are isomorphic to $M_{1/n}$, $M_{0}$ and $M_{-1/r}$, respectively. To see this, compute the Hessian of each 
	log partition function and compare with \eqref{nveknkefnkn}. 
\end{example}

	Since $(M_{c},h_{c},\nabla)$ is dually flat, the tangent bundle $TM_{c}$ is naturally a K\"{a}hler manifold by 
	Dombrowski's construction. Let $g_{c}$ and $\omega_{c}$ be the corresponding K\"{a}hler metric and form on $TM_{c}$. 

	We will identify the tangent bundle of $M_{c}$ with $M_{c}\times \mathbb{R}\subseteq \mathbb{C}$ via the map 
	$y\tfrac{\partial}{\partial \theta}\big\vert_{x}\mapsto z=x+iy$. Then 
	\begin{eqnarray*}
		&& (g_{c})_{z}(u,v)=h_{c}(x)(u_{1}v_{1}+u_{2}v_{2})\\
		&\textup{and}&(\omega_{c})_{z}(u,v)= h_{c}(x)(u_{1}v_{2}-u_{2}v_{1}), 
	\end{eqnarray*}
	where $z=x+iy,u=u_{1}+iu_{2},v=v_{1}+iv_{2}$, $x,u_{1},v_{1}\in M_{c}$ and $y,u_{2},v_{2}\in \mathbb{R}$. 

	Let $F(c)$ be a complete, connected and simply connected 1-dimensional complex space form of 
	constant holomorphic sectional curvature $c$. Thus 
	\begin{eqnarray*}
		F(c)=
		\left\lbrace
		\begin{array}{lll}
			\mathbb{D}(c)                    &\textup{if}& c<0,   \\[0.5em]
			\mathbb{C}                    &\textup{if}& c=0,   \\[0.5em]
			\mathbb{P}_{1}(c) &\textup{if}& c > 0,
		\end{array}
		\right.
	\end{eqnarray*}
	where $\mathbb{D}(c)=\{z\in \mathbb{C}\,\,\vert\,\,|z|<1\}$ is the unit disk endowed with the Hyperbolic metric 
	$ds^2= -4c^{-1}\left(dx^{2}+dy^{2} \right)(1-x^{2}-y^{2})^{-2}$, and $\mathbb{P}_{1}(c)$ is 
	the complex projective space of complex dimension 1 and holomorphic sectional curvature $c>0$.

	Let $\mathbb{T}=\mathbb{R}/\mathbb{Z}$ be the real torus of dimension $1$. Given $t\in \mathbb{R}$, we will 
	denote by $[t]$ the equivalence class of $t$ in $\mathbb{T}$. Let 
	$\Phi_{c}:\mathbb{T}\times F(c)\to F(c)$ be the torus action defined via the formulas:
	\begin{eqnarray*}
		\left\lbrace
		\begin{array}{lll}
			\Phi_{c}([t],z)=e^{2i\pi t}z                           & \textup{if} & c\leq 0, \\[0.5em]
			\Phi_{c}([t],[z_{0},z_{1}])=[e^{2i\pi t}z_{0},z_{1}]    & \textup{if} & c> 0.
		\end{array}
		\right.
	\end{eqnarray*}

	We will use $F(c)^{\circ}$ to denote the set of points $p\in F(c)$ where the action $\Phi_{c}$ is free.
	Note that $F(c)^{\circ}=F(c)-\{0\}$ if $c \leq 0$ and $F(c)^{\circ}=
	\{[z_{0},z_{1}]\in\mathbb{P}_{1}(c)\,\,\vert\,\,z_{0}z_{1}\neq 0\}$ if $c>0$.

\begin{proposition}\label{nekwnkendkwnkn}
	Given $c\in \mathbb{R}$, the space form $F(c)$, equipped with the torus action $\Phi_{c}$, is a regular torification 
	of $(M_{c},h_{c},\nabla)$.
\end{proposition}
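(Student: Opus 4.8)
The plan is to verify condition (2) of Proposition \ref{nkwdnkkfenknk}. First I would check that $F(c)$ is \emph{regular}: in each of the three cases $\mathbb{D}(c)$, $\mathbb{C}$, $\mathbb{P}_{1}(c)$ is connected, simply connected, geodesically complete, and carries a real-analytic K\"{a}hler metric, so this is immediate. It then remains to exhibit a holomorphic and isometric covering map $\tau:TM_{c}\to F(c)^{\circ}$, a parallel lattice $L\subset TM_{c}$ with respect to $\nabla$, and a generator $X$ of $L$, satisfying $\Gamma(L)=\textup{Deck}(\tau)$ and $\tau\circ T_{t}=(\Phi_{c})_{[t]}\circ \tau$ for all $t\in\mathbb{R}$. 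Throughout I use the identification $TM_{c}=M_{c}\times\mathbb{R}\subseteq\mathbb{C}$, $y\tfrac{\partial}{\partial\theta}\big\vert_{x}\leftrightarrow z=x+iy$, under which (as recorded just before the statement) $(g_{c})_{z}=h_{c}(\textup{Re}\,z)\,|dz|^{2}$; here $\tfrac{\partial}{\partial\theta}$ is $\nabla$-parallel, so for every $\lambda>0$ the frame $X=\lambda\tfrac{\partial}{\partial\theta}$ generates a parallel lattice $L_{\lambda}$ whose translation group $\Gamma(L_{\lambda})$ acts on $TM_{c}$ by $z\mapsto z+ik\lambda$ ($k\in\mathbb{Z}$), and whose associated $\mathbb{R}$-action is $T_{t}(z)=z+i\lambda t$.

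Next I would write down $\tau$ explicitly. For $c<0$, put $\tau(z)=e^{z}$; since $TM_{c}=\{\textup{Re}\,z<0\}$, this maps onto the punctured disk $F(c)^{\circ}=\mathbb{D}(c)-\{0\}$. For $c=0$, put $\tau(z)=2e^{z/2}$, which maps $TM_{0}=\mathbb{C}$ onto $F(0)^{\circ}=\mathbb{C}-\{0\}$. For $c>0$, identify $F(c)^{\circ}=\{[z_{0},z_{1}]\,\vert\,z_{0}z_{1}\neq 0\}$ with $\mathbb{C}^{*}$ through $[z_{0},z_{1}]\mapsto z_{0}/z_{1}$ and put $\tau(z)=[e^{z},1]$. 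In every case $\tau$ is holomorphic and, being (after these identifications) a restriction of the exponential map to an open set on which it is a covering onto its image, a covering map onto $F(c)^{\circ}$ whose deck group is generated by $z\mapsto z+2\pi i$ when $c\neq 0$ and by $z\mapsto z+4\pi i$ when $c=0$. Taking $\lambda=2\pi$ for $c\neq 0$, $\lambda=4\pi$ for $c=0$, and $X=\lambda\tfrac{\partial}{\partial\theta}$, gives $\Gamma(L_{\lambda})=\textup{Deck}(\tau)$; and the identity $\tau(z+i\lambda t)=(\Phi_{c})_{[t]}(\tau(z))$ follows by direct substitution from $e^{z+2\pi i t}=e^{2\pi i t}e^{z}$ (resp. $2e^{(z+4\pi i t)/2}=e^{2\pi i t}\cdot 2e^{z/2}$), which is condition (ii).

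The one substantive point is that $\tau$ is an isometry, i.e. $\tau^{*}g_{F(c)}=g_{c}$. This is a direct pullback computation using $(g_{c})_{z}=h_{c}(\textup{Re}\,z)\,|dz|^{2}$ with $h_{c}$ from \eqref{nveknkefnkn}, together with the standard local forms of the target metrics: $|dw|^{2}$ on $\mathbb{C}$, the hyperbolic metric $-4c^{-1}|dw|^{2}(1-|w|^{2})^{-2}$ on $\mathbb{D}(c)$, and the Fubini--Study metric $4c^{-1}|dw|^{2}(1+|w|^{2})^{-2}$ on the affine chart of $\mathbb{P}_{1}(c)$. Writing $q=\textup{Re}\,z$, one finds that $\tau^{*}$ of the target metric equals $-4c^{-1}e^{2q}(1-e^{2q})^{-2}|dz|^{2}=\tfrac{-1}{c\sinh^{2}q}|dz|^{2}$ for $c<0$, equals $4c^{-1}e^{2q}(1+e^{2q})^{-2}|dz|^{2}=\tfrac{1}{c\cosh^{2}q}|dz|^{2}$ for $c>0$, and equals $e^{q}|dz|^{2}$ for $c=0$ --- in each case exactly $g_{c}$. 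I expect this to be the main (though entirely routine) step; the only delicate aspect is that the constants $\lambda$ and the factor in $\tau(z)=2e^{z/2}$ are forced, since they must be chosen so that the isometry condition and the deck-group/equivariance conditions hold simultaneously. Once these are in place, Proposition \ref{nkwdnkkfenknk} gives that $\Phi_{c}$ is a torification of $(M_{c},h_{c},\nabla)$, and it is regular because $F(c)$ is. (As a sanity check, by Lemma \ref{ncknknkefnknknk} and Proposition \ref{nefknkekfwnk} the K\"{a}hler manifold $TM_{c}$ already has constant holomorphic sectional curvature $c$, consistent with $\tau$ being a local isometry onto a complex space form.)
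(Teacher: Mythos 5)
Your proposal is correct and follows essentially the same route as the paper: the same maps $\tau_{c}$ ($e^{z}$, $2e^{z/2}$, $[e^{z},1]$), the same lattice generated by $2\pi\epsilon(c)\tfrac{\partial}{\partial\theta}$, and the verification of condition (2) of Proposition \ref{nkwdnkkfenknk} via the deck-group and equivariance identities. The only (minor) difference is in the case $c>0$, where you check the isometry directly from the affine-chart expression $\tfrac{4}{c}|dw|^{2}(1+|w|^{2})^{-2}$ of the Fubini--Study metric, whereas the paper pulls back the Fubini--Study form through the Hopf fibration; both computations yield $\tfrac{1}{c\cosh^{2}(\textup{Re}\,z)}|dz|^{2}=g_{c}$.
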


	The rest of this section is devoted to the proof of Proposition \ref{nekwnkendkwnkn}. First we need some notation. 
	Let $\tau_{c}:TM_{c}\to F(c)^{\circ}$ be the map defined by 
	\begin{eqnarray*}
		\tau_{c}(z)=
		\left\lbrace
		\begin{array}{lll}
			e^{z}     & \textup{if} & c< 0,\\[0.3em]
			2e^{z/2}   & \textup{if} & c= 0,\\[0.3em]
			\textit{}[e^{z},1] & \textup{if} & c>0.
		\end{array}
		\right.
	\end{eqnarray*}

	Let $\epsilon:\mathbb{R}\to \{1,2\}$ be the function defined by $\epsilon(c)=2$ if $c=0$ and 
	$\epsilon(c)=1$ otherwise.

\begin{lemma}\label{ndkenknfedkwnk}
	Let $c\in \mathbb{R}$ be arbitrary. The map $\tau_{c}$ is a K\"{a}hler covering map whose Deck transformation group is 
	$\textup{Deck}(\tau_{c})=\{\varphi_{k}\,\,\vert\,\,
	k\in \mathbb{Z}\}$, where $\varphi_{k}:TM_{c}\to TM_{c}$ is defined by $\varphi_{k}(z)= z+2ki\pi \epsilon(c)$.
\end{lemma}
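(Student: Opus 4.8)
The plan is to treat the three cases $c<0$, $c=0$, $c>0$ separately, but with a uniform strategy: first identify the holomorphic structure on $TM_c$ explicitly via Dombrowski's construction, then check that $\tau_c$ is holomorphic and a local isometry, then identify its fibers with the orbits of the stated group $\{\varphi_k\}$, and finally invoke a standard covering-space criterion. Recall that under the identification $TM_c\cong M_c\times\mathbb{R}\subseteq\mathbb{C}$ the complex coordinate $z=x+iy$ is exactly the holomorphic coordinate coming from the affine coordinate $\theta$ on $M_c$ (this is Dombrowski's construction as recalled in Section~\ref{nkwwnkwnknknfkn}), so holomorphy of $\tau_c$ reduces to checking that $z\mapsto e^z$, $z\mapsto 2e^{z/2}$, $z\mapsto[e^z,1]$ are holomorphic in $z$, which is immediate.

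First I would verify surjectivity and describe the fibers. For $c<0$: $z=x+iy$ with $x<0$, so $e^z$ ranges over $\{w\in\mathbb{C}\ :\ 0<|w|<1\}=\mathbb{D}(c)^\circ=F(c)^\circ$, and $e^{z}=e^{z'}$ iff $z-z'\in 2\pi i\mathbb{Z}$, i.e.\ $z'=\varphi_k(z)$ with $\epsilon(c)=1$. For $c=0$: $x\in\mathbb{R}$, so $2e^{z/2}$ ranges over all of $\mathbb{C}\setminus\{0\}=F(0)^\circ$, and $2e^{z/2}=2e^{z'/2}$ iff $z-z'\in 4\pi i\mathbb{Z}$, i.e.\ $z'=\varphi_k(z)$ with $\epsilon(0)=2$. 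For $c>0$: $x\in\mathbb{R}$, so $[e^z,1]$ ranges over $\{[z_0,z_1]\ :\ z_0z_1\neq 0\}=\mathbb{P}_1(c)^\circ=F(c)^\circ$, and $[e^z,1]=[e^{z'},1]$ iff $e^z=e^{z'}$ iff $z-z'\in 2\pi i\mathbb{Z}$, again $\epsilon(c)=1$. In each case the map is a holomorphic local diffeomorphism (the derivative of $e^z$, $2e^{z/2}$, or the chart expression of $[e^z,1]$ never vanishes) with fibers exactly the orbits of the free, properly discontinuous $\mathbb{Z}$-action $\varphi_k$; hence $\tau_c$ is a holomorphic covering map and $\mathrm{Deck}(\tau_c)=\{\varphi_k\mid k\in\mathbb{Z}\}$.

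The remaining and genuinely computational point is that $\tau_c$ is a \emph{local isometry}, i.e.\ $\tau_c^*(\text{metric on }F(c)^\circ)=g_c$. Using the explicit form of $g_c$ recalled just before the Proposition, $(g_c)_z$ has conformal factor $h_c(x)$, so it suffices to check that pulling back the relevant conformal factor on $F(c)$ via $\tau_c$ produces $h_c(x)$ with $h_c$ as in \eqref{nveknkefnkn}. For $c=0$: $\tau_0^*(|dw|^2)=|d(2e^{z/2})|^2=e^{x}|dz|^2$, matching $h_0(\theta)=e^\theta$. For $c<0$: the hyperbolic metric on $\mathbb{D}(c)$ has conformal factor $-4c^{-1}(1-|w|^2)^{-2}$; substituting $w=e^z$, $|w|^2=e^{2x}$ and $|dw|^2=e^{2x}|dz|^2$ gives conformal factor $-4c^{-1}e^{2x}(1-e^{2x})^{-2}=-c^{-1}\sinh^{-2}(-x)$ ... and since $h_c(\theta)=-c^{-1}\sinh^{-2}(\theta)$ with $\theta=x<0$ and $\sinh^2$ even, this matches. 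For $c>0$: on the affine chart $w\mapsto[w,1]$ the Fubini--Study metric with holomorphic sectional curvature $c$ has conformal factor $\tfrac{4}{c}(1+|w|^2)^{-2}$; substituting $w=e^z$ yields $\tfrac{4}{c}e^{2x}(1+e^{2x})^{-2}=\tfrac{1}{c}\cosh^{-2}(x)$, matching $h_c(\theta)=(c\cosh^2\theta)^{-1}$. (I would double-check the normalization constants $2$, $4/c$, $-4/c$ against the conventions fixed in the paper; this is where a sign or factor-of-two slip is most likely, and it is the reason for the prefactors $2e^{z/2}$ etc.\ in the definition of $\tau_c$.) With $\tau_c$ shown to be a holomorphic local isometry that is a covering map with the stated Deck group, the lemma follows.
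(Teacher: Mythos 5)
Your proposal is correct, and its overall skeleton (exhibit $\tau_c$ as an exponential-type map, read off surjectivity, fibers and the Deck group from $e^z=e^{z'}\Leftrightarrow z-z'\in 2\pi i\mathbb{Z}$, then verify the local isometry case by case) is the same as the paper's; your explicit treatment of $c=0$ fills in what the paper dismisses as ``analogous'', and your $c<0$ computation reproduces the paper's pullback $\tfrac{-1}{c\sinh^{2}(x)}(dx^{2}+dy^{2})$ exactly. The one genuine divergence is the case $c>0$: you quote the standard affine-chart expression $\tfrac{4}{c}(1+|w|^{2})^{-2}|dw|^{2}$ for the Fubini--Study metric of holomorphic sectional curvature $c$ and substitute $w=e^{z}$, whereas the paper avoids citing that formula by factoring $\tau_{c}=\pi_{H}\circ f$ through the Hopf fibration, using $\pi_{H}^{*}\omega_{FS}=j^{*}\omega_{\textup{can}}$, differentiating $f(z)=(1+e^{2\textup{Re}(z)})^{-1/2}(e^{z},1)$ by hand, and checking the identity at the level of K\"ahler forms ($\tau_{c}^{*}(\tfrac{4}{c}\omega_{FS})=\omega_{c}$) rather than metrics; the two checks are equivalent since $\tau_c$ is holomorphic. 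Your route is shorter and makes the hyperbolic-cosine factor appear immediately, but it imports the curvature normalization of the chart formula as an external fact (precisely the point you flag as needing a double-check); the paper's Hopf-map computation is longer but self-contained, deriving that conformal factor from the canonical symplectic form on $\mathbb{C}^{2}$, so the normalization is verified rather than assumed. Either way the lemma is established.
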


	We begin with the case $c>0$. 
	Let $g_{FS}$ and $\omega_{FS}$ denote the Fubini-Study metric and symplectic 
	form on $\mathbb{P}_{1}(4)$, respectively (thus the corresponding holomorphic sectional curvature on 
	the complex projective space is $4$). Note that $\tfrac{4}{c}g_{FS}$ is the Fubini-Study metric on $\mathbb{P}_{1}(c)$ 
	whose corresponding holomorphic sectional curvature is $c$. 

	Let $S^{3}=\{(z_{1},z_{2})\in \mathbb{C}^{2}\,\,\vert\,\,\,|z_{1}|^{2}+|z_{2}|^{2}=1\}$ be the unit 
	sphere of dimension $3$ in $\mathbb{R}^{4}=\mathbb{C}^{2}$, and let 
	$\pi_{H}:S^{3}\to \mathbb{P}_{1}(4)$ be the Hopf fibration. Thus 
	\begin{eqnarray*}
		\pi_{H}(z_{1},z_{2})=[z_{1},z_{2}] \,\,\,\,\,\,\,\,\,\,(\textup{homogeneous coordinates}). 
	\end{eqnarray*}
	It is well-known that $\pi_{H}^{*}\omega_{FS}=j^{*}\omega_{\textup{can}}$, where $j:S^{3}\hookrightarrow \mathbb{C}^{2}$ 
	is the inclusion map and $\omega_{\textup{can}}$ is the canonical symplectic form on $\mathbb{R}^{4}=\mathbb{C}^{2}$. 
	Recall that if $\langle z,w\rangle=\overline{z}_{1}w_{1}+\overline{z}_{2}w_{2}$ denotes the usual Hermitian 
	product on $\mathbb{C}^{2}$, then 
	\begin{eqnarray*}
	      (\omega_{\textup{can}})_{z}(u,v) & = & \textup{Im}\langle u,v\rangle,	 
	\end{eqnarray*}
	where $z,u,v\in \mathbb{C}^{2}$, and where $\textup{Im}(w)$ is the imaginary part of $w\in \mathbb{C}$. 
\begin{proof}[Proof of Lemma \ref{ndkenknfedkwnk} ($c>0$)]
	Let $U=\{ [z_0, z_1] \in \mathbb{P}_1(c)\,\, \vert \,\, z_1\neq 0\}$, and let $\psi: U   \longrightarrow \mathbb{C}$ 
	be defined by $\psi([z_0,z_1])=z_0z_1^{-1}.$ Then the pair $(U,\psi)$ is a complex chart for $\mathbb{P}_1(c)$.
	If we identify the complex manifolds $\mathbb{P}_{1}(c)^{\circ}$ and $\mathbb{C}^{*}=\mathbb{C}-\{0\}$ 
	via the map $\psi,$ then $\tau_{c}$ is just the map 
	$\mathbb{C}\to \mathbb{C}^{*}$, $z\mapsto e^{z}$, which shows that $\tau_{c}$ is holomorphic and a covering map, 
	whose Deck transformation group is $\{\varphi_{k}\,\,\vert\,\,
	k\in \mathbb{Z}\}$ (see Lemma \ref{ndkenknfedkwnk}). It remains to  show that $\tau_{c}$ is symplectic, that 
	is, $\tau_{c}^{*}(\tfrac{4}{c}\omega_{FS})=\omega_{c}$. Write $\tau_{c}=\pi_{H}\circ f$, where $\pi_{H}$ is the 
	Hopf map and where $f:\mathbb{C}=TM_{c}\longrightarrow \mathbb{S}^3$ is the map defined by $f(z)
	=\left(1 + e^{2\textup{Re}(z)}\right)^{-1/2}(e^z,1)$. Since $\pi_{H}^{*}\omega_{FS}=j^{*}\omega_{\textup{can}}$, 
	we see that 
	\begin{eqnarray*}
		\tau^{\ast}_{c}\omega_{FS}  =  (\pi_{H} \circ f)^{\ast}\omega_{FS} =  (j \circ f)^{\ast}\omega_{\textup{can}}  
		= F^{\ast}\omega_{\textup{can}},
	\end{eqnarray*}
	where $F:=j\circ f:\mathbb{C}\to \mathbb{C}^{2}$. Let $z,u,v\in \mathbb{C}$ be arbitrary. A simple calculation shows that 
         \begin{eqnarray*}
             F_{\ast_z}u  = \dfrac{(ue^z, 0)}{\left(1+ e^ {2\textup{Re}(z)}\right)^{\frac{1}{2}}} 
		 - \dfrac{e^{2\textup{Re}(z)}\textup{Re}(u)}{\left(1 +e^{2\textup{Re}(z)}\right)^{\frac{3}{2}}}(e^z, 1),
         \end{eqnarray*}
        from which it follows that 
	\begin{eqnarray}
		\lefteqn{(F^{*}\omega_{\textup{can}})_{z}(u,v)=
		\textup{Im}\langle F_{*_{z}}u,F_{*_{z}}v\rangle}\nonumber\\
		&=&\dfrac{1}{4\textup{cosh}^{2}(\textup{Re}(z))}\Big[\textup{Re}(u)\textup{Im}(v)-
		\textup{Im}(u)\textup{Re}(v)\Big],\label{nekwkenfknwk}
	\end{eqnarray}
	where we have used the formula $\tfrac{e^{\theta}}{(1+e^{\theta})^{2}}=\tfrac{1}{4\textup{cosh}^{2}(\theta/2)}$, $\theta\in \mathbb{R}$. 
	By inspection of Dombrowski's construction and the fact that 
	$h_{c}(\theta)=\tfrac{1}{c\textup{cosh}^{2}(\theta)}$, we see that the right-hand side of \eqref{nekwkenfknwk} 
	coincides with $(\omega_{4})_{z}(u,v)$. Thus 
	\begin{eqnarray*}
		\tau^{\ast}_{c}\omega_{FS}= F^{*}\omega_{\textup{can}}=\omega_{4}.
	\end{eqnarray*}
	Multiplying by $\lambda>0$, we get $\tau^{\ast}_{c}(\lambda\omega_{FS})=\lambda\omega_{4}=\omega_{4/\lambda}$. Taking 
	$\lambda=\tfrac{4}{c}$, we see that $\tau_{c}^{*}(\tfrac{4}{c}\omega_{FS})=\omega_{c}$, as desired. 
\end{proof}

\begin{proof}[Proof of Lemma \ref{ndkenknfedkwnk} ($c\leq 0$)] First suppose $c<0$. 
	Under the identification of $TM_{c}$ with $\{z\in \mathbb{C}\,\,\vert\,\,\textup{Re}(z)<0\}$, the map $\tau_{c}: TM_c 
	\rightarrow \mathbb{D}(c)^{\circ}=\mathbb{D}(c)-\{0\}$ is given by $\tau_{c}(z)=e^{z}$, which shows that $\tau_{c}$ is a holomorphic 
	covering map whose Deck transformation group is $\{\varphi_{k}\,\,\vert\,\,k\in \mathbb{Z}\}$ (see Lemma \ref{ndkenknfedkwnk}).
	To show that $\tau_{c}$ is symplectic, 
	it is enough to prove that $\tau_{c}$ is a local isometry. A straightforward calculation shows that the 
	pullback of the hyperbolic metric on $\mathbb{D}(c)^{\circ}$ by $\tau_{c}$ at $z=x+iy\in TM_{c}$ is 
	$(c\,\sinh^{2}(x))^{-1}(dx^{2}+dy^{2})$, which coincides with the K\"{a}hler metric $g_{c}$ on $TM_{c}$ associated to $h_{c}$ via 
	Dombrowski's construction. This completes the proof of the case $c<0$. The case $c=0$ is analogous. 
\end{proof}

	Given $c\in \mathbb{R}$, let $L_{c}\subset TM_{c}$ be the parallel lattice generated by 
	the vector field $2\pi\epsilon(c)\tfrac{\partial}{\partial \theta}$. Thus 
	\begin{eqnarray*}
		L_{c}=\big\{2k\pi\epsilon(c)\tfrac{\partial}{\partial \theta}\big \vert_{\theta} \,\,\big\vert\,\, 
		\theta \in M_{c},\,\, k \in \mathbb{Z} \big\}.
	\end{eqnarray*}

\begin{proof}[Proof of Proposition \ref{nekwnkendkwnkn}]
	Use Proposition \ref{nkwdnkkfenknk}, Lemma \ref{ndkenknfedkwnk} and the parallel lattice $L_{c}$. 
\end{proof}

\section{Global aspects}\label{nrfekwnkefnekn}

	The purpose of this section is to prove the following technical result:
\begin{proposition}\label{nfeknkefnknwk}
	Let $(M,h,\nabla)$ and $(M',h',\nabla')$ be toric dually flat manifolds. If $\textup{dim}(M)=\textup{dim}(M')$, then 
	every affine isometric map $f:M\to M'$ is a diffeomorphism. 
\end{proposition}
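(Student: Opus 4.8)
The plan is to use the lifting procedure of Proposition~\ref{ndcknkdnfknnkk} to push the problem up to the torifications, where we can exploit the rigidity of regular K\"ahler manifolds. Since $M$ and $M'$ are toric, they have regular torifications $N$ and $N'$; fix toric factorizations $\pi=\kappa\circ\tau:TM\to M$ and $\pi'=\kappa'\circ\tau':TM'\to M'$. Given an affine isometric map $f:M\to M'$, Proposition~\ref{ndcknkdnfknnkk} produces a K\"ahler immersion $m:N\to N'$ fitting into the commutative diagram \eqref{knkdfnknkndknk}, with $m\circ\Phi_a=\Phi'_{\rho(a)}\circ m$ for a Lie group homomorphism $\rho:\mathbb{T}^n\to\mathbb{T}^n$ with finite kernel. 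Because $\dim(M)=\dim(M')=n$, the immersion $m$ is between K\"ahler manifolds of the \emph{same} dimension $n$, hence $m$ is automatically a local K\"ahler diffeomorphism (an immersion between manifolds of equal dimension is a local diffeomorphism). So $m:N\to N'$ is a local isometry between complete, connected, simply connected Riemannian manifolds.

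The next step is to upgrade $m$ from a local isometry to a global isometric diffeomorphism. A local isometry from a complete Riemannian manifold is a Riemannian covering map (this is the standard fact that a local isometry with complete domain is a covering onto its image, and since $N'$ is connected it is onto); here completeness of $N$ comes from regularity. Then, since $N'$ is simply connected, the covering $m:N\to N'$ must be a single-sheeted covering, i.e. a diffeomorphism. Thus $m$ is a K\"ahler isomorphism $N\xrightarrow{\sim}N'$.

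Finally I descend back to $M$ and $M'$. From the bottom of diagram \eqref{knkdfnknkndknk} we have $f\circ\kappa=\kappa'\circ m$ on $N^{\circ}$, where $\kappa:N^{\circ}\to M$ and $\kappa':(N')^{\circ}\to M'$ are the compatible projections (principal $\mathbb{T}^n$-bundles, in particular surjective submersions). Since $m$ is a diffeomorphism of $N$ onto $N'$, it restricts to a diffeomorphism $N^{\circ}\to (N')^{\circ}$ (the locus where the torus action is free is preserved, because $m$ intertwines $\Phi$ and $\Phi'$ via $\rho$, and $\rho$ having finite kernel forces stabilizers to correspond up to finite groups — one must check $m$ sends the free locus onto the free locus, which follows since $N^\circ$ and $(N')^\circ$ are exactly the images of $\tau$ and $\tau'$ and $m\circ\tau=\tau'\circ f_*$ has image all of $(N')^\circ$). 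Then $f\circ\kappa=\kappa'\circ(m|_{N^{\circ}})$ with $\kappa,\kappa'$ surjective submersions and $m|_{N^\circ}$ a diffeomorphism forces $f$ to be surjective; and if $f(p_1)=f(p_2)$, choosing preimages under $\kappa$ and chasing the diagram (using that $m|_{N^\circ}$ is injective and $\kappa'$ has the $\mathbb{T}^n$-action as its fibers) shows $p_1,p_2$ lie in the same $\mathbb{T}^n$-orbit of $\kappa$, but distinct points of $M$ have disjoint $\kappa$-fibers, so $p_1=p_2$. Being a bijective local diffeomorphism (it is a submersion between equidimensional manifolds, hence a local diffeomorphism), $f$ is a diffeomorphism.

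I expect the main obstacle to be the bookkeeping in the last paragraph: verifying carefully that $m$ maps $N^{\circ}$ \emph{onto} $(N')^{\circ}$ (not just into it) and that the equivariance with a finite-kernel $\rho$ is compatible with matching the free loci, and then extracting injectivity of $f$ from the diagram chase. The topological upgrade of $m$ (local isometry $\Rightarrow$ covering $\Rightarrow$ diffeomorphism, using completeness and simple connectivity) is routine, as is the observation that an immersion in equal dimensions is a local diffeomorphism; the delicate point is purely the orbit-space/submersion diagram chase at the level of $M$, $M'$.
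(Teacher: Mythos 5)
Your strategy is the same as the paper's (lift $f$ to a K\"ahler immersion $m:N\to N'$ via Proposition \ref{ndcknkdnfknnkk}, upgrade $m$ to a K\"ahler isomorphism using equal dimensions, completeness and simple connectedness, then descend through the toric factorizations), and the first two steps are fine. The gap is exactly at the point you flag: the claim that $m$ maps $N^{\circ}$ \emph{onto} $(N')^{\circ}$. Your justification --- that $m\circ\tau=\tau'\circ f_{*}$ has image all of $(N')^{\circ}$ --- is circular. Since $f$ is an isometric immersion between manifolds of equal dimension, each $f_{*_{p}}$ is an isomorphism onto $T_{f(p)}M'$, so the image of $f_{*}$ is $(\pi')^{-1}(f(M))$ and hence the image of $\tau'\circ f_{*}$ is $(\kappa')^{-1}(f(M))$; this equals $(N')^{\circ}$ if and only if $f(M)=M'$, which is precisely the surjectivity of $f$ you are trying to prove. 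Moreover, your injectivity chase tacitly pulls a relation $m(x')=\Phi'_{a}(m(y'))$ back to $x'=\Phi_{b}(y')$ with $\rho(b)=a$, which needs $\rho$ surjective, and the matching of free loci needs $\rho$ injective; neither is established in your write-up (a finite kernel only bounds stabilizers by a finite group, which does not give freeness).

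The missing ingredient, and the way the paper closes it, is to prove that $\rho$ is a group isomorphism before descending. Injectivity: if $\rho(a)=e$, then for $x\in N^{\circ}$ one has $m(\Phi_{a}(x))=\Phi'_{\rho(a)}(m(x))=m(x)$, so $\Phi_{a}(x)=x$ by injectivity of $m$, and freeness on $N^{\circ}$ gives $a=e$. Surjectivity: an injective homomorphism between tori of the same dimension is an immersion, hence has open image, which is also closed by compactness, hence is all of $\mathbb{T}^{n}$ (alternatively, the finite kernel alone already forces the image to be an $n$-dimensional closed, hence open, subgroup). Once $\rho$ is an isomorphism, stabilizers correspond exactly under the equivariant diffeomorphism $m$, i.e. the stabilizer of $m(x)$ is $\rho$ applied to the stabilizer of $x$, so $m$ restricts to a diffeomorphism of $N^{\circ}$ onto $(N')^{\circ}$; with that in hand your diagram chase for injectivity and surjectivity of $f$ (and the final observation that a bijective local isometry in equal dimensions is a diffeomorphism) goes through as in the paper.
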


	The proposition implies, in particular, that a connected open subset $U$ of a toric dually flat manifold 
	$(M,h,\nabla)$ cannot be toric, unless $U=M$. 

	The proof of Proposition \ref{nfeknkefnknwk} 
	is based on the existence of lifts (see Section \ref{ncekndkrnkenknk}) and on elementary results 
	about local diffeomorphisms and Lie group homomorphisms that we now recall:
	\begin{enumerate}[(i)]
	\item Let $(M,g)$ and $(M',g')$ be connected Riemannian manifolds with $M$ complete, and let $f:M\to M'$ be a local isometry. 
		Then $M'$ is complete and $f$ is a Riemannian covering map (see, e.g., \cite{Lee}, Theorem 6.23).
	\item Let $M$ be a connected manifold. If $M$ is simply connected, then every smooth covering of $M$ is a diffeomorphism
		(see, e.g., \cite{Lee}, Corollary A.59).
	\item Let $G$ and $K$ be Lie groups and $\rho:G\to K$ a Lie group homomorphism. If $\rho$ is injective, 
		then $\rho$ is an immersion. (This follows from the global rank theorem and the fact that a Lie group 
		homomorphism has constant rank.)
	\end{enumerate}
	
\begin{proof}[Proof of Proposition \ref{nfeknkefnknwk}]
	Let $\Phi:\mathbb{T}^{n}\times N\to N$ and $\Phi':\mathbb{T}^{n}\times N'\to N'$ be regular torifications of 
	$M$ and $M'$, respectively. Let $\pi=\kappa\circ \tau:TM\to M$ and $\pi':\kappa'\circ \tau':TM'\to M'$ be toric factorizations (see 
	Section \ref{ncekndkrnkenknk}). Since $N$ and $N'$ are regular, Proposition \ref{ndcknkdnfknnkk} implies that $f$ 
	has a lift, say $\widetilde{f}:N\to N'$. By definition, $\widetilde{f}$ is a K\"{a}hler immersion that is equivariant 
	in the sense that there is a Lie group homomorphism $\rho:\mathbb{T}^{n}\to \mathbb{T}^{n}$ 
	such that $\widetilde{f}\circ \Phi_{a}=\Phi'_{\rho(a)}\circ \widetilde{f}$ for all $a\in \mathbb{T}^{n}$.
	We claim that 
	\begin{enumerate}[(1)] 
	\item $\widetilde{f}$ is a diffeomorphism.
	\item $\rho$ is bijective.
	\end{enumerate}
 	Indeed, since $N$ and $N'$ have the same dimension, $\widetilde{f}$ is a local isometry, and so 
	$\widetilde{f}$ is a smooth covering map by (i). Because $N$ is simply connected, 
	$\widetilde{f}$ is also a diffeomorphism by (ii). This shows (1). To see (2), 
	assume that $\rho(a)=e$ for some $a\in \mathbb{T}^{n}$. Let $x\in N^{\circ}$ be arbitrary.  
	Because $\widetilde{f}$ is equivariant, we have 
	\begin{eqnarray*}
		\widetilde{f}(\Phi_{a}(x))=\Phi'_{\rho(a)}(\widetilde{f}(x))=\Phi'_{e}(\widetilde{f}(x))=\widetilde{f}(x),
	\end{eqnarray*}
	which implies $\Phi_{a}(x)=x$, since $\widetilde{f}$ is a diffeomorphism. Thus 
	$a$ is in the stabilizer of $x$, and since $\Phi$ is free at $x$, $a=e$. This shows that $\rho$ is injective.
	Let us show that $\rho$ is surjective. By (iii), $\rho$ is 
	an immersion between manifolds of the same dimension, and so it is a local diffeomorphism. Thus the image of 
	$\rho$ is open in the connected set $\mathbb{T}^{n}$. It is also closed, because $\mathbb{T}^{n}$ is compact. Therefore 
	$\rho(\mathbb{T}^{n})=\mathbb{T}^{n}$, which shows that $\rho$ is surjective and completes the proof of (2). 
	
	It follows from (1) and (2) that $\widetilde{f}$ is a K\"{a}hler isomorphism which is equivariant 
	relative to the Lie group isomorphism $\rho$. This implies, in particular, that $\widetilde{f}$ restricts 
	to a diffeomorphism from $N^{\circ}$ to $(N')^{\circ}$.
	
	Next we show that $f$ is injective. Let $x$ and $y$ be points in $M$ such that $f(x)=f(y)$. 
	By surjectivity of $\kappa:N^{\circ}\to M$, there are points $x'$ and $y'$ in $N^{\circ}$ such that 
	$x=\kappa(x')$ and $y=\kappa(y')$. To show that $x=y$, it suffices to show that $x'$ and $y'$ are in the 
	same $\mathbb{T}^{n}$-orbit, because $\kappa$ is a 
	principal $\mathbb{T}^{n}$-bundle (see Section \ref{ncekndkrnkenknk}). Using the formula 
	$f\circ \kappa=\kappa'\circ \widetilde{f}$, which holds in $N^{\circ}$, and the fact that $\widetilde{f}$ is an 
	equivariant diffeomorphism, we see that 
	\begin{eqnarray*}
		f(x)=f(y) \,\,\,\,\,\,\,&\Rightarrow& \,\,\,\,\,\,\,f(\kappa(x'))=f(\kappa(y'))\\
					&\Rightarrow& \,\,\,\,\,\,\,(\kappa'\circ \widetilde{f})(x')=
					(\kappa'\circ \widetilde{f})(y')\\
					&\Rightarrow& \,\,\,\,\,\,\,\exists\,a\in \mathbb{T}^{n}\,\,:\,\,\,
						\widetilde{f}(x')=\Phi'_{a}(\widetilde{f}(y'))=\widetilde{f}(\Phi_{\rho^{-1}(a)}(y'))\\
					&\Rightarrow& \,\,\,\,\,\,\,\exists\,a\in \mathbb{T}^{n}\,\,:\,\,\,
						x'=\Phi_{\rho^{-1}(a)}(y')\\
					&\Rightarrow& \,\,\,\,\,\,\,\textup{$x'$ and $y'$ are in the same $\mathbb{T}^{n}$-orbit.}
					%\kappa(x')=\kappa(y')\\
					%&\Rightarrow& \,\,\,\,\,\,\,x=y.
					%\Phi'_{\rho(\rho^{-1}(a)}(\widetilde{f}(y'))\\ \label{nfeekwqjwofek}
	\end{eqnarray*}
	Surjectivity of $f$ follows from the formula $\kappa'\circ \widetilde{f}=f\circ \kappa$ and the fact that 
	$\kappa'$ and $\widetilde{f}:N^{\circ}\to (N')^{\circ}$ are surjective.
\end{proof}

\section{One-dimensional toric dually flat spaces}\label{neknkrfnekwnk}

	In this section, we prove the main results of this paper (Theorems \ref{nefknknfeknkn} and \ref{nfeknkfnedkwnk}). 
	Let $(M,h,\nabla)$ be a connected dually flat manifold of dimension 1
	with Hessian scalar curvature $S:M\to \mathbb{R}$. We assume that there is a global 
	coordinate system $x:M\to \mathbb{R}$ which is affine with respect to $\nabla$. Let $I=x(M)\subseteq \mathbb{R}$. 
	We wish to determine conditions under which $(M,h,\nabla)$ is toric.

        In order to simplify the notation, we will denote by $(x,\dot{x})$ the local coordinates on $TM$ associated to 
	$x$ instead of $(q,r)$ (see Section \ref{nkwwnkwnknknfkn}). Thus, 
        $(x,\dot{x})\big(a\tfrac{\partial}{\partial x}\big\vert_{p} \big)=(x(p),a)$, where $p\in M$ and $a\in \mathbb{R}$. 

\begin{lemma}\label{krnkfnkrneknk}
	Suppose that $(M,h,\nabla)$ is toric with regular torification $N$. If the space of K\"{a}hler functions 
	$\mathscr{K}(N)$ separates the points of $N$, then 
	\begin{eqnarray}\label{neknkenfknknk}
		\frac{d}{dx}\big[\Gamma'(x) - \Gamma(x)^{2}\big] =0
	\end{eqnarray}
	for all $x\in I$, where $\Gamma = \Gamma(x)$ is the unique Christoffel symbol associated to $h$ and $\Gamma'(x)=\tfrac{d\Gamma}{dx}(x)$. 
\end{lemma}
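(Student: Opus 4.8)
The plan is to combine the local description of K\"ahler functions on $TM$ given by Proposition~\ref{c kdlfldfkdl} with the structural consequence of the separation hypothesis furnished by Proposition~\ref{nfeknkwneknk}. Throughout I would work in the global coordinates $(x,\dot{x})$ on $TM$, which range over $I\times\mathbb{R}$. Writing $h(x)$ for the component $h(\tfrac{\partial}{\partial x},\tfrac{\partial}{\partial x})$, one has $\Gamma=\tfrac{h'}{2h}$, together with the identities $\partial_{x}\sqrt{h}=\Gamma\sqrt{h}$ and $\partial_{x}^{2}\sqrt{h}=(\Gamma'+\Gamma^{2})\sqrt{h}$, which I will use repeatedly. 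By Proposition~\ref{c kdlfldfkdl} in dimension $1$, a smooth function $f:TM\to\mathbb{R}$ is K\"ahler if and only if
\begin{eqnarray*}
	\dfrac{\partial^{2}f}{\partial x^{2}}-\dfrac{\partial^{2}f}{\partial\dot{x}^{2}} &=& 2\Gamma\,\dfrac{\partial f}{\partial x},\\
	\dfrac{\partial^{2}f}{\partial x\,\partial\dot{x}} &=& \Gamma\,\dfrac{\partial f}{\partial\dot{x}},
\end{eqnarray*}
where $\Gamma=\Gamma(x)$ is regarded as a function of $x$ alone; a one-line integration of this system shows, incidentally, that the K\"ahler functions on $TM$ depending only on $x$ form the two-dimensional space spanned by $1$ and $\mu(x)=\int^{x}h$.

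The first substantive step is to exhibit a K\"ahler function on $TM$ that genuinely involves the fibre variable $\dot{x}$. Suppose, for contradiction, that every $f\in\mathscr{K}(TM)$ depends on $x$ only. Then each translation $\psi_{t}:(x,\dot{x})\mapsto(x,\dot{x}+t)$, $t\in\mathbb{R}$, is a diffeomorphism of $TM$ fixing every K\"ahler function, so the group $H:=\{\varphi\in\textup{Diff}(TM)\mid f\circ\varphi=f\ \textup{for all}\ f\in\mathscr{K}(TM)\}$ contains a subgroup isomorphic to $(\mathbb{R},+)$ and is therefore uncountable. On the other hand, since $(M,h,\nabla)$ is toric with regular torification $N$ and $\mathscr{K}(N)$ separates the points of $N$, Proposition~\ref{nfeknkwneknk} yields $H=\Gamma(\mathscr{L})$, with $\mathscr{L}$ the fundamental lattice, and $\Gamma(\mathscr{L})\cong\mathbb{Z}$ by Section~\ref{nekwnkefkwnkk} --- a contradiction. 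Hence there is $f\in\mathscr{K}(TM)$ with $\partial f/\partial\dot{x}\not\equiv 0$.

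Fixing such an $f$, the second step is a direct computation. From $f_{x\dot{x}}=\Gamma f_{\dot{x}}$, for each fixed $\dot{x}$ the function $x\mapsto f_{\dot{x}}(x,\dot{x})$ solves the linear ODE $\partial_{x}g=\Gamma g$ on the interval $I$, hence $f_{\dot{x}}(x,\dot{x})=\rho(\dot{x})\sqrt{h(x)}$ for a smooth $\rho$; integrating in $\dot{x}$ gives $f(x,\dot{x})=C(\dot{x})\sqrt{h(x)}+D(x)$ with $C,D$ smooth and $C'=\rho\not\equiv 0$. Substituting into $f_{xx}-f_{\dot{x}\dot{x}}=2\Gamma f_{x}$ and using the two identities for $\sqrt{h}$, the terms regroup as
\begin{eqnarray*}
	\sqrt{h(x)}\,\Big[\,C(\dot{x})\big(\Gamma'(x)-\Gamma(x)^{2}\big)-C''(\dot{x})\,\Big]
	+\Big[\,D''(x)-2\Gamma(x)D'(x)\,\Big] &=& 0.
\end{eqnarray*}
Differentiating in $\dot{x}$ kills the second bracket and leaves $C'(\dot{x})\big(\Gamma'(x)-\Gamma(x)^{2}\big)=C'''(\dot{x})$ for all $(x,\dot{x})\in I\times\mathbb{R}$; evaluating at a $\dot{x}$ with $C'(\dot{x})\neq 0$ shows that $\Gamma'-\Gamma^{2}$ is constant on $I$, which gives \eqref{neknkenfknknk}.

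I expect the main obstacle to be the second step above --- producing a K\"ahler function that is not a function of $x$ alone --- since this is exactly the place where the hypothesis that $\mathscr{K}(N)$ separates the points of $N$ enters (through Proposition~\ref{nfeknkwneknk}); without it one only has the inclusion $\Gamma(\mathscr{L})\subseteq H$, which is too weak for the argument. The remaining computation is routine, the only points needing a little care being the smoothness of $C$ and $D$ and the fact that the solutions of the linear ODE are global over $I$, which uses that $M$ (hence $I$) is connected.
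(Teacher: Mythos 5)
Your proof is correct and follows essentially the same route as the paper: the same reduction via Proposition~\ref{c kdlfldfkdl} to the decomposition $f=C(\dot{x})\sqrt{h(x)}+D(x)$, the same differentiation trick isolating $\Gamma'-\Gamma^{2}$, and the same use of Proposition~\ref{nfeknkwneknk} to rule out the vertical translations via the countability of $\Gamma(\mathscr{L})$. The only difference is organizational: you invoke the separation hypothesis first to produce an $f$ with $\partial f/\partial\dot{x}\not\equiv 0$, whereas the paper runs the computation for an arbitrary K\"ahler function and brings in the translation argument at the end.
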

\begin{proof}
	Let $f \in \mathscr{K} (TM)$ be arbitrary. By Propositon \ref{c kdlfldfkdl}, $f=f(x,\dot{x})$ satisfies 
	the following equations: 
	\begin{eqnarray}\label{enjknknkefnk}
	\dfrac{\partial^2f}{\partial x^2} - \dfrac{\partial^2f}{\partial \dot{x}^2}  =  
		2\Gamma \dfrac{\partial f}{\partial x} \,\,\,\,\,\,\,\,\textup{and} \,\,\,\,\,\,\,\,
		\dfrac{\partial^2 f}{\partial x \partial\dot{x}}  =  \Gamma \dfrac{\partial f}{\partial \dot{x}}.
	\end{eqnarray}	
	The second equation is equivalent to $\tfrac{\partial h}{\partial x}=\Gamma h$, where 
	$h=h(x, \dot{x})= \frac{\partial f}{\partial \dot{x}},$ and thus 
	$h(x, \dot{x})= e^{p(x)}\lambda(\dot{x})$, where $\lambda, 
	p: \mathbb{R} \rightarrow \mathbb{R}$ are smooth functions and $p'(x)=\Gamma(x)$ for all $x\in I$ 
	(see, e.g., \cite{Ahmad}, Theorem 1.4.2). 
	Integrating the equation $h= \frac{\partial f}{\partial \dot{x}}$ with respect to $\dot{x}$ yields 
 	   \begin{eqnarray}\label{fuckahlerTE}
    		f(x, \dot{x}) = a(\dot{x})e^{p(x)}+b(x),
	    \end{eqnarray}
	where $a$ is a primitive of $\lambda$ on $\mathbb{R}$ and $b: I \rightarrow \mathbb{R}$ is smooth function.
	It follows from \eqref{fuckahlerTE} that 
	\begin{eqnarray*}
    \left\{
         \begin{array}{l}
             \dfrac{\partial f}{\partial x} = a(\dot{x})e^{p(x)}\Gamma(x)+b'(x), \vspace{.8em} \\
		 \dfrac{\partial^2 f}{\partial x^2} = a(\dot{x})e^{p(x)}\Gamma(x)^{2}+ a(\dot{x})e^{p(x)}\Gamma'(x) + b''(x),  \vspace{.2cm}\\
             \dfrac{\partial f}{\partial \dot{x}} = a'(\dot{x})e^{p(x)}, \vspace{.2cm} \\
		     \dfrac{\partial^2 f}{\partial \dot{x}^2} = a''(\dot{x})e^{p(x)}.      
         \end{array}
     \right.
	\end{eqnarray*}
     (Here we use the notation $a'$ and $a''$ to denote the first and second ordinary derivatives of $a$.) 
	Substituting this into the first equation in \eqref{enjknknkefnk} yields 
	  \begin{eqnarray*}
		  a(\dot{x})\big[\Gamma'(x) - \Gamma(x)^{2}\big] - a''(\dot{x}) = e^{-p(x)}[2\Gamma(x)b'(x) - b''(x)].
	  \end{eqnarray*}

     Differentiating with respect to $\dot{x},$ we get

	  \begin{eqnarray*}
		  a'(\dot{x})\big[\Gamma'(x) - \Gamma(x)^{2}\big] - a'''(\dot{x}) =0.
          \end{eqnarray*}

     Differentiating again with respect to $x,$ we obtain

          \begin{eqnarray}\label{eqA2}
		  a'(\dot{x})\frac{d}{dx}\big[\Gamma'(x) - \Gamma(x)^{2}\big] =0.
          \end{eqnarray}        

	We claim that $\frac{d}{dx}[\Gamma'(x) - \Gamma(x)^{2}] = 0$ for all $x\in I$. 
	To see this, assume for the sake of contradiction that there exists some $x_{0} \in I$ such that 
      \begin{eqnarray*}%\label{eqA3}
	      \dfrac{d}{dx}\bigg|_{x=x_0}[\Gamma'(x) - \Gamma(x)^{2}] \neq 0.
      \end{eqnarray*}
	By \eqref{eqA2}, this implies that $a'(\dot{x})=0$ for all $\dot{x} \in I$ and hence 
	$a(\dot{x})\equiv a$ is a constant function. Thus $f$ is independent of $\dot{x}$. 
	It follows from this that the group $\{\tau_{u}\,\,\big\vert\,\,u\in \mathbb{R}\}$ of all 
	translations $\tau_{u}:TM \rightarrow TM$ defined by $ \tau_{u}(x, \dot{x})= (x, \dot{x}+u)$, 
	is a subgroup of $G:=\{ \phi \in \textup{Diff}(T M)\mid f \circ \phi = f, \; \forall f \in \mathscr{K} (TM) \}$. 
	Because $\mathscr{K}(N)$ separates the points of $N$ by hypothesis, 
	the group $G$ coincides with $\Gamma(\mathscr{L})$, where $\mathscr{L}$ is the fundamental lattice 
	(see Proposition \ref{nfeknkwneknk}). Thus 
	$\{\tau_{u}\,\,\big\vert\,\,u\in \mathbb{R}\}\subseteq \Gamma(\mathscr{L})$. It follows from this 
	that $\Gamma(\mathscr{L})$ is uncountable, 
	which is a contradiction. The result follows.
\end{proof}

\begin{lemma}\label{nenkwnkefnwkn}
	Suppose that $(M,h,\nabla)$ is toric with regular torification $N$, and that the space of K\"{a}hler functions 
	$\mathscr{K}(N)$ separates the points of $N$. Then the Hessian scalar curvature $S$ is constant on $M$. 
\end{lemma}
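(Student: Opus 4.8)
The plan is to combine Lemma~\ref{krnkfnkrneknk}, which under exactly the present hypotheses gives that $\Gamma'(x)-\Gamma(x)^{2}$ is constant on $I=x(M)$, with the explicit one-dimensional formula for the Hessian sectional curvature from Proposition~\ref{nfendwnfeknkdwnkn}, and then to verify by a short computation that $S$ has vanishing $x$-derivative. Since $M$ is connected, $I$ is an interval, so a locally constant function on $I$ is constant, and it suffices to work entirely in the global affine chart $x$.

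First I would record the relation between $\Gamma$ and $g:=h(\tfrac{\partial}{\partial x},\tfrac{\partial}{\partial x})$. The quantity $\Gamma$ appearing in Lemma~\ref{krnkfnkrneknk} (and in Proposition~\ref{c kdlfldfkdl}) is the Christoffel symbol of the Levi–Civita connection of $h$ expressed in the $\nabla$-affine coordinate $x$, so the standard dimension-one formula gives $\Gamma(x)=\tfrac12\,g'(x)/g(x)=\tfrac12\,\tfrac{d}{dx}\ln g(x)$. Differentiating, $(\ln g)''=2\Gamma'$, and hence Proposition~\ref{nfendwnfeknkdwnkn} yields
\[
	S=\frac{1}{2g}\frac{\partial^{2}\ln g}{\partial x^{2}}=\frac{\Gamma'}{g}
\]
as a smooth function on $M$ (recall that in dimension one $S$ is globally well defined by this local expression, and here the chart is global).

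Next I would invoke Lemma~\ref{krnkfnkrneknk} to write $\Gamma'=\Gamma^{2}+k$ for a constant $k$ on $I$. Then $\Gamma''=2\Gamma\Gamma'$ and, from $\Gamma=\tfrac12(\ln g)'$, also $g'=2\Gamma g$; therefore
\[
	\frac{dS}{dx}=\frac{d}{dx}\!\left(\frac{\Gamma'}{g}\right)=\frac{\Gamma'' g-\Gamma' g'}{g^{2}}=\frac{2\Gamma\Gamma' g-\Gamma'\,(2\Gamma g)}{g^{2}}=0
\]
on $I$. Since $I$ is connected, $S$ is constant there, hence constant on $M$, which is the claim.

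The argument is short and the computation routine; the only place where care is needed is the bookkeeping of which connection $\Gamma$ belongs to — it is the Levi–Civita connection of $h$, not $\nabla$ (whose Christoffel symbols vanish in the affine chart) — so that the identity $S=\Gamma'/g$ is legitimate. Once Lemma~\ref{krnkfnkrneknk} is in hand there is no genuine obstacle; the whole content is packaged in that lemma and in Proposition~\ref{nfendwnfeknkdwnkn}.
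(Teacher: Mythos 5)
Your proposal is correct and follows essentially the same route as the paper: identify $\Gamma=\tfrac12(\ln h)'$, deduce $S=\Gamma'/h$ from Proposition \ref{nfendwnfeknkdwnkn}, use Lemma \ref{krnkfnkrneknk} to get $\Gamma''=2\Gamma\Gamma'$, and check directly that $S'=0$ on the connected image of the global affine chart. The only differences are notational (your $g$ is the paper's $h(x)$) and your explicit remark about which connection $\Gamma$ belongs to, which matches the paper's usage.
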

\begin{proof}
	Since $\textup{dim}M=1$, the Christoffel symbol $\Gamma$ associated to $h$ satisfies 
	$\Gamma=\tfrac{1}{2}\tfrac{\partial \ln(h)}{\partial x}$, and thus 
	\begin{eqnarray}\label{nfeknknfekn}
		\Gamma'(x)=\dfrac{1}{2}\dfrac{\partial^{2}\ln(h)}{\partial x^{2}}
	\end{eqnarray}
	for all $x\in I$. By comparing \eqref{nfeknknfekn} with Proposition \ref{nfendwnfeknkdwnkn}, we obtain 
	$S=\tfrac{\Gamma'}{h}$, from which it follows that 
	\begin{eqnarray}\label{nvrkenwknrekwn}
		S'=\dfrac{\partial}{\partial x}\bigg( \dfrac{\Gamma'}{h}\bigg)=\dfrac{\Gamma''h-h'\Gamma'}{h^{2}}. 
	\end{eqnarray}
	By Lemma \ref{krnkfnkrneknk}, $\Gamma''=2\Gamma\Gamma'$, and thus \eqref{nvrkenwknrekwn} can be rewritten as 
	\begin{eqnarray}\label{nvrkenoookwn}
		S' =\dfrac{2\Gamma\Gamma'h-h'\Gamma'}{h^{2}}=\dfrac{2\Gamma'}{h}\bigg(\Gamma -\dfrac{h'}{2h}\bigg). 
	\end{eqnarray}
	Since $\tfrac{h'}{2h}=\tfrac{1}{2}\tfrac{\partial \ln(h)}{\partial x}=\Gamma$, \eqref{nvrkenoookwn} implies that 
	$S'$ is identically zero on $M$. The lemma follows. 
\end{proof}

%	Recall that $F(c)$ denotes a complete and simply connected 1-dimensional complex space form of holomorphic 
%	sectional curvature $c\in \mathbb{R}$. 
%\begin{lemma}\label{nknwknekwnrkn}
%	Suppose that $(M,h,\nabla)$ is toric with regular torification $N$, and that the space of K\"{a}hler functions 
%	$\mathscr{K}(N)$ separates the points of $N$. Then $N$ is K\"{a}hler isomorphic to a complex space form $F(c)$ 
%	for some $c\in \mathbb{R}$. 
%\end{lemma}
%\begin{proof}
%	Since $N$ is complete and simply connected by hypothesis, it suffices to show that 
%	the holomorphic sectional curvature of $N$ is constant. Let $N^{\circ}$ be the set of points $p\in N$ 
%	where the torus action is free. It follows from the slice theorem (see \cite[Corollary B48]{Guillemin}) that 
%	$N^{\circ}$ is connected and dense in $N$, and since $N^{\circ}$ is 
%	locally K\"{a}hler isomorphic to $TM$ by construction, it is enough to show that 
%	the holomorphic sectional curvature of 
%	$TM$ is constant. This follows from Proposition \ref{nefknkekfwnk} and Lemma \ref{nenkwnkefnwkn}.
%\end{proof}

	Throughout the rest of this section, we assume, in addition to the conditions at the beginning of this section, 
	that $(M,h,\nabla)$ is toric with regular torification $\Phi:\mathbb{T}\times N\to N$, 
	and that the space of K\"{a}hler functions $\mathscr{K}(N)$ separates the points of $N$. 
	By Lemma \ref{nenkwnkefnwkn}, the Hessian sectional curvature $S$ is constant on $M$. Thus there is $\lambda\in \mathbb{R}$ such 
	that $S(p)=\lambda$ for all $p\in M$. 

	Let $h(x)=h(\tfrac{\partial}{\partial x},\tfrac{\partial}{\partial x})$ denote the local expression 
	of $h$. Since $S$ is constant, $h(x)$ is one of the functions in Proposition \ref{nkwknkrnkwdnknkn}. 
	Below we give, in each case, a basis for the space $\mathscr{K}(TM)$ of 
	K\"{a}hler functions on $TM$ (see Proposition \ref{c kdlfldfkdl}). 
	Note that, in any case, $\textup{dim}\,\mathscr{K}(TM)\leq 4$, since $\textup{dim}\,(TM)=2$ (see Section \ref{nfeknkfnekndk}). \\

{\footnotesize	
	\begin{center}
	\renewcommand{\arraystretch}{2.1} 
	\begin{tabular}{|l|l|l|}
	\hline
	$h(x)$         &   $S$    &    Basis for $\mathscr{K}(TM)$   \\
	\hline 
	$e^{ax+b}$     & $\lambda=0$ & $1, \; e^{a x}, \; \cos(a\dot{x}/2)e^{ax/2}, \; \sin(a\dot{x}/2)e^{ax/2}$ \\  [0.8em]
	\hline
	$e^{b}$                   & $\lambda=0$ & $1, \; x, \; \dot{x}, \; \dfrac{x^2 + \dot{x}^2}{2}$ \\ [0.8em]
	\hline
	$\dfrac{a}{\lambda \cos^{2}\big(\sqrt{a}x+b\big)}$	 & $\lambda>0$ &  $1, \; \text{tan}(\sqrt{a}\, x+b), \;
									\dfrac{\cosh(\sqrt{a}\,\dot{x})}{\cos(\sqrt{a}\, x+b)}, \; 
									\dfrac{\sinh(\sqrt{a}\, \dot{x})}{\cos(\sqrt{a}\,x+b)}$ 
								     \\[0.8em]
	\hline
	$\dfrac{a}{\lambda \sinh^{2}\big(\sqrt{a}x+b\big)}$	 & $\lambda>0$ &  $1, \; \text{cotanh}(\sqrt{a}\, x+b), \;
								\dfrac{\cos(\sqrt{a}\, \dot{x} + b)}{\sinh(\sqrt{a}\, x + b)}, \; 
								\dfrac{\sin(\sqrt{a} \dot{x} + b)}{\sinh(\sqrt{a} x + b)}$ \\[0.8em]
	\hline
	$\dfrac{1}{\lambda(x+b)^{2}}$                            & $\lambda>0$ & $1, \; \dfrac{1}{x+b}, \; 
								   \dfrac{\dot{x}}{x+ b}, \; 
								    \dfrac{x^2 + \dot{x}^2}{x + b}$  \\[0.8em]
	\hline 
	$\dfrac{-a}{\lambda \cosh^{2}\big(\sqrt{a}x+b\big)}$    & $\lambda<0$ &  $1, \; \text{tanh}(\sqrt{a}\, x+b), 
		  					            \dfrac{\cos(\sqrt{a}\,\dot{x} + b)}{\cosh(\sqrt{a}\, x + b)}, \; 
								    \dfrac{\sin (\sqrt{a}\, \dot{x} + b)}{\cosh(\sqrt{a}\, x + b)}$  \\[0.8em]
	\hline
	\end{tabular}
	\end{center}
}

	\textbf{}\\

	In the table above, $a$ is a nonzero real number satisfying $a>0$ if $\lambda \neq 0$, and $b\in \mathbb{R}$. Regarding 
	hyperbolic functions, we use the notation $\textup{tanh}(x) = \frac{\sinh(x)}{\cosh(x)}$ and 
	$\textup{cotanh}(x) =\frac{\cosh(x)}{\sinh(x)}$. 
	
	If $h(x)$ is either $e^{b}$ or $\tfrac{a}{\lambda \cos^{2}(\sqrt{a}x+b)}$ or $\tfrac{1}{\lambda(x+b)^{2}}$, then 
	a straightforward verification shows that $\mathscr{K}(TM)$ separates the points of $TM$, which contradicts our 
	assumption that $(M,h,\nabla)$ is toric (see Corollary \ref{nekdnwkwndknk}). Therefore we are left with three possibilities:
	$h(x)$ is either  $e^{ax+b}$ ($\lambda=0$) or $\tfrac{a}{\lambda \sinh^{2}(\sqrt{a}x+b)}$ ($\lambda>0$) or 
	$\tfrac{-a}{\lambda \cosh^{2}\ (\sqrt{a}x+b\ )}$ ($\lambda<0$).

%	Recall the $F(c)$ denotes a connected, simply connected and complete 1-dimensional complex space form 
%	(see Section \ref{nceknwkenfknk}).
%\begin{lemma}\label{nfeknkrnfek}
%	$N=F(c)$ (K\"{a}hler isomorphism), where $c=-\lambda$. 
%\end{lemma}
%\begin{proof}
%	Since $N$ is regular by hypothesis, it is complete and simply connected, and thus it suffices to show that 
%	the holomorphic sectional curvature of $N$ is constant. Let $N^{\circ}$ be the set of points $p\in N$ 
%	where the torus action is free. It follows from the slice theorem (see \cite[Corollary B48]{Guillemin}) that 
%	$N^{\circ}$ is connected and dense in $N$, and since $N^{\circ}$ is 
%	locally K\"{a}hler isomorphic to $TM$ by construction, it is enough to show that 
%	the holomorphic sectional curvature of $TM$ is constant. This follows from Proposition 
%	\ref{nefknkekfwnk} and Lemma \ref{nenkwnkefnwkn}.
%\end{proof}
%
%	We emphasize that Lemma \ref{nfeknkrnfek} does not say anything about the torus action $\Phi:\mathbb{T}\times F(c)\to F(c)$. 

	Write $c=-\lambda$, and let $M_{c}$ be the dually flat manifold defined in Section \ref{nceknwkenfknk}. 
	Recall that $M_{c}$ is toric with regular torification $\Phi_{c}:\mathbb{T}\times F(c)\to F(c)$ 
	(see Proposition \ref{nekwnkendkwnkn}). 

	Let $\varphi:M\to M_{c}$ be the map defined by 
	\begin{eqnarray*}
		\varphi(x)= \left\lbrace
			\begin{array}{llll}
				ax+b-\ln(a^{2})          & \textup{if} &  h(x)=e^{ax+b},  & (\lambda=0)  \\[0.5em]
				\varepsilon(\sqrt{a}x+b) & \textup{if} &  h(x)=\tfrac{a}{\lambda \sinh^{2}(\sqrt{a}x+b)}, & (\lambda>0) \\[0.5em]
			        \sqrt{a}x+b              & \textup{if} &  h(x)=\tfrac{-a}{\lambda \cosh^{2}(\sqrt{a}x+b)}, & (\lambda<0) 
			\end{array}
		\right.
	\end{eqnarray*}
	where $x\in I\cong M$, and where $\varepsilon=1$ if $I\subseteq (-\infty,-b/\sqrt{a})$ and $\varepsilon=-1$ 
	if $I\subseteq (-b/\sqrt{a},\infty)$. 
	
	A straightforward verification shows that $\varphi$ is an affine isometric map, and so 
	Proposition \ref{nfeknkefnknwk} implies that 

\begin{lemma}
	The map $\varphi:M\to M_{c}$ is an isomorphism of dually flat manifolds. 
\end{lemma}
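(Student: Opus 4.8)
The plan is to verify directly that $\varphi$ is an affine isometric map between $M$ and $M_c$, and then to invoke Proposition \ref{nfeknkefnknwk} (both $M$ and $M_c$ being toric dually flat manifolds of dimension $1$) to conclude that $\varphi$ is a diffeomorphism, hence an isomorphism of dually flat manifolds.

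First I would check that $\varphi$ is affine with respect to $\nabla$ and the flat connection on $M_c$. Since $x$ is a global affine coordinate on $M$ and $\theta$ is the affine coordinate on $M_c\subseteq\mathbb{R}$, a map $M\to M_c$ is affine precisely when $\theta\circ\varphi$ is an affine function of $x$, i.e.\ of the form $x\mapsto\alpha x+\beta$. This is immediate from the definition of $\varphi$ in each of the three cases ($h(x)=e^{ax+b}$, $\tfrac{a}{\lambda\sinh^2(\sqrt a x+b)}$, $\tfrac{-a}{\lambda\cosh^2(\sqrt a x+b)}$): in each case $\theta\circ\varphi$ is $\pm\sqrt a\,x+(\text{const})$ or $ax+(\text{const})$. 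One also needs to observe that $\varphi$ maps $M$ \emph{into} $M_c$ with the correct codomain: when $\lambda>0$ (so $c=-\lambda<0$ and $M_c=(-\infty,0)$), the sign $\varepsilon$ is chosen exactly so that $\varepsilon(\sqrt a x+b)<0$ on $I$, using that $\sinh^2(\sqrt a x+b)>0$ forces $\sqrt a x+b\neq 0$ on $I$ and $I$ connected forces $\sqrt a x+b$ to have a constant sign; when $\lambda=0$ or $\lambda<0$, $M_c=\mathbb{R}$ and there is nothing to check.

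Next I would check the isometry condition $\varphi^*h_c=h$, which in the one-dimensional coordinate expressions amounts to
\begin{eqnarray*}
	h_c\big(\varphi(x)\big)\cdot\Big(\tfrac{d(\theta\circ\varphi)}{dx}\Big)^{2}=h(x)\qquad\text{for all }x\in I.
\end{eqnarray*}
In the flat case $h(x)=e^{ax+b}$ one has $\theta\circ\varphi=ax+b-\ln(a^2)$, so the left side is $e^{ax+b-\ln(a^2)}\cdot a^2=e^{ax+b}$, matching $h_c(\theta)=e^{\theta}$. In the case $\lambda>0$, $\theta\circ\varphi=\varepsilon(\sqrt a x+b)$ has derivative $\varepsilon\sqrt a$, and $h_c(\theta)=\tfrac{-1}{c\sinh^2\theta}=\tfrac{1}{\lambda\sinh^2\theta}$ (using $c=-\lambda$); since $\sinh^2$ is even, $\sinh^2(\varepsilon(\sqrt a x+b))=\sinh^2(\sqrt a x+b)$, and the left side becomes $\tfrac{1}{\lambda\sinh^2(\sqrt a x+b)}\cdot a=h(x)$. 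The case $\lambda<0$ is identical with $\cosh$ in place of $\sinh$, using $h_c(\theta)=\tfrac{1}{c\cosh^2\theta}=\tfrac{-1}{\lambda\cosh^2\theta}$ and that $\cosh^2$ is even. Thus $\varphi$ is an affine isometric map in all three cases.

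Finally, $M$ is toric with regular torification $N$ by the standing hypothesis of this part of the section, and $M_c$ is toric with regular torification $F(c)$ by Proposition \ref{nekwnkendkwnkn}; both have dimension $1$. Proposition \ref{nfeknkefnknwk} therefore applies to the affine isometric map $\varphi:M\to M_c$ and yields that $\varphi$ is a diffeomorphism. An affine isometric diffeomorphism between dually flat manifolds is, by definition, an isomorphism of dually flat manifolds, which completes the proof. I expect the only subtlety to be the bookkeeping of signs and the correct choice of $\varepsilon$ ensuring $\varphi(M)\subseteq M_c$ in the case $\lambda>0$; the rest is routine substitution into the explicit formulas for $h$ and $h_c$.
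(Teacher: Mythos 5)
Your proposal is correct and follows exactly the paper's argument: the paper likewise checks (without writing out the computation) that $\varphi$ is an affine isometric map and then applies Proposition \ref{nfeknkefnknwk}, using that $M$ is toric by the standing hypothesis and $M_{c}$ is toric by Proposition \ref{nekwnkendkwnkn}. Your case-by-case verification of $h_{c}(\varphi(x))\,(\tfrac{d(\theta\circ\varphi)}{dx})^{2}=h(x)$ and of the sign choice $\varepsilon$ ensuring $\varphi(M)\subseteq M_{c}$ when $\lambda>0$ simply fills in the ``straightforward verification'' the paper leaves to the reader.
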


%	This implies in particular that $I$ is a connected component of $\mathbb{R}\backslash \{-b/\sqrt{a}\}$ 
%	if $h(x)=\tfrac{a}{\lambda \sinh^{2}(\sqrt{a}x+b)}$ ($\lambda>0$), and $I=\mathbb{R}$ otherwise. 

	Summarizing, we have proved the following result.

\begin{theorem}\label{nefknknfeknkn}
	Let $(M,h,\nabla)$ be a 1-dimensional connected dually flat manifold, with global 
	affine coordinate system $x:M\to \mathbb{R}$. Suppose that $M$ is toric and that 
	the space of K\"{a}hler functions on the regular torification $N$ separates the points of $N$.
	Then $M\cong M_{c}$ (isomorphism of dually flat manifolds) for some $c\in \mathbb{R}$. 
\end{theorem}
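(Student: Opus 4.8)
The plan is to assemble the pieces already developed in this section into a short chain of reductions. First I would invoke Lemma \ref{nenkwnkefnwkn}, which under the standing hypotheses (that $M$ is toric, with regular torification $N$, and that $\mathscr{K}(N)$ separates the points of $N$) tells us that the Hessian sectional curvature $S$ is a constant $\lambda\in\mathbb{R}$ on $M$. Then, by Proposition \ref{nkwknkrnkwdnknkn}, the local expression $h(x)=h(\tfrac{\partial}{\partial x},\tfrac{\partial}{\partial x})$ in the global affine coordinate $x$ must be one of the finitely many functions listed there, according to the sign of $\lambda$.

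The next step is to eliminate the ``bad" cases. For this I would use the explicit bases for $\mathscr{K}(TM)$ tabulated above (each verified via Proposition \ref{c kdlfldfkdl}): whenever $h(x)$ equals $e^{b}$, or $\tfrac{a}{\lambda\cos^{2}(\sqrt{a}x+b)}$, or $\tfrac{1}{\lambda(x+b)^{2}}$, a direct inspection of the four basis functions shows that $\mathscr{K}(TM)$ separates the points of $TM$. By Corollary \ref{nekdnwkwndknk} this would force $M$ to be non-toric, contradicting our standing assumption. Hence only three possibilities survive: $h(x)=e^{ax+b}$ with $\lambda=0$; $h(x)=\tfrac{a}{\lambda\sinh^{2}(\sqrt{a}x+b)}$ with $\lambda>0$; or $h(x)=\tfrac{-a}{\lambda\cosh^{2}(\sqrt{a}x+b)}$ with $\lambda<0$ (with $a>0$ when $\lambda\neq0$).

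In each of these three surviving cases I would write $c=-\lambda$ and exhibit an explicit affine change of the coordinate $x$ — precisely the map $\varphi:M\to M_{c}$ displayed above — which identifies $h$ with the metric $h_{c}$ of the model $M_{c}$ from Section \ref{nceknwkenfknk}; note that the affine reparametrizations $x\mapsto ax+b+\text{const}$ preserve flatness of $\nabla$, so $\varphi$ is affine, and a routine substitution into \eqref{nveknkefnkn} confirms $\varphi^{*}h_{c}=h$, so $\varphi$ is an affine isometric map. Since $M$ is toric by hypothesis and $M_{c}$ is toric by Proposition \ref{nekwnkendkwnkn}, and both have dimension $1$, Proposition \ref{nfeknkefnknwk} upgrades the affine isometric map $\varphi$ to a diffeomorphism, hence to an isomorphism of dually flat manifolds. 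This gives $M\cong M_{c}$, as claimed.

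The only genuinely computational obstacle is the case analysis on $\mathscr{K}(TM)$: one must solve the system \eqref{efkekfjkdf} (in dimension $1$, the two scalar PDEs of \eqref{enjknknkefnk}) for each of the seven candidate metrics $h(x)$, verify that the listed functions form a basis of solutions, and then in three of those cases check by hand that the basis already separates points of $TM$ while in the other three it does not. Everything else is bookkeeping: the constancy of $S$ comes for free from Lemma \ref{nenkwnkefnwkn}, the reparametrization $\varphi$ is written down explicitly, and the final upgrade to an isomorphism is an immediate application of Proposition \ref{nfeknkefnknwk}. I therefore expect the proof to be short, with the substance of the argument located entirely in the table of K\"{a}hler functions and the separation check.
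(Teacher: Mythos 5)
Your proposal is correct and follows essentially the same route as the paper: constancy of the Hessian sectional curvature via Lemma \ref{nenkwnkefnwkn}, the classification of metrics in Proposition \ref{nkwknkrnkwdnknkn}, elimination of three cases through the table of K\"{a}hler functions and Corollary \ref{nekdnwkwndknk}, and the explicit affine isometric map $\varphi$ upgraded to an isomorphism by Proposition \ref{nfeknkefnknwk}. The only inessential extra is your remark that in the three surviving cases $\mathscr{K}(TM)$ fails to separate points, which the paper does not need to (and does not) verify directly.
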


\begin{remark}
	If $M\cong M_{c}$, then obviously $\Phi_{c}:\mathbb{T}\times F(c)\to F(c)$ is a regular torification of $M$. 
\end{remark}

	If, in addition to the conditions in Theorem \ref{nefknknfeknkn}, we assume that $M=\mathcal{E}$ is a 1-dimensional 
	exponential family defined over a finite set, then the constant $c$ must be of the form 
	$\tfrac{1}{p}$, where $p\geq 1$ is an integer. This follows from the following 

\begin{theorem}\label{nfeknkfnedkwnk}
	In the notation of Section \ref{infnekfkenkfnk}, let $\mathcal{E}=\mathcal{E}_{C,F}$ be a 1-dimensional exponential 
	family defined over a finite set $\Omega=\{x_{0},...,x_{m}\}$, and let $\Omega_{red}=\{0,1,...,p\}$ denote the sample space 
	on which the reduced exponential family $\mathcal{E}_{red}$ is defined. Suppose that $\mathcal{E}$ is toric and that the space of K\"{a}hler 
	functions on the regular torification $N$ separates the points of $N$. Then 
	\begin{enumerate}[(1)]
	\item $\mathcal{E}_{red}\sim \mathcal{B}(p)$ (equivalence of exponential families). 
	\item $\mathcal{E}\cong M_{1/p}$ (isomorphism of dually flat manifolds). 
	\item $\mathbb{T}\times \mathbb{P}_{1}(\tfrac{1}{p})\to \mathbb{P}_{1}(\tfrac{1}{p})$, 
		$([t],[z_{0},z_{1}])\mapsto [e^{2i\pi  t}z_{0},z_{1}]$ is a regular torification of $\mathcal{E}$.
	\end{enumerate}
	%If any one of the conditions holds, then $\mathbb{T}\times \mathbb{P}_{1}(\tfrac{1}{p})\to \mathbb{P}_{1}(\tfrac{1}{p})$, 
	%$([t],[z_{0},z_{1}])\mapsto [e^{2i\pi  t}z_{0},z_{1}]$ is a regular torification of $\mathcal{E}$. 
\end{theorem}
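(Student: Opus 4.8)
The plan is to combine Theorem \ref{nefknknfeknkn} with the classification of exponential families of constant Hessian sectional curvature (Proposition \ref{nkdnkenkknk}) and the explicit torification of $M_{c}$ from Section \ref{nceknwkenfknk}. First I would observe that the hypotheses of Theorem \ref{nfeknkfnedkwnk} are precisely those of Theorem \ref{nefknknfeknkn} applied to $M=\mathcal{E}$: indeed $\mathcal{E}$ is a $1$-dimensional connected dually flat manifold (being an exponential family), and the natural parameter $\theta$ furnishes a global affine coordinate system $\theta:\mathcal{E}\to\mathbb{R}$ with respect to $\nabla^{(e)}$. Hence Theorem \ref{nefknknfeknkn} gives $\mathcal{E}\cong M_{c}$ for some $c\in\mathbb{R}$, and by Lemma \ref{ncknknkefnknknk} the Hessian sectional curvature of $\mathcal{E}$ is the constant $-c$.

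Next I would feed this into Proposition \ref{nkdnkenkknk}: since the Hessian sectional curvature of $\mathcal{E}$ is constant, that proposition tells us $\mathcal{E}_{red}\sim\mathcal{B}(p)$, where $p+1$ is the cardinality of $\Omega_{red}$, and moreover that the Hessian sectional curvature equals $-\tfrac{1}{p}$. Comparing the two computations of the curvature, $-c=-\tfrac{1}{p}$, so $c=\tfrac{1}{p}$ with $p\geq 1$ an integer. This immediately yields part (1). For part (2), combine $\mathcal{E}\cong M_{c}=M_{1/p}$ from the previous paragraph, or alternatively chain the isomorphisms $\mathcal{E}\cong\mathcal{E}_{red}$ (Remark \ref{nrvkknkwenken}) and $\mathcal{E}_{red}\cong\mathcal{B}(p)$ (Remark \ref{nefkwnkneknk}, since $\mathcal{E}_{red}\sim\mathcal{B}(p)$) together with the Example in Section \ref{nceknwkenfknk} stating that $\mathcal{B}(p)\cong M_{1/p}$; transitivity of isomorphism of dually flat manifolds then gives $\mathcal{E}\cong M_{1/p}$.

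For part (3), I would invoke Proposition \ref{nekwnkendkwnkn}, which says that $F(c)$ equipped with the action $\Phi_{c}$ is a regular torification of $(M_{c},h_{c},\nabla)$. With $c=\tfrac{1}{p}>0$ we have $F(\tfrac{1}{p})=\mathbb{P}_{1}(\tfrac{1}{p})$ and $\Phi_{c}([t],[z_{0},z_{1}])=[e^{2i\pi t}z_{0},z_{1}]$, which is exactly the torus action in the statement. Since $\mathcal{E}\cong M_{1/p}$ as dually flat manifolds, and a torification of a dually flat manifold is also a torification of any manifold isomorphic to it (as noted right after Proposition \ref{nkwdnkkfenknk}), the action $\mathbb{T}\times\mathbb{P}_{1}(\tfrac{1}{p})\to\mathbb{P}_{1}(\tfrac{1}{p})$ is a regular torification of $\mathcal{E}$, completing the proof.

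The argument is essentially a bookkeeping assembly of results already in hand, so there is no serious obstacle; the only point requiring a small amount of care is making sure the various notions of ``equivalence'' line up correctly — equivalence of exponential families (Definition \ref{kwwdefksn}) versus isomorphism of dually flat manifolds versus equivalence of torifications — and in particular that $\mathcal{E}_{red}\sim\mathcal{B}(p)$ upgrades to a dually flat isomorphism via Remark \ref{nefkwnkneknk}, and that one is entitled to transport the regular torification along such isomorphisms. Everything else is a direct citation.
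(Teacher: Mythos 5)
Your proof is correct and follows essentially the same route as the paper: the paper's own proof is precisely the combination of Lemma \ref{nenkwnkefnwkn} (constancy of the Hessian sectional curvature under the toric-plus-separation hypothesis), Proposition \ref{nkdnkenkknk}, Remarks \ref{nefkwnkneknk} and \ref{nrvkknkwenken}, together with Proposition \ref{nekwnkendkwnkn} and the transport of torifications along dually flat isomorphisms. The only cosmetic difference is that you obtain the constancy of the curvature by first invoking the full classification of Theorem \ref{nefknknfeknkn} (itself built on Lemma \ref{nenkwnkefnwkn}), where the paper cites the lemma directly; the logic is otherwise identical.
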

\begin{proof}
	This follows from Proposition \ref{nkdnkenkknk}, Lemma \ref{nenkwnkefnwkn}, Remarks \ref{nefkwnkneknk} and \ref{nrvkknkwenken}. 
\end{proof}

\section*{Acknowledgement}
	The first author was supported by the \textit{Coordena\c{c}\~ao de Aperfeiçoamento de Pessoal de N\'ivel Superior -- 
	Brasil (CAPES) -- Finance Code 001}.

\begin{footnotesize}\bibliography{bibtex}\end{footnotesize}
\end{document}